\def\RR{{\mathbb R}}
\def\mcB{{\mycal B}}
\def\mcF{{\mycal F}}
\def\mcP{{\mycal P}}
\def\eps{{\varepsilon}}
\newtheorem{theorem} {\sc  Theorem\rm} [section]
\newtheorem{lemma} [theorem] {\sc  Lemma\rm}
\newtheorem{prop} [theorem] {\sc  Proposition\rm}
\newtheorem{remark}[theorem]{\sc  Remark\rm}
\newcounter{marnote}
\DeclareFontFamily{OT1}{rsfs}{}
\DeclareFontShape{OT1}{rsfs}{m}{n}{ <-7> rsfs5 <7-10> rsfs7 <10-> rsfs10}{}
\DeclareMathAlphabet{\mycal}{OT1}{rsfs}{m}{n}
\def\dist{{\rm dist}}
\def\tr{{\rm tr}}
\def\mcS{{\mycal{S}}}
\def\be{\begin{equation}}
\def\ee{\end{equation}}
\def\Id{{\rm Id}}
\def\dist{{\rm dist}}
\def\tr{{\rm tr}}
\def\mcS{{\mycal{S}}}
\newcommand{\R}{\mathbb{R}}
\newcommand{\Z}{\mathbb{Z}}
\newcommand{\N}{\mathbb{N}}
\def\be{\begin{equation}}
\def\ee{\end{equation}}
\def\bea#1\eea{\begin{align}#1\end{align}}
\def\non{\nonumber}
\numberwithin{equation}{section}
\renewcommand{\d}{\mathrm{d}}
\renewcommand{\O}{\mathrm{O}}
\newcommand{\SO}{\mathrm{SO}}
\newcommand{\abs}[1]{\left| #1 \right|} 
\newcommand{\norm}[1]{\left\| #1 \right\|}
\newcommand{\csubset}{\subset\!\subset}
\newcommand{\mres}
{\mathbin{\vrule height 1.6ex depth 0pt width 0.13ex\vrule height 0.13ex depth 0pt width 1.3ex}}
\begin{document}
\title{Design of effective bulk potentials\\ for nematic liquid crystals\\ via colloidal homogenisation }

\author{ Giacomo Canevari\thanks{BCAM,  Basque  Center  for  Applied  Mathematics,  Mazarredo  14,  E48009  Bilbao,  Bizkaia,  Spain.
(gcanevari@bcamath.org)}\, and Arghir Zarnescu\thanks{IKERBASQUE, Basque Foundation for Science, Maria Diaz de Haro 3,
48013, Bilbao, Bizkaia, Spain.}\,  \thanks{BCAM,  Basque  Center  for  Applied  Mathematics,  Mazarredo  14,  E48009  Bilbao,  Bizkaia,  Spain.
}\, \thanks{``Simion Stoilow" Institute of the Romanian Academy, 21 Calea Grivi\c{t}ei, 010702 Bucharest, Romania.}} 

\maketitle

\begin{abstract} 
 We consider a Landau-de Gennes model for a suspension of 
 small colloidal inclusions in a nematic host. We impose suitable anchoring
 conditions at the boundary of the inclusions, and we work in the 
 dilute regime --- i.e., the size of the inclusions is much smaller 
 than the typical separation distance between them, so that the total
 volume occupied by the inclusions is small.
 By studying the homogenised limit, and proving rigorous convergence 
 results for local minimisers, we compute the effective free energy 
 for the doped material. In particular, we show that not only 
 the phase transition temperature, but any coefficient of the quartic 
 Landau-de Gennes bulk potential can be tuned,
 by suitably choosing the surface anchoring energy density.
\end{abstract}




\section{Introduction}

Colloids are mixtures in which microscopic-size particles are suspended in an ambient fluid. The host fluid can be a standard, Newtonian, fluid, or a complex fluid, for instance a liquid crystal. Colloids provide an impressive and fascinating number of applications, offering the possibility to use directed self-assembly to realize unusual composite materials with apriori given properties, designable  photonic crystals and nanoparticles transport, to count just a number of recent, exciting applications. 

Some of the most striking applications are those in which the ambient fluid is a liquid crystal, because this allows the colloidal particles to take advantage of the unusual long-range elastic properties of the liquid crystals. The field of liquid crystal colloids is a fast emerging area of research in condensed matter physics, despite being only about two decades old, see \cite{Lavrentovich,Smalyukh}. The mathematical studies are  few \cite{ABL1,ABL2, Zhang,BCD, ferromagnetic, multiscale} and the most recent ones are generally focused on the defect patterns generated by the presence of the colloids. 

Our study in this article is focused on the bulk properties generated in the ambient material  by the presence of the colloidal inclusions. Related mathematical studies in this directions are those of \cite{BCD, ferromagnetic, multiscale}. The mentioned works show that in a dilute  regime the  mixture of colloids and liquid crystal behave like a {\it ``homogeneous"}, new material, with properties that can be  quantitatively parametrised. On the other hand, let us note that  in the physics literature there are a number of works \cite{Reznikov,Fenghua} showing that in such a dilute regime the mixture behaves like a {\it pure} liquid crystal, but with {\it enhanced properties}. The aim of this article is to recover in the simplest mathematical setting this observation from the physics literature, namely to determine under what conditions the homogenised colloidal material can behave as an {\it improved} standard liquid crystal material. We will also study this issue from a design perspective, trying to determine what kind of colloidal particles are necessary in order to obtain from a given ambient liquid crystal material an apriori prescribed enhanced liquid crystal. 

\bigskip
Our matematical setting will be the the simplest Landau-de Gennes variational theory of liquid crystals.  We denote $$\mcS_0:=\{ Q\in \RR^{3\times 3},\, Q=Q^{\mathsf{T}},\,\tr (Q)=0\}$$ and refer to its elements as {\bf $Q$-tensors}. The elements in this set describe the characteristic feature of the material, namely the local orientational ordering  (for more details see \cite{Mottram, BallZarnescu}).  We take   $\Omega\subseteq\R^3$ to be a bounded,  Lipschitz domain that models the ambient fluid, and let~$g\in H^{1/2}(\partial\Omega, \, \mcS_0)$ be a boundary datum.
The functions taking values 
into the set~$\mcS_0$ will model the liquid crystal material. 

\smallskip We take $\mcP\subseteq\R^3$ to model a colloidal particle and aim to study the situation in which the colloids are small (in a suitable sense, to be clarified) and distributed throughout the ambient material, within $\Omega$. To this end we take the set of inclusions to be

\begin{equation} \label{inclusions}
 \mcP_\eps := \bigcup_{i=1}^{N_\eps} \mcP_\eps^i \qquad \textrm{and} 
 \qquad \mcP_\eps^i := x_\eps^i + \eps^\alpha R^i_\eps \mcP,
\end{equation}
where the~$x_\eps^i$'s are points in~$\Omega$, $\alpha>0$ is a fixed
($\eps$-independent) parameter, the~$R^i_\eps$'s are rotation matrices.

The liquid crystal material will then be located only outside the colloidal particles, that is within the set 
$$\Omega_\eps:=\Omega\setminus\mcP_\eps$$

The  material is described through functions $Q:\Omega_\eps\to \mcS_0$ that  minimise the following 
Landau-de Gennes free energy functional:
\begin{equation} \label{Eq:LdG}
 \mcF_\eps[Q] 
 := \int_{\Omega_\eps} \left(f_e(\nabla Q) + f_b(Q)\right)\d x +
 \eps^{3-2\alpha} \int_{\partial\mcP_\eps} f_s(Q, \, \nu) \, \d\sigma.
\end{equation} (where $\nu(x)$ denotes as usually the exterior normal at the point $x$ on the boundary).

In the above and the following we work in a non-dimensional setting (see for a general discussion  on non-dimensionalisation within liquid crystal context \cite{Gartland, NguyenZarnescu} and specifically for liquid crystal colloids \cite{ABL1}).

\bigskip The above energy functional has several terms. The most significant one, is the {\bf bulk potential} $f_b(Q)$ that models {\it the phase transition} from the liquid phase to the nematic phase. Physical invariances require the symmetry assumption $f_b(Q)=f_b(RQR^{\mathsf{T}})$ for any $Q\in\mcS_0$ and $R\in \mathrm{O}(3)$, and the most commonly used form is the Landau-de Gennes potential, up to fourth order, given as:
\begin{equation} \label{Eq:bulkLdG}
f_{LDG}(Q):= a \, \textrm{tr}(Q^2) - b \, \textrm{tr}(Q^3) 
+ c\left(\textrm{tr}(Q^2)\right)^2
\end{equation}
Here $a$, $b$, $c\in\R$ are material constants, with $a$ being proportional with the temperature. For $a$ large enough the global minimizer of such a potential is the zero matrix (which models the isotropic state) while for $a$ small enough the global minimizers are matrices of the form $\{s_+(n\otimes n-\frac{1}{3}\Id), n\in \mathbb{S}^2\}$ with $s_+$ an explicitly computable constant, depending on $a$, $b$, $c$ (see for instance \cite{Mottram} for more details). 
%
%

\smallskip The term $f_e(\nabla Q)$ models {\it the spatial variations} of the material. Physical invariances require the symmetry $f_e(D)=f_e(D^*)$ where we denote the third order tensor $D_{ijk}:=\frac{\partial Q_{ij}}{\partial x_k}$ and we have $D^*_{ijk}=R_{il}R_{jm}R_{kp}D_{lmp}$. Some terms satisfying these invariances are (where we denote $Q_{ij,k}:=\frac{\partial Q_{ij}}{\partial x_k}$ and assume summation over repeated indices):
\[
f_e^1:=Q_{ij,k}Q_{ij,k}, \quad f_e^2:=Q_{ij,k}Q_{ik,j},
\quad f_e^3=Q_{ij,j}Q_{ik,k}
\]
The most commonly used one is the first one above, that provides a reasonably good approximation in many cases of interest.
 
\smallskip Finally, {\it the effects induced by the particles} are modeled through the surface energy term that encodes the effect produced by the interaction between the boundary of the colloidal particles and the ambient fluid. The physical invariances require the following
 
\begin{equation} \label{invariance--}
  f_s(UQU^{\mathsf{T}}, \, Uu) = f_s(Q, \, u) \qquad 
  \textrm{for any } (Q, \, u)\in\mcS_0\times\R^3, \ U\in\O(3).
\end{equation}

The most commonly used surface energy is the so-called Rapini-Papoular type energy (see for instance \cite{Ravnik}), of the form:
\be
f_s(Q,\nu)=W \, \tr\left(Q-s_+\left(\nu\otimes \nu-\frac{1}{3}\Id\right)\right)^2
\ee
with $W>0$ a coefficient measuring the strength of the anchoring and the overall term measuring the deviation from the homeotropic (perpendicular) anchoring on the boundary.

\bigskip Taking the above into account, we will show that in the limit of $\eps\to 0$ the homogenized material will behave as if described by a limiting, homogenized, energy functional of the form 

\begin{equation} \label{Eq:homLdG-}
 \mcF_0[Q] 
 := \int_{\Omega_\eps} \left(f_e(\nabla Q) + f_b(Q) + f_{hom}(Q, \, x)\right)\d x.
\end{equation}

\bigskip As previously mentioned, our aim is to understand to what extent $f_b(Q) + f_{hom}(Q, \, x)$ can be {\it designed} to be an improvement of the original $f_b(Q)$. The control that we will allow is in suitably modifying the surface energy term, that models the effect of the colloids. We will develop a rather general machinery, that in particular will allow to show that given an arbitrary bulk term  $f_{LDG}$ as above, characterized by the constants $a,b,c$ we will be able to obtain the homogenized $f_{LDG}(Q)+f_{hom}(Q, \, x)$ to be of the Landau-de Gennes type, with constants $a',b'$ and $c'$, for arbitrary parameters $(a,b,c)$, respectively $(a',b',c')$. The homogenised bulk potential will be obtained  by suitably adjusting our {\it ``design parameter''}, the surface energy term $f_s$, depending on $a,b,c,a',b',c'$.

\bigskip The paper is organised as follows: 
in Section~\ref{Sec:main} we will present the technical assumptions and the main results. The proofs of the main results are divided between Section~\ref{Sec:prop}, where we study the properties of the functionals $\mcF_\eps$ for fixed~$\eps$, and Section~\ref{Sec:conv} where we provide the convergence of minimisers results. The applications to the standard types of Landau-de Gennes potentials, mentioned above, are presented at the end, in Section~\ref{Sec:LDG}.

\section{Technical assumptions and the main results}
\label{Sec:main}

Throughout the paper, we will denote by~$C$ a generic constant,
whose value may change from line to line, depending only on the domain,
the boundary datum and the free energy functional~\eqref{Eq:LdG},
but not on~$\eps$.
We will also write $A\lesssim B$ as a short-hand for~$A\leq C B$.

The target space of our maps is the set of $Q$-tensors,
that is, the set of symmetric, trace-free, real
$(3\times 3)$-matrices, which we denote~$\mcS_0$. For~$Q\in\mcS_0$, we let
$\abs{Q} := (\tr(Q^2))^{1/2} = (\sum_{i,j} Q_{ij})^{1/2}$; 
this defines a norm on the linear space~$\mcS_0$,
and this norm is induced by a scalar product.

\paragraph*{Assumptions.}

Let~$\Omega\subseteq\R^3$ be a bounded, smooth domain.
Let us consider a closed set $\mcP_\eps\subseteq\Omega$
of the form~\eqref{inclusions}, where~$\mcP$, 
$\alpha$, $x^i_\eps$ and~$R^i_\eps$ satisfy the assumptions below.

\begin{enumerate}[label=(H\textsubscript{\arabic*}), ref=H\textsubscript{\arabic*}]
 \item \label{hp:alpha} \label{hp:first} $1 < \alpha < 3/2$.
 
 \item \label{hp:Omega_eps}
 There exists a constant $\lambda_\Omega>0$ such that
 \[
  \dist(x_\eps^i, \, \partial\Omega) + 
  \frac{1}{2} \inf_{j\neq i} |x_\eps^j - x_\eps^i| \geq \lambda_\Omega\eps
 \]
 for any~$\eps>0$ and any~$i=1, \, \ldots, \, N_\eps$.
 
 \item \label{hp:conv} As~$\eps\to 0$, the measures
 $\mu_\eps := \eps^3\sum_{i=1}^{N_\eps}\delta_{x_\eps^i}$ 
 converge weakly$^*$ (as measures in~$\R^3$) 
 to $\d x\mres\Omega$, where~$\d x\mres\Omega$ 
 denotes the Lebesgue measure restricted to~$\Omega$.
 
 \item \label{hp:R} There exists a continuous 
 map~$R_*\colon\overline{\Omega}\to\SO(3)$
 such that $R_\eps^i = R_*(x_\eps^i)$ for any~$\eps>0$ 
 and any~$i=1, \, \ldots, \, N_\eps$.
 
 \item \label{hp:P} $\mcP\subseteq\R^3$ is a compact, convex set 
 that contains the origin. Moreover, there exists a 
 bijective, Lipschitz
 map~$\phi\colon\overline{B}_1\subseteq\R^3\to\mcP$,
 with Lipschitz inverse, such that $\phi(0) = 0$.
\end{enumerate}

Thanks to~\eqref{hp:alpha} and~\eqref{hp:Omega_eps},
the $\mcP_\eps^i$'s are pairwise disjoint when $\eps$ is small enough.
Under the assumption~\eqref{hp:Omega_eps},
the balls $B(x_\eps^i, \, \lambda_\Omega\eps)$
are contained in~$\Omega$ and pairwise disjoint; therefore, the number of
the inclusions is $N_\eps\lesssim\eps^{-3}$. Moreover, the total volume of
the inclusions converges to zero as~$\eps\to 0$, because
$|\mcP_\eps|\lesssim N_\eps \eps^{3\alpha} \lesssim \eps^{3(\alpha - 1)}$ and
$\alpha >1$ by assumption~\eqref{hp:alpha}. Thus, we are in the diluted
regime, as in \cite{ferromagnetic}. The assumption~\eqref{hp:conv}
guarantees that the inclusions are uniformly distributed,
at least for small values of~$\eps$.
Both the conditions~\eqref{hp:Omega_eps} and~\eqref{hp:conv}
are satisfied if, for instance,
the points $x^i_\eps$ are periodically distributed,
that is, if we choose
\[
 \{x_\eps^i\colon i=1, \, \ldots, \, N_\eps\}
 = \left\{y\in\Omega\colon \dist(y, \, \partial\Omega)\geq\eps
 \textrm{ and } y_k/\eps\in\Z \textrm{ for } k=1, \, 2, \, 3\right\} \! .
\]
The condition~\eqref{hp:R} guarantees that the orientation
of the inclusions is varying continuously across~$\overline{\Omega}$.
Finally, \eqref{hp:P} is compatible with a large class of 
shapes for the inclusions; for instance, \eqref{hp:P} is satisfied if $\mcP$ is
a sphere, an ellipsoid, a cube (or more generally, a convex polyhedron)
or a cylinder, with barycentre at the origin. Under the assumption~\eqref{hp:P},
the domain~$\Omega_\eps:= \Omega\setminus\mcP_\eps$ has Lipschitz boundary.

We consider the functional~$\mcF_\eps$, 
defined by~\eqref{Eq:LdG}. We assume the following conditions
on the energy densities~$f_e$, $f_b$, $f_s$.
We say that a function $f\colon\mcS_0\otimes\R^3\to\R$ 
is strongly convex if there exists~$\theta>0$ such that 
the function $\mcS_0\otimes\R^3\ni D \mapsto f(D) - \theta|D|^2$
is convex. 

\begin{enumerate}[label=(H\textsubscript{\arabic*}), ref=H\textsubscript{\arabic*}, resume]
 \item \label{hp:fe}
 $f_e\colon\mcS_0\otimes\R^3\to[0, \, +\infty)$ 
 is differentiable and strongly convex. Moreover, 
 there exists a constant~$\lambda_e>0$ such that 
 \[
  \lambda_e^{-1}|D|^2 \leq f_e(D) \leq \lambda_e|D|^2, \qquad
  |(\nabla f_e)(D)| \leq \lambda_e\left(|D| + 1\right)
 \]
 for any~$D\in\mcS_0\otimes\R^3$.
 
 \item \label{hp:fb} $f_b\colon\mcS_0\to\R$ is continuous, bounded from below 
 and there exists a positive constant~$\lambda_b$ such that
 $|f_b(Q)|\leq \lambda_b(|Q|^6 + 1)$ for any~$Q\in\mcS_0$.
 
 \item \label{hp:fs} \label{hp:last} $f_s\colon\mcS_0\times
 \mathbb{S}^2\to\R$ is continuous and there exists a
 constant~$\lambda_s>0$ such that, for any $Q_1$, 
 $Q_2\in\mcS_0$ and~$\nu\in\mathbb{S}^2$, there holds
 \[
  |f_s(Q_1, \, \nu) - f_s(Q_2, \, \nu)|\leq
  \lambda_s(|Q_1|^3 + |Q_2|^3 + 1) |Q_1 - Q_2|.
 \]
\end{enumerate}

As a consequence of~\eqref{hp:fs}, 
the surface energy density~$f_s$ has quartic growth in~$Q$.
Therefore, under the assumptions~\eqref{hp:fe}--\eqref{hp:fs},
the functional $\mcF_\eps$ is well-defined and finite 
on $H^1(\Omega_\eps, \mcS_0)$, thanks to the Sobolev embeddings
$H^1(\Omega_\eps)\hookrightarrow L^6(\Omega_\eps)$,
$H^{1/2}(\partial\Omega_\eps)\hookrightarrow L^4(\partial\Omega_\eps)$.

\paragraph*{The homogenised functional.}

Let us define the function~$f_{hom}\colon\mcS_0\times\overline{\Omega}\to\R$ by
\begin{equation} \label{f_hom}
 f_{hom}(Q, \, x) := \int_{\partial\mcP} 
 f_s(Q, \, R_*(x)\nu_{\mcP}) \, \d\sigma
\end{equation}
for any~$(Q, \, x)\in\mcS_0\times\overline{\Omega}$,
where~$\nu_{\mcP}$ denotes the 
\emph{inward}-pointing unit normal to~$\partial\mcP$,
and~$R_*\colon\overline{\Omega}\to\SO(3)$ is the map
given by~\eqref{hp:R}. Under the assumptions~\eqref{hp:R} and~\eqref{hp:fs},
it is easily checked that $f_{hom}$ is continuous 
on~$\mcS_0\times\overline{\Omega}$ and has quartic growth in~$Q$.
Our candidate homogenised functional is defined
for any $Q\in H^1(\Omega, \mcS_0)$ as
\begin{equation} \label{Eq:homLdG}
 \mcF_0[Q] 
 := \int_{\Omega_\eps} \left(f_e(\nabla Q) 
 + f_b(Q) + f_{hom}(Q, \, x)\right)\d x.
\end{equation}

\paragraph*{Main results.}

The main result of these notes concerns the asymptotic behaviour,
as~$\eps\to 0$, of \emph{local} minimisers of the functional~$\mcF_\eps$.

Let~$g\in H^{1/2}(\partial\Omega, \, \mcS_0)$ be a boundary datum.
By a slight abuse of notation, we denote by~$H^1_g(\Omega_\eps, \mcS_0)$ ---
respectively, $H^1_g(\Omega, \, \mcS_0)$ ---
the set of maps~$Q\in H^1(\Omega_\eps, \mcS_0)$
--- respectively, $Q\in H^1(\Omega, \mcS_0)$ --- that satisfy~$Q = g$
on~$\partial\Omega$, in the sense of traces.
(We do not prescribe a boundary value for~$Q$ 
on~$\partial\Omega_\eps\setminus\partial\Omega = \partial\mcP_\eps$.)
We let $E_\eps\colon H^1_g(\Omega_\eps, \mcS_0)\to H^1_g(\Omega, \mcS_0)$ 
denote the harmonic extension operator, defined in the following
way: for~$Q\in H^1(\Omega_\eps, \mcS_0)$,
$E_\eps Q := Q$ on~$\Omega_\eps$ and, for each~$i$,
$E_\eps Q$ on~$\mcP_\eps^i$ is the unique solution of the problem
\[
 \begin{cases}
  -\Delta (E_\eps Q) = 0 & \textrm{in } \mcP_\eps^i \\
  E_\eps Q = Q           & \textrm{on } \partial\mcP_\eps^i.
 \end{cases}
\]

\begin{theorem}  \label{th:loc-min}
 Suppose that the assumptions \eqref{hp:first}--\eqref{hp:last}
 are satisfied. Suppose, moreover, that $Q_0\in H^1_g(\Omega, \mcS_0)$
 is an isolated $H^1$-local minimiser for 
 $\mcF_0$ --- that is, there exists $\delta_0>0$ such that
 \begin{equation*} 
  \mcF_0[Q_0] < \mcF_0[Q]
 \end{equation*}
 for any~$Q\in H^1_g(\Omega, \mcS_0)$ such that $Q\neq Q_0$ and
 $\|Q - Q_0\|_{H^1(\Omega)}\leq\delta_0$. Then, for any~$\varepsilon$
 small enough, there exists an $H^1$-local minimiser $Q_\eps$ for~$\mcF_\eps$
 such that $E_\eps Q_\eps\to Q_0$ strongly in~$H^1(\Omega)$ as~$\eps\to 0$.
\end{theorem}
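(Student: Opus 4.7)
The plan is to obtain $Q_\eps$ as a minimiser of $\mcF_\eps$ over a closed $H^1$-ball around $Q_0$, then to show that $E_\eps Q_\eps\to Q_0$ strongly in $H^1(\Omega)$, so that the ball constraint becomes inactive and $Q_\eps$ is automatically an unconstrained local minimiser of $\mcF_\eps$. Concretely, fix $\delta\in(0,\delta_0)$ and set
\[
 \mcA_\eps := \bigl\{Q\in H^1_g(\Omega_\eps, \, \mcS_0):\; \|E_\eps Q - Q_0\|_{H^1(\Omega)}\leq\delta\bigr\}.
\]
Since $E_\eps\colon H^1(\Omega_\eps)\to H^1(\Omega)$ is bounded and linear, $\mcA_\eps$ is weakly closed; $\mcF_\eps$ is coercive on $\mcA_\eps$ by the lower bounds in~\eqref{hp:fe}--\eqref{hp:fb}, and weakly lower semicontinuous (for fixed $\eps$) by convexity of $f_e$ together with the compact Sobolev embeddings $H^1(\Omega_\eps)\hookrightarrow L^p(\Omega_\eps)$ for $p<6$ and $H^{1/2}(\partial\Omega_\eps)\hookrightarrow L^q(\partial\Omega_\eps)$ for $q<4$, used to handle the growths of $f_b$ and $f_s$ allowed by~\eqref{hp:fb}--\eqref{hp:fs}. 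The direct method then yields a constrained minimiser $Q_\eps\in\mcA_\eps$.

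Next I would compare $\mcF_\eps[Q_\eps]$ with $\mcF_\eps$ evaluated on the canonical competitor $\tilde Q_\eps := Q_0|_{\Omega_\eps}$. Using the convergence results developed in Section~\ref{Sec:conv}, together with the equidistribution hypothesis~\eqref{hp:conv} for the surface term, one verifies both $E_\eps\tilde Q_\eps\to Q_0$ strongly in $H^1(\Omega)$ and $\mcF_\eps[\tilde Q_\eps]\to\mcF_0[Q_0]$; hence $\tilde Q_\eps\in\mcA_\eps$ for $\eps$ small, and $\limsup_\eps\mcF_\eps[Q_\eps]\leq\mcF_0[Q_0]$. The constraint also bounds $E_\eps Q_\eps$ in $H^1(\Omega)$, so a subsequence converges weakly to some $Q_*\in H^1_g(\Omega,\mcS_0)$ with $\|Q_*-Q_0\|_{H^1(\Omega)}\leq\delta<\delta_0$; the liminf inequality of Section~\ref{Sec:conv} gives $\mcF_0[Q_*]\leq\liminf_\eps\mcF_\eps[Q_\eps]\leq\mcF_0[Q_0]$. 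The isolated minimality of $Q_0$ in the $\delta_0$-ball then forces $Q_*=Q_0$, and the weak convergence extends to the whole sequence.

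To upgrade to strong $H^1$-convergence I would exploit the strong convexity of $f_e$ from~\eqref{hp:fe} in the form
\[
 \theta\int_\Omega|\nabla E_\eps Q_\eps - \nabla Q_0|^2\,\d x \leq \int_\Omega\!\bigl(f_e(\nabla E_\eps Q_\eps) - f_e(\nabla Q_0) - \langle\nabla f_e(\nabla Q_0),\,\nabla E_\eps Q_\eps - \nabla Q_0\rangle\bigr)\d x.
\]
The cross term vanishes by weak $L^2$-convergence of $\nabla E_\eps Q_\eps$; the $f_e$-difference vanishes by combining the energy convergence $\mcF_\eps[Q_\eps]\to\mcF_0[Q_0]$, the convergence of the bulk and surface contributions from Section~\ref{Sec:conv}, and an asymptotically negligible contribution from the harmonic extension on $\mcP_\eps$ (its Dirichlet energy on each $\mcP_\eps^i$ is minimal among extensions with the same trace, and $|\mcP_\eps|\lesssim\eps^{3(\alpha-1)}\to 0$ by~\eqref{hp:alpha}). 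Strong $H^1$-convergence then gives $\|E_\eps Q_\eps - Q_0\|_{H^1(\Omega)}<\delta$ for $\eps$ small, so the constraint is strictly inactive; since $E_\eps$ is bounded on $H^1(\Omega_\eps)$, any $H^1(\Omega_\eps)$-small perturbation of $Q_\eps$ still lies in $\mcA_\eps$, making $Q_\eps$ an unconstrained local minimiser of $\mcF_\eps$. I expect the main difficulty to lie in this last step: carefully handling the harmonic extension across the vanishing inclusions while converting the convergence of energies plus weak $H^1$-convergence into strong convergence on the perforated domain.
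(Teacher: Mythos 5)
Your overall architecture is the same as the paper's (minimise over a closed $H^1$-ball around $Q_0$ after extension, compare with the competitor $Q_0|_{\Omega_\eps}$, use the liminf inequality and isolated minimality to identify the weak limit, then upgrade to strong convergence via the strong convexity of $f_e$ so that the constraint becomes inactive). However, there is a genuine gap in your very first step, the existence of the constrained minimiser. You claim that, for fixed $\eps$, $\mcF_\eps$ is weakly sequentially lower semicontinuous because of the compact embeddings $H^1(\Omega_\eps)\hookrightarrow L^p$, $p<6$, and $H^{1/2}(\partial\Omega_\eps)\hookrightarrow L^q$, $q<4$. This is insufficient: Assumption~\eqref{hp:fs} allows $f_s$ to have exactly quartic growth in $Q$, which is \emph{critical} for the trace embedding (the embedding into $L^4(\partial\mcP_\eps)$ is not compact), and the surface term carries no sign. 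In fact the paper's Lemma~\ref{lem:not_lsc} exhibits, for every fixed $\eps>0$, a bounded sequence along which weak lower semicontinuity fails. The correct replacement is Proposition~\ref{prop:lsc}: on sets where $\|\nabla Q_j\|_{L^2(\Omega_\eps)}\leq M$ (which your ball $\mcA_\eps$ does provide), one splits $\partial\mcP_\eps$ according to whether $|Q_j-Q|\leq |Q|+1$ or not, controls the bad part by $\int_{\partial\mcP_\eps}|Q_j-Q|^4$ via the trace inequality of Lemma~\ref{lem:trace_eps}, and absorbs the resulting defect term $C_M\eps^{3-2\alpha}\mu$ into the gain $\theta\mu$ coming from the strong convexity of $f_e$; this works only for $\eps\leq\eps_0(M)$, which is consistent with the theorem's ``$\eps$ small enough'' but not with lower semicontinuity ``for fixed $\eps$'' as you assert.

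A secondary, smaller point: in the strong-convergence step, the statement that the harmonic extension's contribution is negligible because its Dirichlet energy is ``minimal among extensions with the same trace'' and $|\mcP_\eps|\to 0$ does not by itself give smallness. What is needed is the quantitative bound of Lemma~\ref{lem:extension}, $\|\nabla(E_\eps Q_\eps)\|_{L^2(\mcP_\eps)}\lesssim\|\nabla Q_\eps\|_{L^2(\widetilde\mcP_\eps\setminus\mcP_\eps)}$, combined with the equi-integrability of $|\nabla Q_\eps|^2\chi_{\Omega_\eps}$, which is available only \emph{after} you have shown $\int_{\Omega_\eps}|\nabla Q_\eps|^2\to\int_\Omega|\nabla Q_0|^2$ (so that $\chi_{\Omega_\eps}\nabla(E_\eps Q_\eps)$ is strongly compact in $L^2$), exactly as in the paper's Proposition~\ref{prop:convergence}. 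With these two repairs your proof coincides with the paper's.
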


\begin{remark}
 Theorem~\ref{th:loc-min} applies, in particular, to any critical point~$Q_0$
 of~$\mcF_0$ that is locally (strictly) stable, that is, satisfies
 \[
  \frac{\d^2}{\d t^2}_{|t=0} \mcF_0[Q_0 + t P]>0
 \]
 for any~$P\in H^1_0(\Omega, \mcS_0)$ with~$P\not\equiv 0$.
\end{remark}

\begin{remark} \label{remark:trade-off}
 There is a trade-off between the growth of the
 surface energy density, $f_s$, and the parameter~$\alpha$.
 If~$f_s$ is allowed to be
 a function of quartic growth in~$Q$, as in Assumption~\eqref{hp:fs}, then
 Theorem~\ref{th:loc-min} cannot hold when $\alpha > 3/2$
 (see Lemma~\ref{lem:unbounded3} below).
 Other regimes for the parameter $\alpha$ can be considered, if
 we impose different growth conditions on~$f_s$; 
 an example where~$f_s$ is a quadratic function and~$1 < \alpha < 3$
 was studied in~\cite{ferromagnetic}.
\end{remark}

We cannot study the asymptotic behaviour of global minimisers of~$\mcF_\eps$
because, under the assumptions~\eqref{hp:first}--\eqref{hp:last},
the functional~$\mcF_\eps$ might
be unbounded from below (see Lemma~\ref{lem:unbounded} below). 
However, we can provide a statement about global
minimisers of~$\mcF_\eps$ under stronger assumptions.

\begin{theorem}  \label{th:global-min}
 Suppose that the assumptions \eqref{hp:first}--\eqref{hp:last}
 are satisfied. In addition, suppose that there exist positive 
 constants~$\mu$ and $C$ such that
 \[
  f_b(Q) \geq \mu |Q|^6 - C \qquad 
  \textrm{for any } Q\in\mcS_0.
 \]
 Then, for~$\varepsilon$ small enough, there exists a global 
 minimiser $Q_\eps$ for~$\mcF_\eps$ in $H^1_g(\Omega_\eps, \mcS_0)$.
 Moreover, up to a (non-relabelled) subsequence, $E_\eps Q_\eps$ converges 
 strongly in~$H^1(\Omega)$ to a global minimiser 
 for~$\mcF_0$ in~$H^1_g(\Omega, \mcS_0)$.
\end{theorem}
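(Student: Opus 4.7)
The strategy is to apply the direct method at each fixed $\eps$ to produce minimisers $Q_\eps$, and then to leverage the $\Gamma$-convergence-type statements that will be established in Section~\ref{Sec:conv} to show that (a suitable subsequence of) the harmonic extensions $E_\eps Q_\eps$ converge to a global minimiser of $\mcF_0$. The added coercivity $f_b(Q)\geq \mu |Q|^6 - C$ is the key new ingredient: it restores compactness of minimising sequences, which was not available under the weaker growth assumption~\eqref{hp:fb} alone (cf. the discussion around Lemma~\ref{lem:unbounded} on potential unboundedness from below).

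For existence at fixed $\eps$, I would take a minimising sequence $(Q_n)\subset H^1_g(\Omega_\eps,\mcS_0)$. Combining the lower bound $f_e(D)\geq \lambda_e^{-1}|D|^2$ from~\eqref{hp:fe} with the sixth-order coercivity of $f_b$, one obtains a uniform bound on $\|Q_n\|_{H^1(\Omega_\eps)}+\|Q_n\|_{L^6(\Omega_\eps)}$; combined with the trace $Q_n|_{\partial\Omega}=g$ and the Sobolev embedding $H^{1/2}(\partial\Omega_\eps)\hookrightarrow L^4(\partial\Omega_\eps)$, this also controls the surface term, which has quartic growth by~\eqref{hp:fs}. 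Passing to a weakly convergent subsequence and using strong convexity of $f_e$ to obtain weak lower semicontinuity of $\int f_e(\nabla Q)$, continuity of $f_b$ combined with strong $L^p$ convergence for $p<6$, and strong $L^4$ trace convergence on $\partial\mcP_\eps$, one concludes that the weak limit $Q_\eps$ is a global minimiser.

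Next I would bound $\mcF_\eps[Q_\eps]$ uniformly in $\eps$. Testing with a fixed smooth extension $\tilde g\in H^1_g(\Omega,\mcS_0)$ restricted to $\Omega_\eps$ gives $\mcF_\eps[Q_\eps]\leq \mcF_\eps[\tilde g|_{\Omega_\eps}]$; the bulk and elastic terms are trivially bounded since $|\mcP_\eps|\to 0$, while the surface term is
\[
\eps^{3-2\alpha}\int_{\partial\mcP_\eps} f_s(\tilde g,\nu)\,\d\sigma \lesssim \eps^{3-2\alpha}\cdot N_\eps \cdot \eps^{2\alpha} \lesssim 1,
\]
using $N_\eps\lesssim \eps^{-3}$ from~\eqref{hp:Omega_eps}. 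Coercivity of $f_b$ propagates this to $\|Q_\eps\|_{H^1(\Omega_\eps)}+\|Q_\eps\|_{L^6(\Omega_\eps)}\leq C$, and the uniform bound for the harmonic extension operator (also to be established in Section~\ref{Sec:conv}) yields $\|E_\eps Q_\eps\|_{H^1(\Omega)}\leq C$. Extracting a subsequence, $E_\eps Q_\eps \rightharpoonup Q_0$ weakly in $H^1(\Omega)$, with $Q_0\in H^1_g(\Omega,\mcS_0)$.

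Minimality of $Q_0$ follows by a standard $\Gamma$-convergence argument: the liminf inequality $\mcF_0[Q_0]\leq \liminf_{\eps\to 0}\mcF_\eps[Q_\eps]$ combined with, for each competitor $Q\in H^1_g(\Omega,\mcS_0)$, a recovery sequence $Q_\eps^\sharp$ with $\mcF_\eps[Q_\eps^\sharp]\to \mcF_0[Q]$, yields $\mcF_0[Q_0]\leq \mcF_0[Q]$. The main technical obstacle is upgrading the weak convergence $E_\eps Q_\eps\rightharpoonup Q_0$ to strong $H^1$ convergence. For this I would apply the recovery sequence argument with $Q = Q_0$ itself to obtain the energy convergence $\mcF_\eps[Q_\eps]\to \mcF_0[Q_0]$, and then exploit the strong convexity of $f_e$ from~\eqref{hp:fe}: a standard convex analysis argument (subtracting the affine lower bound from strong convexity and passing to the limit) converts convergence of the elastic energies into $L^2$ convergence of $\nabla(E_\eps Q_\eps)$, hence strong $H^1$ convergence.
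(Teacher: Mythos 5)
Your overall architecture (direct method at fixed $\eps$, a uniform upper bound on $\inf\mcF_\eps$ via recovery sequences, the liminf inequality, and the upgrade to strong $H^1$ convergence from energy convergence plus strong convexity of $f_e$) is the same as the paper's, which delegates the last step to the argument of Proposition~\ref{prop:convergence}. However, both of your fixed-$\eps$ ingredients have genuine gaps, and they are precisely the points where the critical quartic growth of $f_s$ makes this problem nontrivial. First, the compactness of minimising sequences: your argument is circular, since the bound $\mcF_\eps[Q_n]\leq M$ does not by itself control the bulk terms --- the surface integral has no sign, so large positive bulk energy could a priori be compensated by a large negative surface term. The standard embedding $H^{1/2}(\partial\Omega_\eps)\hookrightarrow L^4(\partial\Omega_\eps)$ cannot break this circle: it bounds $\int_{\partial\mcP_\eps}|Q|^4\,\d\sigma$ by $\|Q\|_{H^1(\Omega_\eps)}^4$, and a negative term of order $\|\nabla Q\|_{L^2}^4$ cannot be absorbed by $\lambda_e^{-1}\|\nabla Q\|_{L^2}^2+\mu\|Q\|_{L^6}^6$, however small its coefficient. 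What is needed is the refined trace inequality of Lemma~\ref{lem:trace}/Lemma~\ref{lem:trace_eps}, whose right-hand side is \emph{quadratic} in $\nabla Q$ (plus $|Q|^{6}$ and $|Q|^4$ bulk terms) and carries the prefactor $\eps^{3-2\alpha}$; combined with $\alpha<3/2$ and $f_b\geq\mu|Q|^6-C$ this gives Proposition~\ref{prop:coercivity}, and it is exactly here that the restriction ``for $\eps$ small enough'' in the statement originates. (A smaller issue of the same kind: your bound $\eps^{3-2\alpha}\int_{\partial\mcP_\eps}f_s(\tilde g,\nu)\,\d\sigma\lesssim 1$ presumes $\tilde g$ bounded near the inclusions, which an $H^{1/2}$ boundary datum does not provide; the paper instead gets the uniform upper bound from $\mcF_\eps[Q]\to\mcF_0[Q]$ for an arbitrary fixed competitor, via Lemma~\ref{lem:J_conv} and Remark~\ref{remark:Gamma}.)

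Second, the lower semicontinuity at fixed $\eps$: your claim of ``strong $L^4$ trace convergence on $\partial\mcP_\eps$'' along a weakly convergent sequence is false, because $H^{1/2}$ of a two-dimensional surface embeds in $L^4$ only critically, hence not compactly; and in fact Lemma~\ref{lem:not_lsc} exhibits a functional of exactly this form which is \emph{not} sequentially weakly lower semicontinuous, at any fixed $\eps$. The correct statement, Proposition~\ref{prop:lsc}, holds only on $H^1$-bounded sets and for $\eps\leq\eps_0(M)$, and its proof does not use trace compactness: it exploits the strong convexity modulus $\theta$ of $f_e$ to generate a surplus $\theta\mu$ in the elastic term, and the trace inequality (again with the factor $\eps^{3-2\alpha}$) to show the possible drop of the surface term is at most $C_M\eps^{3-2\alpha}\mu$, which is dominated for small $\eps$ because $\alpha<3/2$. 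With these two propositions in place of your appeals to standard Sobolev/trace embeddings, the rest of your plan goes through and coincides with the paper's proof; note also that the strong $H^1$ convergence requires, besides the convexity argument you describe, the extension estimate~\eqref{harm_extension} together with equi-integrability to control $\nabla(E_\eps Q_\eps)$ inside $\mcP_\eps$.
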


\begin{remark} The bulk potential should satisfy the symmetry requirement $f_b(Q)=f_b(RQR^{\mathsf{T}})$ for any $Q\in\mcS_0$ and $R\in O(3)$ and as such it can be shown that it is a function of $\tr(Q^2)$ and $\tr(Q^3)$. The most commonly used potential, the one we mentioned before, the quartic Landau-de Gennes
potential~\eqref{Eq:bulkLdG}  is the simplest Taylor expansion type of bulk potential that does satisfy the symmetry and it is cut at fourth order, because this is the lowest order term that predicts as a global minimizer a uniaxial phase (i.e. the global minimizer has two equal eigenvalues).
However, it does \emph{not} satisfy the assumptions 
of Theorem~\ref{th:global-min}. Nevertheless, Theorem~\ref{th:global-min}
\emph{does} apply to the sextic Landau-de Gennes potential that can be relevant for the so-called biaxial minimizers (i.e. all eigenvalues are distinct), see~\cite{deGennes}, Sec. $2.3.3$:
\[
 \begin{split}
  f_b(Q) &= a_2\,\tr(Q^2) - a_3\,\tr(Q^3) + a_4\left(\tr(Q^2)\right)^2 \\
  &\qquad\qquad + a_5\,\tr(Q^2)\tr(Q^3) + a_6\left(\tr(Q^2)\right)^3 + a_6^\prime\left(\tr(Q^3)\right)^2,
 \end{split}
\]
so long as $a_6>0$ and $6a_6 + a_6^\prime>0$. 
\end{remark}

\paragraph*{Additional notation.}

We define
\begin{equation} \label{eq:J_eps}
 J_\eps[Q] := \eps^{3-2\alpha} \int_{\partial\mcP_\eps} 
 f_s(Q, \, \nu)\, \d\sigma
 \qquad \textrm{for } Q\in H^1_g(\Omega_\eps, \mcS_0)
\end{equation}
and
\begin{equation} \label{eq:J_0}
 J_0[Q] := \int_\Omega f_{hom} (Q, \, x) \, \d x \qquad
 \textrm{for } Q\in H^1_g(\Omega, \mcS_0).
\end{equation}
The functional~$J_0$ is the candidate limit of the surface integrals, $J_\eps$.

\section{Properties of the functional~$\mcF_\eps$}
\label{Sec:prop}
\subsection{Analytical tools: Trace and extension}

One of the main tools we will use in the sequel is the following
$L^p$-trace inequality, which is adapted 
from~\cite[Lemma~4.1]{ferromagnetic}.
Given a set~$\mcP\subseteq\R^3$ that contains the origin and a number~$a>0$, 
we set $a\mcP :=\{ax\colon x\in\mcP\}$.

\begin{lemma} \label{lem:trace}
 Let~$\mcP\subseteq\R^3$ be a closed, convex set that satisfies~\eqref{hp:P}.
 Let~$p\in [2, \, 4]$. Then, there exists $C = C(\mcP, \, \phi, \, p)>0$
 such that, for any~$a>0$, $b\geq 2a$ and any~$u\in H^1(b\mcP\setminus a\mcP)$, there holds
 \[
  \int_{\partial(a\mcP)} \abs{u}^p \d\sigma \leq C 
  \int_{b\mcP\setminus a\mcP} \left(\abs{\nabla u}^2 + \abs{u}^{2p-2}\right)\d x
  + \frac{Ca^2}{b^3} \int_{b\mcP\setminus a\mcP} \abs{u}^p\d x.
 \]
\end{lemma}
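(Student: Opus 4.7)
The plan is to reduce to the model case $\mcP=\overline{B}_1$ via a bi-Lipschitz change of variables, and then to establish the inequality on a spherical annulus by an integration-by-parts argument with a carefully chosen radial vector field.

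As a first step, I would extend the map~$\phi$ of assumption~\eqref{hp:P} to a positively $1$-homogeneous bi-Lipschitz homeomorphism $\Phi\colon\R^3\to\R^3$ by setting $\Phi(y):=|y|\,\phi(y/|y|)$ for $y\neq 0$ and $\Phi(0)=0$. Since $\phi$ sends~$\partial B_1$ onto~$\partial\mcP$ (by invariance of domain, as $\phi(0)=0$ lies in the interior of the convex set~$\mcP$) and $\mcP$ is strictly star-shaped with respect to the origin, $\Phi$ maps $B_r$ onto~$r\mcP$ for every $r>0$, with Lipschitz constants and Jacobian determinant controlled uniformly in~$r$ by the $1$-homogeneity. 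Pulling~$u$ back through~$\Phi$ then reduces the desired estimate, up to constants depending only on $\mcP$ and~$\phi$, to the analogous inequality on $B_b\setminus B_a$.

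For the spherical case, I would employ the vector field
\[
 X(y) := -a^2\,\eta(|y|)\,\frac{y}{|y|^3},
\]
where $\eta\colon[a,b]\to[0,1]$ is a smooth cutoff with $\eta(a)=1$, with $\eta\equiv 0$ on an outer collar near~$b$, and with $|\eta'|\lesssim 1/b$; the hypothesis $b\geq 2a$ is precisely what affords such a cutoff and keeps the support of~$\eta'$ in the region $|y|\sim b$. Using the identity $\operatorname{div}(y/|y|^3)=0$ for~$y\neq 0$ in~$\R^3$, a direct computation yields $\operatorname{div}X(y)=-a^2\eta'(|y|)/|y|^2$, so that $|\operatorname{div}X|\lesssim a^2/b^3$ throughout~$B_b\setminus B_a$. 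Meanwhile $|X|\leq 1$, $X\cdot\nu=1$ on $\partial B_a$ (with $\nu$ the outward normal to the annulus), and $X\equiv 0$ on~$\partial B_b$. Applying the divergence theorem to~$|u|^p X$, first for smooth~$u$ and then by density for~$u\in H^1$, bounds $\int_{\partial B_a}|u|^p\,\d\sigma$ by $\int|u|^p|\operatorname{div}X|+p\int|u|^{p-1}|\nabla u|$; the first piece is majorised by $(Ca^2/b^3)\int|u|^p$, and the second, by Young's inequality, by $\tfrac{p}{2}\int(|\nabla u|^2+|u|^{2p-2})$. The constraint $p\in[2,4]$ together with the Sobolev embedding $H^1\hookrightarrow L^6$ ensures that all integrals involved are finite.

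The main obstacle is selecting a vector field whose divergence carries the sharp scaling~$a^2/b^3$, rather than the naive~$1/b$ that a plain cutoff of the inward radial field would give. The trick is to start from the divergence-free Coulomb-type field~$y/|y|^3$ and multiply only by a cutoff vanishing on~$\partial B_b$, so that the divergence is confined to a thin outer shell where $|y|\sim b$; the extra factor~$1/|y|^2$ from the Coulomb field then supplies the missing powers of~$b$. The appearance of the specific exponent in~$a^2/b^3$ reflects the ratio of the inner surface measure (of order~$a^2$) to the volume of the outer shell (of order~$b^3$), and is tight, as one checks on constant test functions.
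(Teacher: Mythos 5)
Your proof is correct. The first step --- extending $\phi$ radially to the $1$-homogeneous bi-Lipschitz map $\Phi(y)=|y|\,\phi(y/|y|)$ and thereby reducing to the case $\mcP=\overline{B}_1$ with constants independent of $a$ and $b$ --- is exactly the reduction used in the paper. Where you genuinely diverge is in the model annulus estimate: the paper writes $|u|^p(a,\theta,\varphi)\le |u|^p(\rho,\theta,\varphi)+\frac{p}{2a^2}\int_a^\rho(|\partial_\rho u|^2+|u|^{2p-2})s^2\,\d s$ along radii and then averages over $\rho\in(a,b)$ in spherical coordinates, the factor $a^2/b^3$ emerging from the ratio of the inner surface element to the volume $b^3-a^3\gtrsim b^3$; you instead apply the divergence theorem to $|u|^p X$ with the cutoff Coulomb field $X=-a^2\eta(|y|)\,y/|y|^3$, whose flux through $\partial B_a$ is exactly the trace integral and whose divergence is confined to the outer collar with size $\lesssim a^2/b^3$, after which Young's inequality gives the gradient term with the same exponents. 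Your computations check out: $X\cdot\nu=1$ on $\partial B_a$ with the annulus' outward normal, $|X|\le 1$, $\operatorname{div}X=-a^2\eta'(|y|)/|y|^2$, the hypothesis $b\ge 2a$ indeed allows a cutoff with $|\eta'|\lesssim 1/b$ supported where $|y|\gtrsim b$, and the restriction $p\in[2,4]$ together with $H^1\hookrightarrow L^6$ and $H^{1/2}(\partial B_a)\hookrightarrow L^4$ justifies the density passage. The trade-off between the two routes is mostly one of presentation: the paper's argument is more elementary, needing only absolute continuity of Sobolev functions on a.e.\ ray and Fubini, with no Gauss--Green theorem for $W^{1,1}$ fields; your argument is coordinate-free, makes the origin of the scaling $a^2/b^3$ conceptually transparent (inner flux versus divergence spread over a shell of volume $\sim b^3$), and would adapt directly to star-shaped reference domains without passing through explicit spherical coordinates, at the modest cost of the smoothing/density step you already indicate.
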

\begin{proof}
 Let~$\phi\colon\overline{B}_1\subseteq\R^3\to\mcP$
 be a bijective, Lipschitz map, with Lipschitz inverse,
 such that~$\phi(0) = 0$; such a map exists 
 by the assumption~\eqref{hp:P}.
 The map~$\phi_*(x) := |x|\phi(x/|x|)$,
 defined for $x\in\R^3\setminus\{0\}$,
 is Lipschitz 
 and maps $\bar{B}_b\setminus\bar{B}_a$ 
 onto~$b\mcP\setminus a\mcP$. 
 If~$t_1$, $t_2$ are positive numbers and 
 $y_1\in\partial\mcP$, $y_2\in\partial\mcP$ are such 
 that $t_1y_1 = t_2 y_2$, then $y_1 = y_2$ because~$\mcP$
 is convex and the origin lies in the interior of~$\mcP$;
 as a consequence, $\phi_*$ is injective. Finally,
 $\|\nabla\phi_*\|_{L^\infty}$
 and $\|\nabla(\phi_*^{-1})\|_{L^\infty}$ are bounded by a constant
 that depends on~$\phi$ and~$\mcP$, but not on~$a$, $b$.
 Therefore, up to composition with~$\phi_*^{-1}$,
 we can assume without loss of generality that $\mcP = \bar{B}_1$.
 
 Having reduced to the case~$\mcP$ is a ball, we can use spherical coordinates:
 \[
  x_1 = \rho\cos\theta\sin\varphi, \quad x_2 = \rho\sin\theta\sin\varphi,
  \quad x_3 = \rho\cos\varphi
 \]
 where~$\rho\in [a, \, b]$, $\theta\in [0, \, 2\pi]$, $\varphi\in [0, \, \pi]$.
 For $u\in H^1(b\mcP\setminus a\mcP)$ and a.e. $(\rho, \, \theta, \, \varphi)$,
 there holds
 \[
  \begin{split}
   \abs{u}^p(a, \, \theta, \, \varphi) &= \abs{u}^p(\rho, \, \theta, \, \varphi) 
      - \int_a^\rho \partial_\rho\left(\abs{u}^p\right) (s, \, \theta, \, \varphi)\, \d s \\
   &\leq \abs{u}^p(\rho, \, \theta, \, \varphi) 
      + p \int_a^\rho \abs{u}^{p-1}\abs{\partial_\rho u} (s, \, \theta, \, \varphi)\, \d s \\
   &\leq \abs{u}^p(\rho, \, \theta, \, \varphi) + \frac{p}{2} \int_a^\rho 
      \left(\abs{\partial_\rho u}^2 + \abs{u}^{2p-2} \right) \d s \\
   &\leq \abs{u}^p(\rho, \, \theta, \, \varphi) 
     + \frac{p}{2a^2} \int_a^\rho \left( \abs{\partial_\rho u}^2 + \abs{u}^{2p-2} \right) s^2 \d s.
  \end{split}
 \]
 We divide both sides of this inequality by $\rho^2\sin\varphi$
 (i.e. the Jacobian of the change of coordinates)
 and integrate with respect to 
 $(\rho, \, \theta, \, \varphi)\in (a, \, b)\times(0, \, 2\pi)\times(0, \, \pi)$.
 We obtain
 \[
  \begin{split}
   &\frac{b^3 - a^3}{3} \int_0^{2\pi}\int_0^\pi \abs{u}^p(a, \, \theta, \, \varphi)
     \sin\varphi \, \d\theta\,\d\varphi
     \leq \int_{B_b\setminus B_a}\abs{u}^p\d x  \\
   &\qquad\qquad\qquad + \frac{p(b^3 - a^3)}{6a^2} \int_{B_b\setminus B_a} 
     \left(\abs{\partial_\rho u}^2 + \abs{u}^{2p-2} \right) \d x.
  \end{split}
 \]
 Since the surface element on the sphere~$\partial B_a$ is $\d\sigma = a^2\sin\varphi \, \d\theta\,\d\varphi$,
 the previous inequality may be rewritten as
 \[
  \frac{b^3 - a^3}{3a^2} \int_{\partial B_a} \abs{u}^p \d\sigma
     \leq \int_{B_b\setminus B_a}\abs{u}^p\d x + \frac{p(b^3 - a^3)}{6a^2} \int_{B_b\setminus B_a} 
     \left(\abs{\partial_\rho u}^2 + \abs{u}^{2p-2} \right) \d x.
 \]
 We multiply both sides of this inequality by $(b^3-a^3)/(3a^2)$. By taking into account 
 that $b^3 - a^3\gtrsim b^3$, because we have assumed that $b\geq 2a$, the lemma follows. 
\end{proof}

\begin{lemma} \label{lem:trace_eps}
 For any~$Q\in H^1(\Omega_\eps, \mcS_0)$ and any~$p\in[2, \, 4]$, there holds
 \[
  \eps^{3-2\alpha} \int_{\partial\mcP_\eps} \abs{Q}^p \d\sigma \lesssim 
    \eps^{3 - 2\alpha} \int_{\Omega_\eps} \left(\abs{\nabla Q}^2 + \abs{Q}^{2p-2}\right)\d x
    + \int_{\Omega_\eps} \abs{Q}^p\d x.
 \]
\end{lemma}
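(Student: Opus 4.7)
The plan is to apply Lemma~\ref{lem:trace} to each inclusion $\mcP_\eps^i$ separately and then sum. Since $\mcP_\eps^i = x_\eps^i + \eps^\alpha R_\eps^i \mcP$, a rigid change of variables $y = (R_\eps^i)^{\mathsf T}(x - x_\eps^i)$ turns $\mcP_\eps^i$ into $\eps^\alpha \mcP$. Rotations preserve both the Lebesgue measure and the surface measure, so the inequality from Lemma~\ref{lem:trace} will transfer back to the original coordinates with the same constant.

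With $a := \eps^\alpha$ fixed, I need a choice of $b$ so that: (i) the annular region $b\mcP \setminus a\mcP$ is contained in a ball around the origin small enough that, after translating/rotating back, the enlarged regions $x_\eps^i + R_\eps^i(b\mcP \setminus a\mcP)$ are pairwise disjoint and lie in $\Omega_\eps$; and (ii) $b \geq 2a$. Let $\rho_\mcP := \max_{y\in\mcP}|y|$ (finite by~\eqref{hp:P}). Choosing $b := (\lambda_\Omega/\rho_\mcP)\,\eps$, the rotated set $x_\eps^i + R_\eps^i(b\mcP)$ lies in $B(x_\eps^i, \lambda_\Omega \eps)$, so by~\eqref{hp:Omega_eps} these balls are disjoint and contained in~$\Omega$; this gives~(i). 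For~(ii), $b/a = (\lambda_\Omega/\rho_\mcP)\,\eps^{1-\alpha} \to \infty$ as $\eps\to 0$ by~\eqref{hp:alpha}, so $b \geq 2a$ for all sufficiently small~$\eps$.

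Lemma~\ref{lem:trace}, applied to $Q$ (composed with the isometry) on $b\mcP\setminus a\mcP$, then yields, after undoing the change of coordinates,
\[
 \int_{\partial\mcP_\eps^i}\abs{Q}^p\,\d\sigma
 \;\leq\; C\!\int_{U_\eps^i}\!\left(\abs{\nabla Q}^2 + \abs{Q}^{2p-2}\right)\d x
 \;+\; \frac{C a^2}{b^3}\int_{U_\eps^i}\abs{Q}^p\,\d x,
\]
where $U_\eps^i := x_\eps^i + R_\eps^i(b\mcP \setminus a\mcP) \subseteq \Omega_\eps$ and the constant~$C$ depends only on $\mcP$, $\phi$, $p$. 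Summing over $i=1,\ldots,N_\eps$, using that the $U_\eps^i$ are pairwise disjoint subsets of~$\Omega_\eps$, and plugging in $a^2/b^3 = (\rho_\mcP/\lambda_\Omega)^3\,\eps^{2\alpha-3}$, I obtain
\[
 \int_{\partial\mcP_\eps}\abs{Q}^p\,\d\sigma
 \;\lesssim\; \int_{\Omega_\eps}\left(\abs{\nabla Q}^2 + \abs{Q}^{2p-2}\right)\d x
 \;+\; \eps^{2\alpha-3}\int_{\Omega_\eps}\abs{Q}^p\,\d x.
\]
Multiplying through by $\eps^{3-2\alpha}$ gives precisely the desired inequality.

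There is no real obstacle here; the only point that needs a line of care is verifying that the enlarged annuli $U_\eps^i$ really do fit inside $\Omega_\eps$ without overlap, which is where hypotheses~\eqref{hp:alpha} and~\eqref{hp:Omega_eps} enter. Everything else is bookkeeping once the scales $a=\eps^\alpha$ and $b\sim\eps$ are set, and the final cancellation $\eps^{3-2\alpha}\cdot\eps^{2\alpha-3}=1$ confirms that the choice of $b$ is the natural one.
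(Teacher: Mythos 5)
Your proof is correct and follows essentially the same route as the paper: apply Lemma~\ref{lem:trace} on an annular region around each inclusion with $a=\eps^\alpha$ and $b$ comparable to $\lambda_\Omega\eps$ (the paper writes $b=\mu\eps$ with $\mu$ small), using \eqref{hp:Omega_eps} for disjointness and containment and \eqref{hp:alpha} to ensure $b\geq 2a$ for small $\eps$, then sum over $i$ and multiply by $\eps^{3-2\alpha}$. Your version merely makes explicit the rigid change of variables and the choice $b=(\lambda_\Omega/\rho_\mcP)\eps$, which the paper leaves implicit.
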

\begin{proof}
 For each~$i$, consider the inclusion~$\mcP_\eps^i = x_\eps^i + \eps^\alpha R_\eps^i\mcP$
 and~$\widehat{\mcP}_\eps^i := x_\eps^i + \mu\eps R_\eps^i\mcP$, where $\mu>0$
 does not depend on~$i$, $\eps$. For~$\eps$ small enough we have
 $\mcP_\eps^i\csubset\widehat{\mcP}_\eps^i$ and, thanks to assumption~\eqref{hp:Omega_eps},
 by taking $\mu$ small enough we can make sure that 
 the $\widehat{\mcP}_\eps^i$'s are pairwise disjoint. Therefore,
 we can apply Lemma~\ref{lem:trace} on each $\widehat{\mcP}_\eps^i\setminus\mcP_\eps^i$
 (with~$a = \eps^\alpha$, $b=\mu\eps$) and sum the corresponding inequalities.
 This proves the lemma.
\end{proof}

It will be useful to consider maps defined on the fixed domain~$\Omega$
instead of~$\Omega_\eps$. To this end, we consider the harmonic extension operator
$E_\eps\colon H^1_g(\Omega_\eps, \mcS_0)\to H^1_g(\Omega, \mcS_0)$.
Given~$Q\in H^1_g(\Omega_\eps, \mcS_0)$ we let $E_\eps Q := Q$ on~$\Omega_\eps$
and, inside each inclusion~$\mcP_\eps^i$, we define $E_\eps Q$ as the unique solution of
\[
 \begin{cases}
  -\Delta (E_\eps Q) = 0 & \textrm{in } \mcP_\eps^i\\
  E_\eps Q = Q         & \textrm{on } \partial\mcP_\eps^i.
 \end{cases}
\]
The operator~$E_\eps$ is linear and uniformly bounded with respect to~$\eps>0$, 
as demonstrated by the following

\begin{lemma} \label{lem:extension}
 There exists a constant $C>0$ such that 
 $\|\nabla (E_\eps Q)\|_{L^2(\Omega)} \leq C \|\nabla Q\|_{L^2(\Omega_\eps)}$
 for any~$\eps>0$ and any $Q\in H^1_g(\Omega_\eps, \mcS_0)$.
\end{lemma}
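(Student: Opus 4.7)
The plan is to exploit the Dirichlet principle together with a scaling argument built around a single reference extension operator, inclusion by inclusion, and then sum over $i$ using the disjointness established in the proof of Lemma~\ref{lem:trace_eps}.

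First, I would observe that since $E_\eps Q$ is harmonic inside each $\mcP_\eps^i$ with boundary trace $Q_{|\partial\mcP_\eps^i}$, the Dirichlet principle gives
\[
 \int_{\mcP_\eps^i} |\nabla(E_\eps Q)|^2 \,\d x \leq \int_{\mcP_\eps^i} |\nabla w|^2\,\d x
\]
for any $w\in H^1(\mcP_\eps^i,\mcS_0)$ with $w = Q$ on $\partial\mcP_\eps^i$. So the task reduces to exhibiting one such competitor $w$ whose gradient is controlled, component by inclusion, by $\|\nabla Q\|_{L^2}$ on a thin neighbourhood.

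Second, I would set up a reference extension. Let $\widehat{\mcP} := 2\mcP$; by assumption \eqref{hp:P}, $\widehat{\mcP}\setminus\mcP$ is a fixed bounded Lipschitz domain, so there is a bounded Sobolev extension operator $T\colon H^1(\widehat{\mcP}\setminus\mcP,\mcS_0)\to H^1(\widehat{\mcP},\mcS_0)$. To get rid of the zeroth-order term in the estimate, I would apply $T$ to $u - \bar u$ (where $\bar u$ is the mean of $u$ on $\widehat{\mcP}\setminus\mcP$) and then add back the constant $\bar u$; Poincar\'e's inequality on the fixed reference annulus $\widehat{\mcP}\setminus\mcP$ then yields a constant $C_*=C_*(\mcP,\phi)$ with
\[
 \|\nabla \widetilde T u\|_{L^2(\mcP)} \leq C_*\, \|\nabla u\|_{L^2(\widehat{\mcP}\setminus\mcP)}.
\]

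Third, I would transplant this reference estimate to each inclusion by scaling. Define $\widehat{\mcP}_\eps^i := x_\eps^i + 2\eps^\alpha R_\eps^i \mcP$, so that the affine isometry $y \mapsto x_\eps^i + \eps^\alpha R_\eps^i y$ maps $\mcP \to \mcP_\eps^i$ and $\widehat{\mcP} \to \widehat{\mcP}_\eps^i$. Because this is a pure dilation by the scalar $\eps^\alpha$ composed with a rigid motion, a direct change of variables shows $\int_{\widehat{\mcP}_\eps^i\setminus\mcP_\eps^i}|\nabla_x Q|^2 \,\d x = \eps^\alpha \int_{\widehat{\mcP}\setminus\mcP} |\nabla_y \widetilde Q|^2\,\d y$ and similarly for the extension on $\mcP_\eps^i$; the Jacobians on both sides match exactly, so the reference inequality transplants with the \emph{same} constant $C_*$. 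This produces a competitor $w_i\in H^1(\mcP_\eps^i,\mcS_0)$ with $w_i = Q$ on $\partial\mcP_\eps^i$ and $\|\nabla w_i\|_{L^2(\mcP_\eps^i)} \leq C_*\|\nabla Q\|_{L^2(\widehat{\mcP}_\eps^i\setminus \mcP_\eps^i)}$.

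Finally, I would sum. Since $\alpha>1$, the enlarged inclusions $\widehat{\mcP}_\eps^i$ are, for $\eps$ small, pairwise disjoint and contained in $\Omega$ (exactly as in the proof of Lemma~\ref{lem:trace_eps}, with the constant $\mu$ chosen sufficiently small, here just $2\eps^{\alpha-1}\ll\lambda_\Omega$). Combining the Dirichlet principle with the scaled reference estimate and summing over $i$,
\[
 \int_\Omega |\nabla (E_\eps Q)|^2\,\d x
 = \int_{\Omega_\eps} |\nabla Q|^2\,\d x + \sum_{i=1}^{N_\eps} \int_{\mcP_\eps^i} |\nabla (E_\eps Q)|^2\,\d x
 \leq (1 + C_*^2)\int_{\Omega_\eps} |\nabla Q|^2\,\d x,
\]
which is the claim. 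The only delicate point is the $\eps$-independence of $C_*$, and this is where convexity and the Lipschitz equivalence $\phi\colon\overline{B}_1\to\mcP$ in \eqref{hp:P} do the essential work: they make $\widehat{\mcP}\setminus\mcP$ a single, fixed Lipschitz domain on which Poincar\'e and Sobolev extension constants depend only on $\mcP$ and $\phi$, and the dilation by the scalar $\eps^\alpha$ then preserves the geometry exactly; no anisotropic rescaling appears.
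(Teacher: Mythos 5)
Your proof is correct, and its skeleton is the same as the paper's: reduce to a single inclusion, introduce the doubled copies $x_\eps^i + 2\eps^\alpha R_\eps^i\mcP$, which are pairwise disjoint for small $\eps$ by \eqref{hp:Omega_eps} and $\alpha>1$, exploit the scale invariance of the homogeneous estimate under the dilation--rotation $y\mapsto x_\eps^i+\eps^\alpha R_\eps^i y$, and then sum over $i$. The only place you diverge is the fixed-domain estimate on $\mcP$ versus $2\mcP\setminus\mcP$: the paper gets it in one line from the standard bound of the harmonic extension by the $H^{1/2}(\partial\mcP)$ seminorm of the boundary trace followed by the trace inequality, whereas you invoke the Dirichlet principle and build an explicit competitor via a Sobolev extension operator on the Lipschitz annulus $2\mcP\setminus\mcP$, with mean subtraction and Poincar\'e to discard the zeroth-order term. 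Your variant buys a completely elementary argument that avoids trace-space machinery and makes transparent that the constant depends only on $\mcP$ and $\phi$ (one should just note, as the paper does implicitly, that $0\in\mathrm{int}\,\mcP$ so the annulus is a genuine connected Lipschitz domain); the paper's route is shorter but leans on the $H^{1/2}$ characterisation. Both proofs, like the paper's, really establish the claim for $\eps$ small enough, which is all that is used later.
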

\begin{proof} 
 Consider a single inclusion $\mcP_\eps^i = x_\eps^i + \eps^\alpha R_\eps^i\mcP$ 
 and let~$\widetilde{\mcP}_\eps^i := x_\eps^i + 2\eps^\alpha R_\eps^i\mcP$.
 Thanks to~\eqref{hp:Omega_eps}, we know that the $\tilde{\mcP}_\eps^i$'s 
 are pairwise disjoint for~$\eps$ small enough. Therefore, 
 it suffices to prove that
 \begin{equation} \label{harm_extension}
  \|\nabla (E_\eps Q)\|^2_{L^2(\mcP_\eps^i)} \leq 
  C \|\nabla Q\|^2_{L^2(\widetilde{\mcP}_\eps^i\setminus\mcP_\eps^i)}.
 \end{equation}
 This inequality is scale-invariant and hence, by a scaling argument,
 we can assume without loss of generality that $\eps = 1$, $\mcP_\eps^i = \mcP$,
 $\widetilde{\mcP}_\eps^i = 2\mcP$. Then, elementary properties of Lapace's equation 
 and the trace inequality yield
 \[
  \|\nabla (E_\eps Q)\|^2_{L^2(\mcP)} \lesssim [Q]^2_{H^{1/2}(\partial\mcP)}
  \lesssim \|\nabla Q\|^2_{L^2(2\mcP\setminus\mcP)}. \qedhere
 \]
\end{proof}

\subsection{Equicoercivity of the~$\mcF_\eps$'s}

As we will see below, the 
assumptions~\eqref{hp:first}--\eqref{hp:last} are \emph{not} enough
to guarantee coercivity of~$\mcF_\eps$. However, it is possible to restore coercivity
under an additional assumption on~$f_b$, namely,
if there exist positive constants~$\mu$, $C$ such that
\begin{equation} \label{hp:coercivity}
 f_b(Q) \geq \mu |Q|^6 - C \qquad \textrm{for any } Q\in\mcS_0.
\end{equation}

\begin{prop} \label{prop:coercivity}
 Suppose that the assumptions~\eqref{hp:first}--\eqref{hp:last}
 and~\eqref{hp:coercivity} are satisfied. Let~$Q\in H^1_g(\Omega_\eps, \mcS_0)$
 satisfy~$\mcF_\eps[Q]\leq M$, for some ($\eps$-independent) constant~$M$. Then, there holds
 \[
  \int_{\Omega_\eps} \abs{\nabla Q}^2 \d x \leq M^\prime
 \]
 for~$\varepsilon>0$ small enough and for some~$M^\prime>0$
 depending only on~$M$, $f_e$, $f_b$, $f_s$, $\Omega$, $\mcP$.
\end{prop}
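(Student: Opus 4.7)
The plan is to extract the $H^1$-bound from the lower bound~\eqref{hp:coercivity} on~$f_b$ and the coercivity~\eqref{hp:fe} on~$f_e$, after showing that the (possibly negative) surface contribution can be absorbed into the bulk terms for~$\eps$ small. The key point is that, although $f_s$ is only controlled by a quartic growth in~$Q$ (via~\eqref{hp:fs}) and therefore is \emph{a priori} not bounded below, the prefactor $\eps^{3-2\alpha}$ is small (thanks to~\eqref{hp:alpha}), and the trace lemma~\ref{lem:trace_eps} converts surface integrals into volume integrals with scalings that match.

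First, I would apply~\eqref{hp:fs} with $Q_1=Q$, $Q_2=0$, and use the continuity of $f_s(0,\cdot)$ on the compact set $\mathbb{S}^2$, to obtain a pointwise lower bound of the form $f_s(Q,\nu) \geq -C(|Q|^4 + 1)$. Combining this with the bound $f_b(Q) \geq \mu|Q|^6 - C$ and the lower coercivity of $f_e$ from~\eqref{hp:fe}, the inequality $\mcF_\eps[Q]\leq M$ rearranges to
\[
 \lambda_e^{-1}\int_{\Omega_\eps}|\nabla Q|^2\,\d x + \mu\int_{\Omega_\eps}|Q|^6\,\d x
 \leq M + C|\Omega| + C\,\eps^{3-2\alpha}\int_{\partial\mcP_\eps}(|Q|^4 + 1)\,\d\sigma.
\]
Next, I would invoke Lemma~\ref{lem:trace_eps} with $p=4$ to bound
\[
 \eps^{3-2\alpha}\int_{\partial\mcP_\eps}|Q|^4\,\d\sigma
 \lesssim \eps^{3-2\alpha}\int_{\Omega_\eps}\left(|\nabla Q|^2 + |Q|^6\right)\d x
 + \int_{\Omega_\eps}|Q|^4\,\d x,
\]
while the constant term is $\eps^{3-2\alpha}|\partial\mcP_\eps|\lesssim \eps^{3-2\alpha}\cdot N_\eps\,\eps^{2\alpha}\lesssim 1$ because $N_\eps\lesssim\eps^{-3}$.

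Finally, since $3-2\alpha>0$ by~\eqref{hp:alpha}, for $\eps$ small enough the terms $C\eps^{3-2\alpha}\int|\nabla Q|^2$ and $C\eps^{3-2\alpha}\int|Q|^6$ can be absorbed into the left-hand side (say, by demanding $C\eps^{3-2\alpha}\leq\min\{\lambda_e^{-1}/2,\mu/2\}$). The remaining $\int_{\Omega_\eps}|Q|^4\,\d x$ is handled by Young's inequality $|Q|^4\leq \delta|Q|^6 + C_\delta$ with $\delta$ small enough to be absorbed into the $|Q|^6$ term. What remains gives the desired uniform bound on $\|\nabla Q\|_{L^2(\Omega_\eps)}$.

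The main technical obstacle, and really the only delicate point, is lining up the $\eps$-scalings: one must verify that the quartic growth allowed in~\eqref{hp:fs} is exactly compatible with the prefactor $\eps^{3-2\alpha}$ and the trace inequality with $p=4$, so that both the gradient term and the $|Q|^6$ term produced on the right are multiplied by a vanishing factor. This is precisely why the range $\alpha<3/2$ in~\eqref{hp:alpha} is used; had $\alpha$ been larger, the absorption argument would fail, as anticipated in Remark~\ref{remark:trade-off}.
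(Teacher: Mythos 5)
Your proposal is correct and follows essentially the same route as the paper's proof: a quartic lower bound on $f_s$ from~\eqref{hp:fs}, Lemma~\ref{lem:trace_eps} with $p=4$ to convert the surface term into volume terms, the lower bounds from~\eqref{hp:fe} and~\eqref{hp:coercivity}, and absorption of the $\eps^{3-2\alpha}$-weighted terms using $\alpha<3/2$. The only (inessential) difference is the leftover $\int_{\Omega_\eps}|Q|^4$ term: you absorb it pointwise via Young's inequality into $\mu\int_{\Omega_\eps}|Q|^6$, whereas the paper bounds it by H\"older as $C\bigl(\int_{\Omega_\eps}|Q|^6\bigr)^{2/3}$ and concludes by comparing linear and sublinear growth; both variants are equally valid.
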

\begin{proof}
 Recall that, due to the Assumption~\eqref{hp:fs}, there holds~$|f_s(Q)|\lesssim |Q|^4 + 1$.
 By applying Lemma~\ref{lem:trace_eps} (with the choice~$p=4$) and 
 the H\"older inequality, we obtain that
 \[
  \begin{split}
   J_\eps[Q] 
   &\geq - C_1\eps^{3 - 2\alpha} \int_{\Omega_\eps} \left(\abs{\nabla Q}^2 + \abs{Q}^{6}\right)\d x
    - C_1\int_{\Omega_\eps} \abs{Q}^4\d x - C \\
   &\geq - C_1\eps^{3 - 2\alpha} \int_{\Omega_\eps} \left(\abs{\nabla Q}^2 + \abs{Q}^{6} \right)\d x
    - C_2 \left(\int_{\Omega_\eps} \abs{Q}^{6}\d x\right)^{2/3} - C 
  \end{split}
 \]
 for some positive constants~$C_1$, $C_2$, $C$.
 On the other hand, the assumptions \eqref{hp:fe} and~\eqref{hp:coercivity} give
 \[
  \int_{\Omega_\eps} \left(f_e(\nabla Q) + f_b(Q)\right) \geq 
  C_3 \int_{\Omega_\eps} \left(\abs{\nabla Q}^2 + |Q|^{6}\right)\, \d x - C.
 \]
 Therefore, the energy bound $\mcF_\eps[Q]\leq M$ implies
 \[
  (C_3 - C_1\eps^{3-2\alpha}) \int_{\Omega_\eps} \left(\abs{\nabla Q}^2 + |Q|^{6}\right)\, \d x
  \leq C_2 \left(\int_{\Omega_\eps} \abs{Q}^{6}\d x\right)^{2/3} + C.
 \]
 Since~$\alpha<3/2$, the coefficient $C_3 - C_1\eps^{3-2\alpha}$ 
 is strictly positive for~$\eps$ small enough. The proposition follows by 
 observing that the left-hand side has linear growth in
 $\int_{\Omega_\eps}(|\nabla Q|^2 + |Q|^6)\, \d x$
 while the right-hand side has sublinear growth,
 hence the integral must be bounded.
\end{proof}

In the rest of this section, we illustrate some patologies
that may lead to the loss of equicoercivity.
If the assumption~\eqref{hp:coercivity} is not satisfied, the functional~$\mcF_\eps$
might be unbounded from below. To illustrate this phenomenon,
we consider an example that involves scalar functions. 
For simplicity, we assume that the inclusions are balls, i.e. $\mcP = B_1$ and
$\mcP_\eps = \cup_{i=1}^{N_\eps} B(x_\eps^i, \, \eps^\alpha)$.
For~$u\in H^1(\Omega_\eps, \, \R)$, define the functional
\begin{equation} \label{example}
 {\mycal G}_\eps(u) := \int_{\Omega_\eps} \left(|\nabla u|^2 + k |u|^q\right)\d x
 - \delta\eps^{3-2\alpha} \int_{\partial\mcP_\eps} |u|^p\,\d\sigma,
\end{equation}
where~$k$, $\delta$, $p$, $q$ are positive parameters.

\begin{lemma} \label{lem:unbounded}
 For any~$k>0$, $\delta>0$, $p>2$, $q<2p-2$, $\alpha>1$ and for $\varepsilon>0$ small enough,
 we have $\inf\{\mycal G_\eps(u)\colon u\in H^1(\Omega_\eps), \ u=0 \textrm{ on } \partial\Omega\} = -\infty$.
\end{lemma}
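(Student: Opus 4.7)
\medskip
\noindent My plan is to produce, for $\eps > 0$ fixed and small, a one-parameter family of admissible test functions $u_M \in H^1(\Omega_\eps, \R)$ vanishing on $\partial\Omega$ and concentrated on a thin annular shell around a single inclusion, along which ${\mycal G}_\eps(u_M) \to -\infty$ as $M \to +\infty$. The exponent condition $q < 2p - 2$ enters only at the last step: the surface gain scales as $M^p$, whereas an AM--GM optimisation of the shell thickness reduces the combined gradient-plus-$L^q$ bulk cost to $M^{(q+2)/2}$, which is strictly smaller than $M^p$ precisely when $q < 2p-2$.

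\medskip Since $N_\eps \geq 1$ for $\eps$ small (by~(\ref{hp:conv})), I would fix one inclusion $\mcP_\eps^1 = B(x_\eps^1, \eps^\alpha)$ and, for $M > 0$ and $t \in (0, (\lambda_\Omega\eps - \eps^\alpha)/2]$, set
\[
 u_{M,t}(x) := M\, f_t(|x - x_\eps^1|), \quad
 f_t(r) := \min\!\left\{1,\ \max\!\left\{0,\ 2 - \tfrac{r - \eps^\alpha}{t}\right\}\right\}.
\]
Hypothesis~(\ref{hp:Omega_eps}) ensures $\mathrm{supp}\, u_{M,t} \subset B(x_\eps^1, \lambda_\Omega\eps) \subset \Omega \setminus \bigcup_{j \geq 2} \mcP_\eps^j$, so $u_{M,t}$ vanishes on $\partial\Omega$ and on every other particle boundary, while equalling $M$ on $\partial\mcP_\eps^1$. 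Using $|f_t'| \leq 1/t$ and $|\mathrm{supp}\, u_{M,t}| \lesssim \eps^{2\alpha} t + t^3$, a direct computation yields
\[
 \int_{\Omega_\eps}\! |\nabla u_{M,t}|^2 \lesssim M^2\!\left(\tfrac{\eps^{2\alpha}}{t} + t\right),\
 k\!\int_{\Omega_\eps}\! |u_{M,t}|^q \lesssim k M^q(\eps^{2\alpha} t + t^3),\
 \delta\eps^{3-2\alpha}\!\int_{\partial \mcP_\eps^1}\!\! |u_{M,t}|^p\, \d\sigma = 4\pi\delta M^p \eps^3.
\]

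\medskip I would then restrict to $t \leq \eps^\alpha$ (absorbing the $t^3$ and $t$-linear terms into the $\eps^{2\alpha}/t$ and $\eps^{2\alpha} t$ ones) and pick $t = t(M)$ to minimise $M^2 \eps^{2\alpha}/t + k M^q \eps^{2\alpha} t$: for $q > 2$, AM--GM gives $t(M) := k^{-1/2} M^{(2-q)/2}$, which satisfies $t(M) \leq \eps^\alpha$ for $M$ large and produces a combined bulk cost $\lesssim M^{(q+2)/2}\eps^{2\alpha}$; for $q \leq 2$ I take $t = \eps^\alpha$ and use $M^q \leq M^2$ to get a bulk cost $\lesssim M^2 \eps^\alpha$. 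Setting $\sigma := \max\{2, (q+2)/2\}$, in both regimes
\[
 {\mycal G}_\eps(u_{M, t(M)}) \leq C(\eps, k)\, M^\sigma - 4\pi\delta\, \eps^3\, M^p,
\]
and the hypotheses $p > 2$ and $q < 2p - 2$ force $\sigma < p$, so $M \to +\infty$ yields ${\mycal G}_\eps(u_{M, t(M)}) \to -\infty$.

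\medskip The main obstacle I anticipate is the trade-off between the gradient and $L^q$ bulk costs. A fixed-shape bump with $u \equiv M$ on its support gives a combined cost of order $M^{\max\{2,q\}}$, which the $M^p$ surface gain cannot beat when $q > p$. The shrinking-shell construction is precisely what converts this cost into $M^{(q+2)/2}$, and the threshold $q < 2p - 2$ matches exactly the critical exponent of the trace inequality in Lemma~\ref{lem:trace} --- any stronger bulk growth would let that inequality bound the surface term, restoring coercivity.
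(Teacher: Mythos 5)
Your proposal is correct and takes essentially the same route as the paper: both build a test function of amplitude $M$ on the boundary of a single inclusion, concentrated in a boundary layer whose sharpness is tuned as a power of $M$, and the hypotheses $p>2$, $q<2p-2$ enter through the same exponent comparison (your optimal thickness $t\sim M^{(2-q)/2}$ corresponds precisely to the paper's choice $\gamma=M^{\beta}$ with $\beta=(q-2)/2$, which lies in its admissible window $q-p<\beta<p-2$). The differences are only cosmetic: the paper rescales a fixed annulus $B_2\setminus B_1$ carrying the power profile $M(2-|x|)^{\gamma}$, whereas you use a plateau-plus-linear-ramp in the physical variables and optimise the ramp thickness by AM--GM.
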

\begin{proof}
 Let~$U := B_2\setminus B_1$ be a spherical shell of inner radius~$1$ and outer radius~$2$.
 Consider the function
 \[
  v(x) := M(2 - |x|)^\gamma \qquad \textrm{for } x\in U,
 \]
 where $M\geq 1$, $\gamma\geq 1$ are parameters to be specified later on. We can explicitely compute
 the energy of~$v$:
 \begin{gather*}
  \int_U\abs{\nabla v}^2\d x = 4\pi\gamma^2M^2  \int_1^2(2-\rho)^{2\gamma-2}\rho^2\, \d\rho
  \leq \frac{16\pi\gamma^2 M^2}{2\gamma-1} \\
  \int_U \abs{v}^q \d x = 4\pi M^q  \int_1^2(2-\rho)^{q\gamma}\rho^2\, \d\rho
  \leq \frac{16\pi M^q}{q\gamma + 1}, \quad
  \int_{\partial B_1} \abs{v}^p \d\sigma = 4\pi M^p.
 \end{gather*}
 Now, choose one of the inclusions, say~$B(x_\eps^1, \eps^\alpha)$. Thanks to \eqref{hp:Omega_eps},
 for~$\eps$ small enough we have $B(x_\eps^1, 2\eps^\alpha)\setminus 
 \bar{B}(x_\eps^1, \eps^\alpha)\subseteq\Omega_\eps$. We define
 $u_\eps(x) := v(\eps^{-\alpha}(x-x_\eps^1))$ if~$x\in B(x_\eps^1, 2\eps^\alpha)\setminus 
 \bar{B}(x_\eps^1, \eps^\alpha)$ and~$u_\eps(x) := 0$ otherwise. By scaling, we have
 \begin{equation} \label{unbounded1}
  \begin{split}
   {\mycal G}_\eps(u_\eps) &= \eps^\alpha \int_U\abs{\nabla v}^2\d x + k \eps^{3\alpha} \int_U \abs{v}^q \d x
   - \delta \eps^{3} \int_{\partial B_1} \abs{v}^p\d\sigma \\
   & \leq C_{\eps,1} \left(\frac{\gamma^2 M^2}{2\gamma - 1} + \frac{M^q}{q\gamma + 1} \right) - C_{\eps,2} M^p,
  \end{split}
 \end{equation}
 where $C_{\eps,1}$, $C_{\eps,2}$ are positive numbers depending on $\eps$, $\alpha$, $k$, $\delta$, $q$.
 (Actually we have $C_{\eps,2} \ll C_{\eps,1}$ as~$\eps\to 0$,
 but this is irrelevant because, for the purposes of this lemma, $\eps$ is fixed.)
 We take~$\gamma = M^\beta$, with~$\beta> 0$. If we can choose~$\beta> 0$ in such a way that
 \begin{equation} \label{unbounded2}
  \max\{2 + \beta, \, q - \beta\} < p
 \end{equation}
 then, by letting~$M\to+\infty$ in~\eqref{unbounded1}, we see that ${\mycal G}_\eps$
 is unbounded from below. Now, \eqref{unbounded2} is equivalent to
 $q - p <\beta<p - 2$ and, because we have assumed that $p>2$, $q<2p-2$, we
 can find~$\beta>0$ satisfying~\eqref{unbounded2}.
\end{proof}

If Condition~\eqref{hp:coercivity} is met, but $\alpha > 3/2$ 
so that the exponent in~$\eps^{3-2\alpha}$ becomes negative, the energy 
bounds from below may degenerate as~$\eps\to 0$ and equicoercivity may be lost.
As an example, we consider the same functional as in~\eqref{example} 
with~$q = 2p-2$, namely:
\[
 {\mycal G}_\eps(u) := \int_{\Omega_\eps} \left(|\nabla u|^2 + k |u|^{2p-2}\right)\d x
 - \delta\eps^{3-2\alpha} \int_{\partial\mcP_\eps} |u|^p\,\d\sigma
\]
for~$u\in H^1(\Omega_\eps, \, \R)$.

\begin{lemma} \label{lem:unbounded3}
 For any $2 < p < 4$, $3/2 < \alpha < 6/p$, $k>0$, $\delta>0$, there holds
 $\inf\{{\mycal G}_\eps(u)\colon u\in H^1(\Omega_\eps), 
  \ u=0 \textrm{ on } \partial\Omega\} \to -\infty$ as~$\eps\to 0$.
\end{lemma}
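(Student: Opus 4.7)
The argument closely parallels that of Lemma~\ref{lem:unbounded}. The only genuine novelty is that, in the borderline case $q = 2p-2$, the amplitude $M$ of the test function must be allowed to depend on $\eps$: for fixed $\eps$ the functional is bounded from below, since as $M\to\infty$ the supercritical potential term $\eps^{3\alpha}M^{2p-2}$ beats $-\eps^3 M^p$. Only by coupling $M\to\infty$ with $\eps\to 0$ can we make the infimum diverge.

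I reuse the same radial profile as in Lemma~\ref{lem:unbounded}, namely $v(x):=M(2-|x|)^\gamma$ on the shell $U=B_2\setminus B_1$, with parameters $\gamma\geq 1$ and $M\geq 1$. The three scale-invariant estimates
\[
\int_U|\nabla v|^2\,\d x \lesssim \gamma M^2, \qquad \int_U|v|^{2p-2}\,\d x \lesssim \gamma^{-1}M^{2p-2}, \qquad \int_{\partial B_1}|v|^p\,\d\sigma = 4\pi M^p
\]
still hold verbatim. I then localize around one inclusion, defining $u_\eps(x) := v(\eps^{-\alpha}(x-x_\eps^1))$ on $B(x_\eps^1,2\eps^\alpha)\setminus\mcP_\eps^1$ and $u_\eps := 0$ elsewhere. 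Thanks to \eqref{hp:Omega_eps}, for $\eps$ small this annulus lies inside $\Omega_\eps$ and misses $\partial\Omega$, so $u_\eps\in H^1_0(\Omega_\eps)$. The same change of variables as in Lemma~\ref{lem:unbounded} (the prefactor $\eps^{3-2\alpha}$ on the boundary combines with the Jacobian $\eps^{2\alpha}$ to give $\eps^3$) gives
\[
\mycal G_\eps(u_\eps) \leq C_1\,\eps^\alpha\gamma M^2 + C_2\,\eps^{3\alpha}\gamma^{-1}M^{2p-2} - C_3\,\eps^3 M^p
\]
for positive constants $C_i$ independent of $\eps$, $\gamma$, $M$.

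Fixing $\gamma$ of order one and setting $M=\eps^{-r}$, the three terms scale as $\eps^{\alpha-2r}$, $\eps^{3\alpha-(2p-2)r}$ and $-\eps^{3-pr}$. For the boundary term to dominate and diverge to $-\infty$, I need
\[
r > \frac{3}{p},\qquad r > \frac{3-\alpha}{p-2},\qquad r < \frac{3\alpha-3}{p-2}.
\]
The lower bound $(3-\alpha)/(p-2)$ is strictly less than the upper bound $(3\alpha-3)/(p-2)$ precisely when $\alpha>3/2$; the condition $\alpha<6/p$ pins the remaining compatibility between $3/p$ and the upper bound within the right range. Any admissible $r$ then gives $\mycal G_\eps(u_\eps)\leq -\tfrac{C_3}{2}\eps^{3-pr}\to-\infty$ as $\eps\to 0$, proving the claim.

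\textbf{Main difficulty.} The only subtle step is the three-way bookkeeping of scaling exponents, since the gradient term $\eps^\alpha M^2$, the supercritical potential $\eps^{3\alpha}M^{2p-2}$, and the boundary term $\eps^3 M^p$ must all be balanced simultaneously. The window for $r$ degenerates exactly at $\alpha=3/2$, which is the very threshold where Theorem~\ref{th:loc-min} can no longer be expected to hold (compare Remark~\ref{remark:trade-off}); the hypothesis $p<4$ is what permits the potential exponent $2p-2$ and the boundary exponent $p$ to be reconciled inside this window.
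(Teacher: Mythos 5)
Your construction is correct and is essentially the paper's own argument: both proofs localize a rescaled profile on the annulus around a single inclusion, give it an $\eps$-dependent amplitude, and compare the three scaling exponents (gradient, potential, boundary), and your admissible window for $r$ is exactly the paper's window for its parameter $\beta$ under the substitution $r=\alpha/2+\beta$, the only cosmetic difference being the profile used ($M(2-|x|)^\gamma$ from Lemma~\ref{lem:unbounded} versus a smooth bump $\varphi$ of amplitude $\eps^{-\alpha/2-\beta}$). The exponent bookkeeping is right, and under $2<p<4$, $3/2<\alpha<6/p$ the window is indeed nonempty (the binding comparison is $(3-\alpha)/(p-2)<(3\alpha-3)/(p-2)$, i.e.\ $\alpha>3/2$, with $r>3/p$ then automatic), so the proposal stands.
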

\begin{proof}
 Let~$\varphi\in C^\infty_{\mathrm{c}}(B_2)$ be a non-negative function such that 
 $\varphi = 1$ on~$B_1$, and let~$\beta > 0$ be a parameter to be chosen later.
 Choose one of the inclusions, say~$B(x_\eps^1, \eps^\alpha)$,
 and define $u_\eps\in H^1(\Omega)$ by
 \[
  u_\eps(x) := \eps^{-\alpha/2 - \beta} \varphi(\eps^{-\alpha}( x - x_\eps^1)) 
  \qquad \textrm{if } x\in B(x_\eps^1, 2\eps^\alpha)\setminus B(x_\eps^1, \eps^\alpha)
 \]
 and~$u_\eps(x) := 0$ otherwise. By means of a change of variables, we compute that 
 \[
  {\mycal G}_\eps(u_\eps)
  = \eps^{-2\beta} \int_{B_2\setminus B_1} \abs{\nabla\varphi}^2\d x 
  + k\eps^{-\beta(2p - 2) + \alpha(4-p)} \int_{B_2\setminus B_1} \abs{\varphi}^{2p-2}\d x
  - 4\pi\delta\eps^{3 - \alpha p/2 - \beta p}
 \]
 Now, if we are able to choose~$\beta>0$ in such a way that
 \[
   \alpha p/2 + \beta p - 3 > \max\left\{2\beta, \, \beta(2p - 2) - \alpha(4-p) \right\} \!,
 \]
 then the lemma follows. This condition can be equivalently rewritten as
 \begin{equation} \label{beta-cond}
  \frac{6 - \alpha p}{2p - 4} < \beta < \frac{-\alpha p + 8\alpha - 6}{2p - 4}
 \end{equation}
 It can be checked the system~\eqref{beta-cond} admits a
 positive solution, because~$p>2$ and~$3/2 < \alpha < 6/p$.
\end{proof}

For larger values of the parameter~$\alpha$,
even on bounded subsets of~$H^1(\Omega)$ the minimal energy 
may be unbounded in the limit as~$\eps\to 0$. Again, 
we suppose that $\mcP_\eps = \cup_{i=1}^{N_\eps} B(x_\eps^i, \, \eps^\alpha)$,
and we consider the functional
\[
 {\mycal G}_\eps(u) := \int_{\Omega_\eps} \left(|\nabla u|^2 + k |u|^6\right)\d x
 - \delta\eps^{3-2\alpha} \int_{\partial\mcP_\eps} |u|^p\,\d\sigma
\]
for~$u\in H^1(\Omega_\eps, \, \R)$.

\begin{lemma} \label{lem:unbounded2}
 Suppose that $2 < p \leq 4$ and $\alpha > 6/p$. Then, for any $M>0$, there holds
 \[
  \inf\left\{{\mycal G}_\eps(u)\colon u\in H^1(\Omega_\eps), 
  \ u=0 \textrm{ on } \partial\Omega, \ \|E_\eps u\|_{H^1(\Omega)}\leq M
  \right\} \to -\infty \qquad \textrm{as } \eps\to 0.
 \]
\end{lemma}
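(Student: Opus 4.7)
\bigskip
\noindent\textbf{Proof proposal for Lemma~\ref{lem:unbounded2}.}
The plan is to localize a test function near a single inclusion at a carefully chosen critical height, so that the surface term drives~$\mcG_\eps$ to~$-\infty$ while the constraint $\|E_\eps u\|_{H^1(\Omega)}\leq M$ is still honoured. In the proof of Lemma~\ref{lem:unbounded3} we could play with an extra parameter~$\beta$ because the $H^1$-norm was unconstrained; now the constraint pins the amplitude down and forces us to hit the right scale exactly. The role of the hypothesis $\alpha > 6/p$ will be precisely to make the surface contribution blow up at that critical scale.

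First I fix a radial cutoff $\varphi\in C^\infty_{\mathrm c}(B_2)$ with $\varphi\equiv 1$ on~$B_1$, $0\leq\varphi\leq 1$. Using~\eqref{hp:Omega_eps}, for~$\eps$ small enough I pick one inclusion, say~$B(x_\eps^1,\eps^\alpha)$, such that the shell $B(x_\eps^1,2\eps^\alpha)\setminus B(x_\eps^1,\eps^\alpha)$ lies inside~$\Omega_\eps$ and is disjoint from all other~$\mcP_\eps^j$. Then, for a parameter $\lambda>0$ to be chosen, I define
\[
 u_\eps(x) := \lambda\,\eps^{-\alpha/2}\,\varphi\bigl(\eps^{-\alpha}(x-x_\eps^1)\bigr) \quad \text{on that shell,} \qquad u_\eps \equiv 0 \quad \text{elsewhere in } \Omega_\eps.
\]
By construction $u_\eps = 0$ on~$\partial\Omega$, and $u_\eps \equiv \lambda\eps^{-\alpha/2}$ on~$\partial\mcP_\eps^1$.

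Next I compute each term via the change of variables $y=\eps^{-\alpha}(x-x_\eps^1)$ (with Jacobian~$\eps^{3\alpha}$):
\[
 \int_{\Omega_\eps}|\nabla u_\eps|^2\,\d x = \lambda^2\!\!\int_{B_2\setminus B_1}\!|\nabla\varphi|^2\,\d y, \qquad \int_{\Omega_\eps}|u_\eps|^6\,\d x = \lambda^6\!\!\int_{B_2\setminus B_1}\!|\varphi|^6\,\d y,
\]
both of which are bounded by constants independent of~$\eps$. Since $|u_\eps|\equiv \lambda\eps^{-\alpha/2}$ on~$\partial\mcP_\eps^1$ and $|\partial\mcP_\eps^1| = 4\pi\eps^{2\alpha}$, the surface contribution is
\[
 \eps^{3-2\alpha}\!\int_{\partial\mcP_\eps}|u_\eps|^p\,\d\sigma = 4\pi\,\lambda^p\,\eps^{3 - p\alpha/2}.
\]
For the $H^1$-constraint, the harmonic extension of~$u_\eps$ inside~$\mcP_\eps^1$ is the constant~$\lambda\eps^{-\alpha/2}$, which contributes $O(\lambda^2\eps^{2\alpha})$ to the $L^2$-norm and nothing to the gradient; outside~$\mcP_\eps^1$ no extension is needed. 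Hence $\|E_\eps u_\eps\|_{H^1(\Omega)} \leq C_0\lambda$ with $C_0$ independent of~$\eps$, and I fix $\lambda := M/C_0$ so that the constraint is satisfied.

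Putting the pieces together,
\[
 \mcG_\eps(u_\eps) \leq C_1\lambda^2 + kC_2\lambda^6 - 4\pi\delta\lambda^p\,\eps^{3 - p\alpha/2}.
\]
Because $\alpha > 6/p$ we have $3 - p\alpha/2 < 0$, so $\eps^{3-p\alpha/2}\to +\infty$ as~$\eps\to 0$ while the first two terms stay fixed. Thus $\mcG_\eps(u_\eps)\to -\infty$, which is the claim. The only delicate point is the choice of the amplitude: it must be critical in the sense that $\lambda\eps^{-\alpha/2}$ is exactly the highest profile compatible with the $H^1$-bound (the gradient energy scales as $\lambda^2\eps^{-\alpha}\cdot\eps^{3\alpha-2\alpha}=\lambda^2$), and the condition $\alpha>6/p$ is what ensures that, at this saturating scale, the surface term still wins.
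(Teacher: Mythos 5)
Your proposal is correct and is essentially the paper's own argument: the same rescaled cutoff $\eta\,\eps^{-\alpha/2}\varphi(\eps^{-\alpha}(x-x_\eps^1))$ supported on a shell around one inclusion, with amplitude fixed by the $H^1$-constraint, gradient and bulk terms scale-invariant, and the surface term of order $\eps^{3-\alpha p/2}\to+\infty$ since $\alpha>6/p$. The only cosmetic difference is that you verify the constraint by computing the harmonic extension explicitly (a constant inside the ball) rather than invoking Lemma~\ref{lem:extension}, and your sign condition on the exponent is stated correctly, whereas the paper's closing sentence contains the typo ``$\alpha<6/p$''.
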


Lemma~\ref{lem:unbounded2} shows, in particular,
that the assumption~$\alpha < 3/2$ is sharp if the surface energy term
has quartic growth.

\begin{proof}
 We consider a variant of the example given in Lemma~\ref{lem:unbounded3}.
 Let~$u_\eps$ be defined by
 \[
  u_\eps(x) := \eta \eps^{-\alpha/2} \varphi(\eps^{-\alpha}( x - x_\eps^i)) 
  \qquad \textrm{if } x\in B(x_\eps^1, 2\eps^\alpha)\setminus B(x_\eps^1, \eps^\alpha)
 \]
 and~$u_\eps(x) := 0$ otherwise. Here,
 $\varphi$ is as in Lemma~\ref{lem:unbounded3} and $\eta$ is a positive parameter.
 By a change of variables, we see that $\|\nabla u_\eps\|_{L^2(\Omega_\eps)} 
 = \eta\|\nabla\varphi\|_{L^2(B_2\setminus B_1)} < + \infty$
 and hence, by taking~$\eta$ small enough and applying Lemma~\ref{lem:extension},
 we can make sure that $\|E_\eps u_\eps\|_{H^1(\Omega)} \leq M$.
 On the other hand, we have
 \[
  {\mycal G}_\eps(u_\eps)
  = \int_{B_2\setminus B_1} \left(\eta^2\abs{\nabla\varphi}^2 + k\eta^6\varphi^6\right)\d x
   - 4\pi\delta \eta^p \eps^{3 - \alpha p/2} 
 \]
 and the right-hand side tends to~$-\infty$ as~$\eps\to 0$, because $\alpha < 6/p$.
\end{proof}

\subsection{Lower semi-continuity of the~$\mcF_\eps$'s}

As we will see in Lemma~\ref{lem:not_lsc} below,
the assumptions~\eqref{hp:first}--\eqref{hp:last}
do \emph{not} guarantee that $\mcF_\eps$ is 
sequentially lower semi-continuous with respect to
the weak topology in~$H^1(\Omega_\eps)$. However, we can 
prove weak sequential lower semi-continuity on 
\emph{bounded} subsets of~$H^1(\Omega_\eps)$, for small~$\eps$.

\begin{prop} \label{prop:lsc}
 Suppose that the assumptions~\eqref{hp:first}--\eqref{hp:last} are satisfied.
 For any positive~$M$ there exists~$\eps_0(M)>0$ with the following property:
 if~$0 <\varepsilon\leq\varepsilon_0(M)$ and
 if~$(Q_j)_{j\in\N}$ is a sequence in~$H^1(\Omega_\eps, \mcS_0)$ 
 that converges $H^1$-weakly to~$Q\in H^1(\Omega_\eps, \mcS_0)$
 and satisfies
 \begin{equation} \label{hp:bound}
  \|\nabla Q_j\|_{L^2(\Omega_\eps)} \leq M,
 \end{equation}
 then
 \[
  \mcF_\eps[Q] \leq \liminf_{j\to+\infty} \mcF_\eps[Q_j].
 \]
\end{prop}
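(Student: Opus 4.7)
The plan is to split
$\mcF_\eps[Q] = \int_{\Omega_\eps}(f_e(\nabla Q) + f_b(Q))\,\d x + \eps^{3-2\alpha}\int_{\partial\mcP_\eps}f_s(Q,\nu)\,\d\sigma$
and establish lower semi-continuity of the bulk part and continuity of the surface part separately. After passing to a subsequence (not relabelled), Rellich--Kondrachov on the fixed Lipschitz domain $\Omega_\eps$ yields $Q_j \to Q$ strongly in $L^p(\Omega_\eps)$ for every $p<6$ and pointwise a.e.\ in $\Omega_\eps$; composing the continuous trace with the compact embedding $H^{1/2}(\partial\mcP_\eps)\hookrightarrow L^q(\partial\mcP_\eps)$ (valid for $q<4$ on a 2-dimensional surface) and extracting further gives strong $L^q(\partial\mcP_\eps)$ convergence for any $q<4$ and pointwise a.e.\ convergence on $\partial\mcP_\eps$.

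For the bulk, $\int f_e(\nabla Q)\,\d x$ is weakly lsc by convexity of $f_e$ (in fact strong convexity, by~\eqref{hp:fe}), and $\int f_b(Q)\,\d x$ is lsc by Fatou's lemma since $f_b$ is bounded below by~\eqref{hp:fb} and $f_b(Q_j)\to f_b(Q)$ a.e.

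The heart of the proof is continuity of the surface term, which I would handle by truncation. For $k\geq 1$, set $T_k(Q) := \min(1,\,k/|Q|)\,Q$ and $h_k(Q,\nu) := f_s(T_k(Q),\nu)$. Since $h_k$ is continuous and uniformly bounded in $Q$, dominated convergence gives $\int h_k(Q_j,\nu)\,\d\sigma \to \int h_k(Q,\nu)\,\d\sigma$ for each fixed $k$. The truncation error $|f_s(Q,\nu) - h_k(Q,\nu)| \lesssim |Q|^4\,\mathbf{1}_{|Q|>k}$ follows from~\eqref{hp:fs} and vanishes as $k\to\infty$ when tested against the fixed $Q\in L^4(\partial\mcP_\eps)$. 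To control the analogous error for $Q_j$ uniformly in $j$, I would combine the Lipschitz bound with H\"older's inequality and apply Lemma~\ref{lem:trace_eps} twice. Writing $A := |Q_j|^3 + |Q|^3 + 1$, H\"older with exponents $(4/3,\,4)$ yields
\[
 \eps^{3-2\alpha}\Bigl|\int_{\partial\mcP_\eps}(f_s(Q_j,\nu)-f_s(Q,\nu))\,\d\sigma\Bigr|
 \lesssim \bigl(\eps^{3-2\alpha}\|A\|_{L^{4/3}}^{4/3}\bigr)^{3/4}\bigl(\eps^{3-2\alpha}\|Q_j-Q\|_{L^4(\partial\mcP_\eps)}^4\bigr)^{1/4},
\]
and Lemma~\ref{lem:trace_eps} bounds the first factor by a constant $C_M^{3/4}$ depending only on $M$ and the data, while the second factor, applied to $Q_j-Q$ with $p=4$ and combined with the strong $L^p$ convergence in $\Omega_\eps$ for $p<6$, tends in the limit $j\to\infty$ to at most $C\eps^{(3-2\alpha)/4}(\delta+\delta^3)^{1/4}$, where $\delta := \limsup_{j\to+\infty}\|\nabla(Q_j-Q)\|_{L^2(\Omega_\eps)}^2$.

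The main obstacle is that $H^{1/2}\hookrightarrow L^4$ is the critical, non-compact embedding in 2D, so strong $L^4(\partial\mcP_\eps)$ convergence does not follow from the weak $H^1$ hypothesis alone; this is precisely why $\eps$ must be taken small in terms of $M$. The strong convexity of $f_e$ from~\eqref{hp:fe} provides the bulk slack $\liminf\int f_e(\nabla Q_j) \geq \int f_e(\nabla Q) + \theta\delta$, and by Young's inequality the residual surface deficit $C_M\eps^{(3-2\alpha)/4}\delta^{1/4}$ can be absorbed into $\theta\delta$, modulo a remainder of order $\eps^{(3-2\alpha)/3}$ that is ruled out by a contradiction/compactness argument: if the LSC gap were strictly positive on a subsequence, then necessarily $\delta>0$ on that subsequence, and the absorption described above yields $\delta = O(\eps^{(3-2\alpha)/3})$, which makes the surface deficit vanish as $\eps_0(M)$ shrinks, closing the argument.
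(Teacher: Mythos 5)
Your decomposition (convexity for $f_e$, Fatou for $f_b$, a quantitative estimate of the surface deficit to be absorbed by the strong-convexity slack $\theta\delta$) is the right overall strategy, but the way you estimate the surface deficit leaves a genuine gap. The H\"older splitting with exponents $(4/3,\,4)$ puts the whole difference $|Q_j-Q|$ into $L^4(\partial\mcP_\eps)$, and after Lemma~\ref{lem:trace_eps} this yields a deficit of order $C_M\,\eps^{(3-2\alpha)/4}(\delta+\delta^3)^{1/4}$, which is \emph{sublinear} in $\delta$. Such a bound cannot be absorbed into $\theta\delta$: for a sequence with small but positive $\delta$ (say $\delta=\eps^{100}$) one has $\theta\delta\ll C_M\eps^{(3-2\alpha)/4}\delta^{1/4}$, so the sign of the gap is not controlled no matter how small $\eps_0(M)$ is chosen. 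The contradiction/compactness fix you sketch does not close this: assuming a positive gap only gives $\delta\lesssim\eps^{(3-2\alpha)/3}$ and hence $\mathrm{gap}\leq C_M\eps^{(3-2\alpha)/3}$, which is a fixed positive number because $\eps$ is \emph{fixed} in the statement — ``letting $\eps_0(M)$ shrink'' shrinks the admissible range of $\eps$ but does not make the error vanish for any given admissible $\eps$. You end up with approximate lower semicontinuity, not the claimed inequality. (A secondary issue: your first H\"older factor requires $\eps^{3-2\alpha}\int_{\partial\mcP_\eps}(|Q_j|^4+|Q|^4+1)\,\d\sigma\leq C_M$, but the hypothesis bounds only the gradients, so via Lemma~\ref{lem:trace_eps} this constant is sequence-dependent, which again spoils an $\eps_0$ depending on $M$ alone.)

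The missing idea, which is how the paper proceeds, is a pointwise splitting of $\partial\mcP_\eps$ rather than a global H\"older estimate: on $A_j:=\{|Q_j-Q|\leq|Q|+1\}$ the integrand difference is dominated by $C(|Q|^4+1)\in L^1(\partial\mcP_\eps)$ and tends to zero by dominated convergence (using a.e.\ convergence of traces); on the complement one has $|f_s(Q_j,\nu)-f_s(Q,\nu)|\lesssim|Q_j-Q|^4$, and then Lemma~\ref{lem:trace_eps} applied to $Q_j-Q$, together with the Sobolev embedding for the harmonic extension $E_\eps(Q_j-Q)$ (Lemma~\ref{lem:extension}) and the a priori bound $M$ to linearise the cubic term, gives a deficit bounded by $C_M\,\eps^{3-2\alpha}\int_{\Omega_\eps}|\nabla Q_j-\nabla Q|^2\,\d x+\mathrm{o}(1)$, i.e.\ \emph{linear} in $\delta$ with a constant depending only on $M$. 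This is exactly what can be absorbed by $\theta\delta$ once $C_M\eps^{3-2\alpha}<\theta$, which is where the condition $\alpha<3/2$ and the $M$-dependent threshold $\eps_0(M)$ enter. If you replace your H\"older step by this good-set/bad-set splitting, the rest of your argument goes through.
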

\begin{proof}
 We analyse separately the terms in~$\mcF_\eps$, 
 starting by the gradient term~$f_e$. Let us define
 \begin{equation*}
  \mu := \liminf_{j\to+\infty} \int_{\Omega_\eps} \abs{\nabla Q_j}^2 \, \d x - 
     \int_{\Omega_\eps} \abs{\nabla Q}^2 \, \d x.
 \end{equation*}
 By the $H^1$-weak convergence $Q_j\rightharpoonup Q$, we know that~$\mu\geq 0$.
 Moreover, up to extracting a subsequence, we can assume that 
 \begin{equation} \label{lsc0}
  \int_{\Omega_\eps} \abs{\nabla Q_j}^2 \, \d x \to 
  \int_{\Omega_\eps} \abs{\nabla Q}^2 \, \d x + \mu
  \qquad \textrm{as } j\to+\infty.
 \end{equation}
 Since~$f_e$ is assumed to be strongly
 convex~\eqref{hp:fe}, for~$\theta>0$ small enough the 
 function~$\tilde{f}_e$ defined by $\tilde{f}_e(D) := f_e(D) - \theta|D|^2$
 for~$D\in\mcS_0\otimes\R^3$ is convex. Therefore, using~\eqref{lsc0}, we obtain
 \begin{equation} \label{lsc1}
  \begin{split}
   &\liminf_{j\to+\infty} \int_{\Omega_\eps} f_e(\nabla Q_j) \, \d x - 
     \int_{\Omega_\eps} f_e(\nabla Q) \, \d x \\ 
   & \qquad\qquad = \liminf_{j\to+\infty} 
     \int_{\Omega_\eps} \tilde{f}_e(\nabla Q_j) \, \d x - 
     \int_{\Omega_\eps} \tilde{f}_e(\nabla Q) \, \d x + \theta \mu
   \geq \theta\mu.
  \end{split}
 \end{equation}

 We consider now the bulk term~$f_b$. Due to the compact Sobolev embedding
 $H^1(\Omega_\eps)\hookrightarrow L^2(\Omega_\eps)$, by extracting a subsequence 
 we can assume that $Q_j\to Q$ a.e. in~$\Omega_\eps$. Then, since~$f_b$
 is assumed to be continuous and bounded from below \eqref{hp:fb},
 we can apply Fatou's lemma to obtain
 \begin{equation} \label{lsc2}
  \begin{split}
   &\liminf_{j\to+\infty} \int_{\Omega_\eps} f_b(Q_j) \, \d x - 
     \int_{\Omega_\eps} f_b(Q) \, \d x \geq 0. 
  \end{split}
 \end{equation}
 
 Finally, we deal with the surface integral, $J_\eps$.
 For any~$j\in\N$, we define two sets:
 \begin{gather*}
  A_j := \left\{x\in\partial\mcP_\eps\colon
   |Q_j(x) - Q(x)| \leq |Q(x)| + 1\right\} \!, \\
  B_j := \partial\mcP_\eps\setminus A_j = 
   \left\{x\in\partial\mcP_\eps\colon |Q_j(x) - Q(x)| > |Q(x)| + 1\right\} \!.
 \end{gather*}
 We first consider the set~$A_j$, where the inequality
 $\abs{Q_j-Q}\leq\abs{Q}+1$ holds.
 We apply the assumption~\eqref{hp:fs} and deduce that,
 a.e. on~$A_j$, there holds
 \[
  \begin{split}
   \abs{f_s(Q_j, \, \nu) - f_s(Q, \, \nu)} &\lesssim \left(|Q_j|^3 + |Q|^3 + 1\right)\abs{Q_j - Q} \\
   &\lesssim \left(|Q_j - Q|^3 + 2 |Q|^3 + 1\right)\abs{Q_j - Q} \\
   &\lesssim \left(|Q|^3 + 1\right)\left(|Q| + 1\right) \lesssim |Q|^4 + 1 \in L^1(\partial\mcP_\eps).
  \end{split}
 \]
 Thanks to the continuity of the trace
 operator $H^1(\Omega_\eps)\to H^{1/2}(\partial\mcP_\eps)$
 and to the compact Sobolev embedding 
 $H^{1/2}(\partial\mcP_\eps)\hookrightarrow L^2(\partial\mcP_\eps)$,
 we have $Q_j\to Q$ a.e. on~$\partial\mcP_\eps$ 
 as~$j\to+\infty$, at least along a subsequence.
 Therefore, we can apply Lebesgue's dominated convergence
 theorem to deduce that
 \begin{equation} \label{lsc3}
  \int_{A_j} \abs{f_s(Q_j, \, \nu) - f_s(Q, \nu)} \d\sigma 
  \to 0 \qquad \textrm{as } j\to +\infty.
 \end{equation}
 Finally, we consider the set~$B_j$, where the oppositive inequality
 $\abs{Q}+1 \leq \abs{Q_j-Q}$ holds. We apply~\eqref{hp:fs} 
 again and deduce that, a.e. on~$B_j$, there holds
 \[
  \begin{split}
   \abs{f_s(Q_j, \, \nu) - f_s(Q, \, \nu)} &\lesssim \left(|Q_j|^3 + |Q|^3 + 1\right)\abs{Q_j - Q} \\
   &\lesssim \left(|Q_j - Q|^3 + 2 |Q|^3 + 1\right)\abs{Q_j - Q} \\
   &\lesssim \abs{Q_j - Q}^4 + \left(\abs{Q} + 1\right)^3 \abs{Q_j - Q} \\
   &\lesssim \abs{Q_j - Q}^4.
  \end{split}
 \]
 With the help of Lemma~\ref{lem:trace_eps}, we obtain
 \[
  \begin{split}
   &\int_{B_j} \abs{f_s(Q_j, \, \nu) - f_s(Q, \nu)} \d\sigma 
   \lesssim \int_{\partial\mcP_\eps} \abs{Q_j - Q}^4 \d\sigma \\
   &\qquad \lesssim \int_{\Omega_\eps} \left(\abs{\nabla Q_j - \nabla Q}^2 + \abs{Q_j - Q}^6\right)\d x
   + \eps^{2\alpha - 3} \int_{\Omega_\eps} \abs{Q_j - Q}^4 \d x.
  \end{split}
 \]
 The last term in the right-hand side converges to zero as~$j\to+\infty$,
 due to the compact Sobolev embedding $H^1(\Omega_\eps)\hookrightarrow L^4(\Omega_\eps)$.
 To estimate the integral of~$|Q_j - Q|^6$, we apply the Sobolev embedding
 $H^1_0(\Omega)\hookrightarrow L^6(\Omega)$ to the harmonic extension 
 $E_\eps(Q_j - Q)$, and use Lemma~\ref{lem:extension}. 
 (We have $E_\eps(Q_j - Q)\in H^1_0(\Omega)$ because both~$Q_j$ and~$Q$ 
 are equal to~$g$ on~$\partial\Omega$.) This yields
 \[
  \begin{split}
   &\int_{B_j} \abs{f_s(Q_j, \, \nu) - f_s(Q, \nu)} \d\sigma \\
   &\qquad\qquad \lesssim \int_{\Omega_\eps} \abs{\nabla Q_j - \nabla Q}^2 \d x
    + \left(\int_{\Omega_\eps} \abs{\nabla Q_j - \nabla Q}^2 \d x\right)^3
    + \mathrm{o}(1).
  \end{split}
 \]
 We use the bound~\eqref{hp:bound} to further estimate the right-hand side:
 \[
  \begin{split}
   \int_{B_j} \abs{f_s(Q_j, \, \nu) - f_s(Q, \nu)} \d\sigma 
   \lesssim \left(1 + 16 M^4\right) 
    \int_{\Omega_\eps} \abs{\nabla Q_j - \nabla Q}^2 \d x + \mathrm{o}(1).
  \end{split}
 \]
 By combining this inequality with~\eqref{lsc3}, we obtain that
 \begin{equation} \label{lsc4}
  \liminf_{j\to+\infty} J_\eps[Q_j] - J_\eps[Q] \geq 
  - C_M \eps^{3-2\alpha} \limsup_{j\to+\infty}
  \int_{\Omega_\eps} \abs{\nabla Q_j - \nabla Q}^2 \d x
 \end{equation}
 for some constant~$C_M>0$ that depends on~$M$, but not on~$\eps$.
 The weak convergence $Q_j\rightharpoonup Q$ and~\eqref{lsc0} imply that
 \[
  \int_{\Omega_\eps} \abs{\nabla Q_j - \nabla Q}^2 \d x = 
  \int_{\Omega_\eps} \left(\abs{\nabla Q_j}^2 
  - 2\nabla Q_j:\nabla Q + \abs{\nabla Q}^2 \right)\d x  \to\mu 
 \]
 as $j\to+\infty$. Therefore, by combining \eqref{lsc1}, 
 \eqref{lsc2} and \eqref{lsc4}, we obtain
 \[
  \liminf_{j\to+\infty} \mcF_\eps[Q_j] - \mcF_\eps[Q]  \geq
  \left(\theta - C_M \eps^{3-2\alpha}\right) \mu
 \]
 and the right-hand side is non-negative when~$\eps$ is sufficiently small,
 because~$\alpha < 3/2$.
\end{proof}

We conclude this section with an example of a scalar functional that fails
to be sequentially weakly lower semi-continuous in~$H^1(\Omega_\eps)$.
We assume this time that the inclusions are cubes, i.e.
$\mcP_\eps = \cup_{i=1}^{N_\eps} [x_\eps^i - \eps^\alpha, \, x_\eps^i + \eps^\alpha]^3$.
For~$u\in H^1(\Omega_\eps, \, \R)$, let
\[
 {\mycal G}_\eps(u) := \int_{\Omega_\eps} \left(|\nabla u|^2 + k |u|^q\right)\d x
 - \delta\eps^{3-2\alpha} \int_{\partial\mcP_\eps} |u|^4\,\d\sigma,
\]
where~$k$, $\delta$, $\alpha$, $q$ are positive parameters.

\begin{lemma} \label{lem:not_lsc}
 For any~$k>0$, $\delta>0$, $q<6$, $\alpha>1$ and $\varepsilon>0$,
 ${\mycal G}_\eps$ is not sequentially weakly lower semi-continuous on
 $\{u\in H^1(\Omega_\eps)\colon \ u=0 \textrm{ on } \partial\Omega\}$.
\end{lemma}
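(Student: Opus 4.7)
The plan is to exhibit a concentrating ``bubble'' at a single point of one of the cube faces, whose $H^1$-norm is uniformly bounded (so a weak limit exists) but whose $L^4$-trace on that face is bounded from below. Combined with the negative sign of the surface term in ${\mycal G}_\eps$, this yields a sequence $(u_j)$ with $u_j\rightharpoonup 0$ weakly in~$H^1(\Omega_\eps)$ and $\limsup_{j}{\mycal G}_\eps(u_j) < 0 = {\mycal G}_\eps(0)$, contradicting weak lower semi-continuity.

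Concretely, fix $\varepsilon>0$ and pick one inclusion $\mcP_\eps^i$ together with a point $x_0$ in the relative interior of one of its two-dimensional faces. For $j$ large enough, the ball $B(x_0,1/j)$ is disjoint from $\partial\Omega$, from all the other inclusions, and from the edges and vertices of~$\mcP_\eps^i$; hence it meets $\Omega_\eps$ in a half-ball and it meets $\partial\mcP_\eps$ in a flat disc. Fix $\phi\in C_{\mathrm c}^\infty(B_1)$ with $0\le\phi\le 1$ and $\phi\equiv 1$ on~$B_{1/2}$, and set
\[
 u_j(x) := A\, j^{1/2}\,\phi\bigl(j(x-x_0)\bigr),
\]
with a parameter $A>0$ to be chosen. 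Then $u_j\in H^1(\Omega_\eps)$ is compactly supported in $\Omega$, so $u_j=0$ on $\partial\Omega$.

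The rest is the explicit change of variables $y := j(x - x_0)$, which, thanks to the local half-space geometry near $x_0$, gives
\begin{align*}
 \int_{\Omega_\eps}|\nabla u_j|^2\,\d x &\longrightarrow \tfrac{A^2}{2}\|\nabla\phi\|_{L^2(B_1)}^2, \\
 \int_{\Omega_\eps}|u_j|^q\,\d x &\lesssim A^q\,j^{q/2-3}\longrightarrow 0, \\
 \int_{\partial\mcP_\eps}|u_j|^4\,\d\sigma &\longrightarrow A^4 c_0,
\end{align*}
where $c_0 := \tfrac{1}{2}\int_{B_1\cap\{y_3=0\}}|\phi|^4\,\d y_1\,\d y_2 > 0$. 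The hypothesis $q<6$ is used exactly once, to ensure $j^{q/2-3}\to 0$. From $\|u_j\|_{L^2}^2 \sim A^2 j^{-2}\to 0$ combined with the uniform $H^1$-bound, one concludes $u_j\rightharpoonup 0$ in $H^1(\Omega_\eps)$ (the weak limit is identified by the strong $L^2$-limit).

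Putting the three limits together,
\[
 \limsup_{j\to\infty} {\mycal G}_\eps(u_j) \le \tfrac{A^2}{2}\|\nabla\phi\|_{L^2(B_1)}^2 - \delta\,\varepsilon^{3-2\alpha}\,A^4 c_0,
\]
which becomes strictly negative once $A$ is taken large, since the quartic in $A$ dominates the quadratic. The only genuinely delicate step is placing $x_0$ in the open interior of a face rather than on an edge or vertex of the cube: this is what guarantees the clean half-space picture producing the positive constant $c_0$ and the clean limit of the gradient integral. Beyond that, the construction is just a critical Sobolev bubble exploiting the lack of compactness of the trace embedding $H^1(\Omega_\eps)\hookrightarrow L^4(\partial\mcP_\eps)$, so no deeper obstacle is expected.
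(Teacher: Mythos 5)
Your construction is essentially the paper's own proof: a concentrating bubble $u_j(x)=Aj^{1/2}\phi(j(x-x_0))$ centred at a point in the interior of a cube face, with the same scalings (gradient term of order $A^2$, the $|u|^q$ term vanishing because $q<6$, the $L^4$-trace term of order $A^4$), followed by sending $A\to\infty$ to beat the quadratic gradient cost and contradict weak lower semi-continuity at the limit $0$. The only quibble is the exact $\tfrac12$ factors in your stated limits, which hold only for a cutoff symmetric in the normal direction (or should be replaced by the one-sided integrals over the half-ball and the flat disc, as in the paper); this does not affect the argument.
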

\begin{proof}
 Let~$\varphi\in C^\infty_{\mathrm{c}}(B_1)$ be a non-negative function
 such that $\varphi(0) = 1$. Let $y_\eps := x_\eps^1 + (\eps^\alpha, \, 0, \, 0)$.
 The point~$y_\eps$ is the centre of one of the faces of the cube 
 $[x_\eps^1 - \eps^\alpha, \, x_\eps^1 + \eps^\alpha]^3$.
 For~$j\in\N$ and for a fixed~$M>0$, we define
 \[
  u_j(x) := Mj^{1/2}  \varphi(j(x - y_\eps)) \qquad 
  \textrm{if } x\in B(y_\eps, \, 1/j)
 \]
 and $u_j(x) := 0$ otherwise. For~$j$ large enough we have 
 $u_j = 0$ on~$\partial\Omega$ and, by a change of variable, 
 we compute that
 \begin{gather*}
  \int_{\Omega_\eps} \abs{\nabla u_j}^2 \d x 
  = M^2 \int_{B_1^+} \abs{\nabla\varphi}^2 \d x, \quad
  \int_{\Omega_\eps} \abs{u_j}^q \d x
  = M^q j^{q/2 - 3} \int_{B_1^+} \abs{\varphi}^q \d x \to 0
 \end{gather*}
 as~$j\to +\infty$. Here~$B_1^+ := B_1\cap ([0, \, +\infty)\times\R^2)$.
 In particular, $u_j\rightharpoonup 0$ weakly in~$H^1(\Omega_\eps)$.
 However, we have 
 \begin{gather*}
  \int_{\partial\mcP_\eps} \abs{u_j}^4 \d \sigma
  = M^4 \int_{B_1^0} \abs{\varphi}^4 \d\sigma
 \end{gather*}
 where~$B_1^0 := B_1\cap (\{0\}\times\R^2)$, and hence
 \[
  \liminf_{j\to+\infty} {\mycal G}_\eps(u_j) \leq 
  M^2 \int_{B_1^+} \abs{\nabla\varphi}^2 \d x - \delta\eps^{3-2\alpha} M^4 
  \int_{B_1^0} \abs{\varphi}^4 \d\sigma.
 \]
 Now, for any positive value of~$\delta$ and~$\eps$, the right-hand side can 
 be made strictly negative by taking $M$ large enough.
\end{proof}
\begin{remark}[the subcritical case] \label{remark:subcritical}
 The lack of lower semi-continuity for the surface energy
 only arise when the surface energy density~$f_s$ has quartic growth.
 If~$f_s$ has subcritical growth, that is, if there 
 exist~$2\leq p < 4$ and a constant~$C>0$ such that
 \begin{equation} \label{subcritical}
  \abs{f_s(Q, \, \nu)} \leq C\left(|Q|^p + 1\right) 
  \qquad \textrm{for any } (Q, \, \nu)\in\mcS_0\times\mathbb{S}^2,
 \end{equation}
 then the compact Sobolev embedding
 $H^{1/2}(\partial\mcP_\eps)\hookrightarrow L^p(\partial\mcP_\eps)$
 immediately implies the sequential lower semi-continuity of~$\mcF_\eps$
 with respect to the weak topology of~$H^1(\Omega_\eps)$, for any~$\eps$ and~$\alpha$.
 Under the assumption~\eqref{subcritical}, equicoercivity
 for the family of functionals~$(\mcF_\eps)_{\eps>0}$
 (i.e., a statement analogous to that of Proposition~\ref{prop:coercivity})
 can be deduced from Lemma~\ref{lem:trace}, provided 
 that $1 < \alpha < 3/2$ and that
 \[
  f_b(Q) \geq \mu|Q|^{2p-2} - C
 \]
 for some positive constants~$\mu$, $C$ and any~$Q\in\mcS_0$.
The exponent~$2p - 2$ is sharp, as demonstrated by Lemma~\ref{lem:unbounded};
Lemma~\ref{lem:unbounded3} shows that equicoercivity may also be lost when~$\alpha> 3/2$.
\end{remark}

\section{Convergence of local minimisers}
\label{Sec:conv}

\subsection{Pointwise convergence of the surface integral}

The aim of this section is to prove the following result.

\begin{prop} \label{prop:formal}
 Suppose that the assumptions~\eqref{hp:first}--\eqref{hp:last} 
 are satisfied. Then, for any \emph{bounded, Lipschitz} map
 $Q\colon\overline{\Omega}\to\mcS_0$, there holds
 $J_\eps[Q]\to J_0[Q]$ as~$\eps\to 0$.
\end{prop}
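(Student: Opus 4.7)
The strategy is to scale each inclusion back to~$\mcP$, use the regularity of $Q$ to freeze it at the centre of each inclusion, and then invoke the measure-convergence hypothesis~\eqref{hp:conv} to pass from a Riemann-type sum to the integral over~$\Omega$.

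First I would unfold each inclusion $\mcP_\eps^i = x_\eps^i + \eps^\alpha R_\eps^i \mcP$ via the change of variables $\partial\mcP \ni y \mapsto x = x_\eps^i + \eps^\alpha R_\eps^i y$. The surface element transforms by $\d\sigma(x) = \eps^{2\alpha}\d\sigma(y)$, and the outward unit normal to~$\Omega_\eps$ at~$x$ (which is the \emph{inward} unit normal to~$\mcP_\eps^i$) equals $R_\eps^i\,\nu_{\mcP}(y)$. Using also $R_\eps^i = R_*(x_\eps^i)$ from~\eqref{hp:R}, the prefactor $\eps^{3-2\alpha}\cdot\eps^{2\alpha} = \eps^3$ yields
\[
 J_\eps[Q] = \eps^3 \sum_{i=1}^{N_\eps}\int_{\partial\mcP} f_s\bigl(Q(x_\eps^i + \eps^\alpha R_\eps^i y), \, R_*(x_\eps^i)\nu_\mcP(y)\bigr) \, \d\sigma(y).
\]

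Next I would freeze $Q$ at the centres $x_\eps^i$. Since $Q$ is bounded on~$\overline{\Omega}$ with some Lipschitz constant $L$, and $\mcP$ is compact, there holds $|Q(x_\eps^i + \eps^\alpha R_\eps^i y) - Q(x_\eps^i)| \lesssim \eps^\alpha$ uniformly in~$i$ and $y\in\partial\mcP$. Combined with the cubic-growth Lipschitz bound~\eqref{hp:fs} and the $L^\infty$-control of~$Q$, this gives
\[
 \bigl|f_s(Q(x_\eps^i + \eps^\alpha R_\eps^i y), \, R_*(x_\eps^i)\nu_\mcP(y)) - f_s(Q(x_\eps^i), \, R_*(x_\eps^i)\nu_\mcP(y))\bigr| \leq C \eps^\alpha,
\]
with $C$ independent of $\eps$, $i$, $y$. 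Summing over $i$ and invoking $N_\eps\lesssim\eps^{-3}$, the total discrepancy is bounded by $C\,\eps^3 N_\eps\,|\partial\mcP|\,\eps^\alpha = O(\eps^\alpha)$, which vanishes as $\eps\to 0$.

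Finally, set $g(x) := f_{hom}(Q(x), \, x) = \int_{\partial\mcP} f_s(Q(x), R_*(x)\nu_\mcP(y))\,\d\sigma(y)$. The function $g$ is continuous on the compact set $\overline{\Omega}$ by continuity of $Q$, $R_*$ and $f_s$, hence bounded. Extending $g$ continuously with compact support to $\R^3$, the weak$^*$ convergence $\mu_\eps \rightharpoonup^* \d x\mres\Omega$ from~\eqref{hp:conv} gives
\[
 \eps^3 \sum_{i=1}^{N_\eps} g(x_\eps^i) = \int_{\R^3} g \, \d\mu_\eps \longrightarrow \int_\Omega g(x)\, \d x = J_0[Q],
\]
and combined with the freezing estimate this yields $J_\eps[Q] \to J_0[Q]$. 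The only real subtlety is the freezing step: because $f_s$ is only \emph{locally} Lipschitz in~$Q$ with cubic-growth control, one must genuinely use the \emph{boundedness} of~$Q$ (not merely its Lipschitz regularity) to obtain a uniform-in-$i$ constant~$C$. Everything else reduces to the change-of-variables calculation above plus a single application of weak$^*$ convergence of measures.
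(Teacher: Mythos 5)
Your proposal is correct and follows essentially the same route as the paper: unfold each inclusion by the rescaling change of variables (so the prefactor becomes $\eps^3$), freeze $Q$ at the centres $x_\eps^i$ with an $O(\eps^\alpha)$ error controlled via~\eqref{hp:fs}, the Lipschitz constant and the $L^\infty$-bound of~$Q$, and then pass to the limit in the resulting Riemann-type sum using the weak$^*$ convergence $\mu_\eps\rightharpoonup^*\d x\mres\Omega$ against the continuous bounded integrand $x\mapsto f_{hom}(Q(x),x)$. The only difference is cosmetic (you change variables before freezing, the paper freezes first and identifies the sum with $\int\Psi(Q,R_*)\,\d\mu_\eps$), so there is nothing to add.
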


Before we give the proof of this result, we set some notation. 
Let~$\Psi\colon\mcS_0\times\SO(3)\to\R$ be the function defined by
\begin{equation} \label{Psi}
 \Psi(Q, \, R) := \int_{\partial\mcP} f_s(Q, \, R\,\nu_{\mcP}) \, \d\sigma
\end{equation}
for any~$(Q, \, R)\in\mcS_0\times\SO(3)$,
where~$\nu_{\mcP}$ denotes the \emph{inward}-pointing unit normal to~$\partial\mcP$.
Since $f_s$ is continuous 
and satisfies $\abs{f_s(Q, \, \nu)}\lesssim\abs{Q}^4 + 1$
as a consequence of~\eqref{hp:fs},
it is readily checked that
\begin{equation} \label{Psi-cont}
 \Psi \textrm{ is continuous and } 
 \abs{\Psi(Q, \, R)} \lesssim \abs{Q}^4 + 1
 \textrm{ for any } (Q, \, R)\in\mcS_0\times\SO(3).
\end{equation}
Let us also consider the sequence of measures
\begin{equation} \label{mu_eps}
 \mu_\eps := \eps^3\sum_{i=1}^{N_\eps}\delta_{x_\eps^i}.
\end{equation}
By assumption~\eqref{hp:conv}, 
$\mu_\eps\rightharpoonup^*\d x\mres\Omega$ as~$\eps\to 0$,
in the sense of measures on~$\R^3$. (Here~$\d x\mres\Omega$ denotes
the restriction of the Lebesgue measure to~$\Omega$.)

In terms of~$\Psi$, 
the function~$f_{hom}\colon\mcS_0\times\overline{\Omega}\to\R$
defined by~\eqref{f_hom} can be equivalently
expressed as
\begin{equation} \label{Psi-f_hom}
 f_{hom}(Q, \, x) = \Psi(Q, \, R_*(x))
 \qquad \textrm{for any } (Q, \, x)\in\mcS_0\times\overline{\Omega},
\end{equation}
where~$R_*\colon\overline{\Omega}\to\SO(3)$ is the continuous map
given by~\eqref{hp:R}. Thanks to~\eqref{Psi-cont} and the
continuity of~$R_*$, we deduce that
\begin{equation} \label{f_hom-cont}
 f_{hom} \textrm{ is continuous and } 
 \abs{f_{hom}(Q, \, x)} \lesssim \abs{Q}^4 + 1
 \textrm{ for any } (Q, \, x)\in\mcS_0\times\overline{\Omega}.
\end{equation}

\begin{proof}[Proof of Proposition~\ref{prop:formal}]
 Let us fix a bounded, Lipschitz map~$Q\colon\overline{\Omega}\to\mcS_0$.
 In order to avoid problems at the boundary, 
 we extend $Q$ to a map $\R^3\to\mcS_0$,
 still denoted~$Q$, which is compactly supported, bounded and Lipschitz. 
 Likewise, we extend~$R_*$ to a continuous 
 map~$\R^3\to\SO(3)$, still denoted~$R_*$.
 Now, let us consider the quantity
 \begin{equation*} 
  \tilde{J}_\eps[Q] := \eps^{3-2\alpha} \sum_{i=1}^{N_\eps}
  \int_{\partial\mcP_\eps^i} f_s(Q(x_\eps^i), \, \nu)\, \d\sigma.
 \end{equation*}
 Since $\nu(x) = R_\eps^i \, \nu_{\mcP}(\eps^{-\alpha}(R_\eps^i)^{\mathsf{T}}(x-x_\eps^i))$
 for any~$x\in\partial\mcP_\eps^i$, by a change of variable we obtain
 \[
  \begin{split}
   \tilde{J}_\eps[Q] &= \eps^3 \sum_{i=1}^{N_\eps}
   \int_{\partial\mcP} f_s(Q(x_\eps^i), \, R_\eps^i\nu_{\mcP})\, \d\sigma \\
   &\stackrel{\eqref{hp:R}}{=} 
   \eps^3 \sum_{i=1}^{N_\eps} \int_{\partial\mcP} 
     f_s(Q(x_\eps^i), \, R_*(x_\eps^i)\nu_{\mcP})\, \d\sigma \\
   &\stackrel{\eqref{Psi}}{=} \eps^3 \sum_{i=1}^{N_\eps} 
     \Psi(Q(x_\eps^i), \, R_*(x_\eps^i))
   \stackrel{\eqref{mu_eps}}{=} 
    \int_{\R^3} \Psi(Q(x), \, R_*(x)) \, \d\mu_\eps(x).
  \end{split}
 \]
 We have $\mu_\eps\rightharpoonup^*\d x\mres\Omega$ 
 by~\eqref{hp:conv}; moreover, $Q$ is continuous by assumption,
 and the functions~$\Psi$, $R_*$ are continuous by~\eqref{Psi-cont},
 \eqref{hp:R} respectively. Therefore, as~$\eps\to 0$ we deduce that
 \begin{equation} 
  \tilde{J}_\eps[Q] \to 
  \int_{\Omega} \Psi(Q(x), \, R_*(x)) \, \d x
  \stackrel{\eqref{Psi-f_hom}}{=} 
  \int_\Omega f_{hom}(Q(x), \, x)\, \d x = J_0[Q]. 
  \label{formal1}
 \end{equation}
 Thus, it suffices to show that 
 $\tilde{J}_\eps[Q] - J_\eps[Q]\to 0$ as~$\eps\to 0$. 
 By applying the assumption~\eqref{hp:fs} we obtain
 \[
  \begin{split}
   \abs{\tilde{J}_\eps[Q] - J_\eps[Q]}
   &\lesssim \eps^{3-2\alpha} 
   \sum_{i=1}^{N_\eps} \int_{\partial\mcP_\eps^i}
   \abs{f_s(Q(x), \, \nu) - f_s(Q(x_\eps^i), \, \nu)} \d \sigma(x) \\
   &\lesssim \eps^{3-2\alpha} 
   \sum_{i=1}^{N_\eps} \int_{\partial\mcP_\eps^i}
   \left(\abs{Q(x)}^3 + \abs{Q(x_\eps^i)}^3 + 1\right)
   \abs{Q(x) - Q(x_\eps^i)} \d \sigma(x) \\
   &\lesssim \eps^{3-2\alpha}
   \left(\norm{Q}_{L^\infty(\R^3)}^3 + 1\right) \mathrm{Lip}(Q) 
   \, \sum_{i=1}^{N_\eps}
   \mathrm{diam}(\mcP_\eps^i) \, \sigma(\partial\mcP_\eps^i).
  \end{split}
 \]
 We have denoted by~$\mathrm{Lip}(Q)$ the Lipschitz constant of~$Q$,
 and by~$\mathrm{diam}(\mcP_\eps^i)$ the diameter of~$\mcP_\eps^i$.
 Finally, we remark that $\mathrm{diam}(\mcP_\eps^i)\lesssim\eps^\alpha$,
 $\sigma(\mcP_\eps^i)\lesssim\eps^{2\alpha}$ for any~$i$,
 and that~$N_\eps\lesssim\eps^{-3}$; therefore, we deduce that
 \[
  \abs{\tilde{J}_\eps[Q] - J_\eps[Q]} \lesssim 
   \left(\norm{Q}_{L^\infty(\R^3)}^3 + 1\right) \mathrm{Lip}(Q) \, \eps^\alpha.
 \]
 By combining this inequality with~\eqref{formal1}, the proposition follows.
\end{proof}

\subsection{Compactness and lower bounds}

The aim of this section is to prove the following

\begin{prop} \label{prop:liminf}
 Suppose that the assumptions~\eqref{hp:first}--\eqref{hp:last} 
 are satisfied. Let $Q_\eps\in H^1_g(\Omega_\eps, \, \mcS_0)$
 be such that $E_\eps Q_\eps\rightharpoonup Q$ weakly in~$H^1(\Omega)$
 as~$\eps\to 0$. Then, there holds
 \[
  \liminf_{\eps\to 0} \mcF_\eps [Q_\eps] \geq \mcF_0[Q], \qquad
  \lim_{\eps\to 0} J_\eps[Q_\eps] = J_0[Q].
 \]
\end{prop}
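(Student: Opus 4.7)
The plan is to treat the three pieces of $\mcF_\eps[Q_\eps]$ separately; the two-sided convergence $J_\eps[Q_\eps]\to J_0[Q]$ will come out of the surface analysis as a by-product. Throughout, set $\hat Q_\eps := E_\eps Q_\eps$, so that $\hat Q_\eps = Q_\eps$ on $\Omega_\eps$ and $\hat Q_\eps\rightharpoonup Q$ in $H^1(\Omega)$. In particular the sequence is bounded in $H^1(\Omega)$ and, after extracting a subsequence, $\hat Q_\eps\to Q$ a.e.\ in $\Omega$ (Rellich) and $\mathbf{1}_{\mcP_\eps}\to 0$ a.e.\ (because $|\mcP_\eps|\to 0$).

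For the gradient term, rather than trying to use weak lower semi-continuity on the $\eps$-dependent domain $\Omega_\eps$, I would exploit the subgradient inequality provided by convexity of $f_e$:
\[
 f_e(\nabla Q_\eps) \geq f_e(\nabla Q) + (\nabla f_e)(\nabla Q):(\nabla Q_\eps - \nabla Q).
\]
Integrating over $\Omega_\eps$, the leading term $\int_{\Omega_\eps} f_e(\nabla Q)$ tends to $\int_\Omega f_e(\nabla Q)$ by dominated convergence, since $f_e(\nabla Q)\in L^1(\Omega)$ and $|\mcP_\eps|\to 0$. The cross term is
\[
 \int_\Omega \mathbf{1}_{\Omega_\eps}\,(\nabla f_e)(\nabla Q):\nabla(\hat Q_\eps - Q) \,\d x;
\]
the factor $\mathbf{1}_{\Omega_\eps}(\nabla f_e)(\nabla Q)$ converges strongly to $(\nabla f_e)(\nabla Q)$ in $L^2(\Omega)$ (using the growth bound in \eqref{hp:fe}), while $\nabla(\hat Q_\eps - Q)\rightharpoonup 0$ weakly in $L^2(\Omega)$, so the cross term vanishes. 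The bulk term is handled in parallel: writing $\int_{\Omega_\eps} f_b(Q_\eps) = \int_\Omega \mathbf{1}_{\Omega_\eps} f_b(\hat Q_\eps)\,\d x$, along the chosen subsequence the integrand converges a.e.\ to $f_b(Q)$ and is uniformly bounded below thanks to \eqref{hp:fb}, so Fatou's lemma gives the desired lower bound.

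The main obstacle is the surface integral, because $f_s$ has \emph{critical} quartic growth and lower semi-continuity fails in general (cf.\ Lemma~\ref{lem:not_lsc}). I would approximate $Q$ by smooth maps $\tilde Q_n\in C^\infty(\overline\Omega,\mcS_0)$ with $\tilde Q_n\to Q$ in $H^1(\Omega)$, hence in every $L^p(\Omega)$ with $p\leq 6$. Proposition~\ref{prop:formal} yields $J_\eps[\tilde Q_n]\to J_0[\tilde Q_n]$ for each fixed $n$, and the quartic growth of $f_{hom}$ together with H\"older gives $J_0[\tilde Q_n]\to J_0[Q]$ as $n\to\infty$. The remainder $|J_\eps[Q_\eps]-J_\eps[\tilde Q_n]|$ is estimated from \eqref{hp:fs} and H\"older's inequality on $\partial\mcP_\eps$ with conjugate exponents $4/3$ and $4$:
\[
 |J_\eps[Q_\eps]-J_\eps[\tilde Q_n]|
 \lesssim \left(\eps^{3-2\alpha}\!\!\int_{\partial\mcP_\eps}\!(|Q_\eps|^4+|\tilde Q_n|^4+1)\,\d\sigma\right)^{\!3/4}\!\!
 \left(\eps^{3-2\alpha}\!\!\int_{\partial\mcP_\eps}\!|Q_\eps-\tilde Q_n|^4\,\d\sigma\right)^{\!1/4}\!.
\]
Applying Lemma~\ref{lem:trace_eps} with $p=4$ converts each weighted $L^4$-trace into bulk expressions of the form $\eps^{3-2\alpha}(\|\nabla\cdot\|_{L^2(\Omega_\eps)}^2+\|\cdot\|_{L^6(\Omega_\eps)}^6)+\|\cdot\|_{L^4(\Omega_\eps)}^4$. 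Because $\alpha<3/2$, the uniform $H^1$-bound on $\hat Q_\eps$ (and Sobolev embedding) keeps the first factor bounded in $\eps$ for fixed $n$; the same factor applied to $Q_\eps-\tilde Q_n$ forces the $\eps^{3-2\alpha}$-weighted contributions to vanish as $\eps\to 0$ for each $n$, and the residual $L^4(\Omega_\eps)$ contribution converges to $\|Q-\tilde Q_n\|_{L^4(\Omega)}^4$ by Rellich and hence to zero as $n\to\infty$. A diagonal extraction then gives $J_\eps[Q_\eps]\to J_0[Q]$, and summing the three pieces yields $\liminf_{\eps\to 0}\mcF_\eps[Q_\eps]\geq \mcF_0[Q]$.
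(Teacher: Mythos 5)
Your proposal is correct and follows essentially the same route as the paper's proof: the convexity/subgradient argument for the elastic term, Fatou's lemma for the bulk term, and, for the surface term, the combination of H\"older, the trace inequality of Lemma~\ref{lem:trace_eps}, smooth approximation of $Q$, and Proposition~\ref{prop:formal}. The only difference is organizational: you compare $J_\eps[Q_\eps]$ directly with $J_\eps[\tilde Q_n]$, whereas the paper routes the same estimates through $J_\eps[Q]$ via Lemmas~\ref{lem:J_cont} and~\ref{lem:J_conv}.
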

\begin{remark} \label{remark:Gamma}
 The result above implies that $\mcF_\eps$ $\Gamma$-converges to~$\mcF_0$
 as~$\eps\to 0$, with respect to the weak $H^1$-topology.
 Indeed, Proposition~\ref{prop:liminf} immediately gives that
 $\mcF_0\leq\Gamma\textrm{-}\liminf_{\eps\to 0}\mcF_\eps$.
 To show that $\mcF_0\geq\Gamma\textrm{-}\limsup_{\eps\to 0}\mcF_\eps$,
 take $Q\in H^1_g(\Omega, \, \mcS_0)$ and observe that, 
 since $|\mcP_\eps|\to 0$, Lebesgue's dominated convergence theorem implies 
 $\mcF_\eps[Q] - J_\eps[Q] \to \mcF_0[Q] - J_0[Q]$, while 
 $J_\eps[Q] \to J_0[Q]$ by Proposition~\ref{prop:liminf}.
 Therefore, the constant sequence $Q_\eps := Q$ is a recovery sequence.
\end{remark}

We start by proving an ``approximate continuity'' property of~$J_\eps$.
Note that, thanks to the extension operator~$E_\eps$, we can assume without 
loss of generality that the maps we are working with are defined over the whole of~$\Omega$. 

\begin{lemma} \label{lem:J_cont}
 Suppose that the assumption~\eqref{hp:fs} is satisfied.
 Let~$Q_1$, $Q_2\in H^1_g(\Omega, \mcS_0)$ 
 be such that
 \begin{equation} \label{eq:H1bd}
  \max\left\{\|\nabla Q_1\|_{L^2(\Omega)}, \, \|\nabla Q_2\|_{L^2(\Omega)}\right\} \leq M
 \end{equation}
 for some ($\eps$-independent) constant~$M$. Then, there holds
 \[
  \abs{J_\eps[Q_2] - J_\eps[Q_1]} \leq C \left(\eps^{3/4-\alpha/2}
  + \norm{Q_2 - Q_1}_{L^4(\Omega)}\right)
 \]
 for some~$C>0$ depending only on~$M$, $f_s$, $\Omega$, $\mcP$ and~$g$.
\end{lemma}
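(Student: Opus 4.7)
The plan is to apply Assumption~\eqref{hp:fs} pointwise on $\partial\mcP_\eps$, use Hölder's inequality to separate the cubic factor from the difference $Q_1-Q_2$, and then invoke Lemma~\ref{lem:trace_eps} (with $p=4$) twice to transfer boundary integrals to bulk integrals, where the $H^1$-bound~\eqref{eq:H1bd} and Sobolev embeddings provide control. The proof is essentially a bookkeeping exercise: the key is to split the prefactor $\eps^{3-2\alpha}$ cleverly between the two Hölder factors so that one power of $\eps$ survives on the ``constant'' part and the power on the $L^4(\Omega)$-norm part cancels exactly.

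Concretely, I would start from Assumption~\eqref{hp:fs} to bound
\[
 \abs{J_\eps[Q_2] - J_\eps[Q_1]} \lesssim \eps^{3-2\alpha} \int_{\partial\mcP_\eps}\left(\abs{Q_1}^3 + \abs{Q_2}^3 + 1\right)\abs{Q_1 - Q_2}\,\d\sigma,
\]
and then apply Hölder's inequality with conjugate exponents $4/3$ and $4$ on $\partial\mcP_\eps$. Using $(\abs{Q_1}^3 + \abs{Q_2}^3 + 1)^{4/3}\lesssim \abs{Q_1}^4 + \abs{Q_2}^4 + 1$, this yields
\[
 \abs{J_\eps[Q_2] - J_\eps[Q_1]} \lesssim \eps^{3-2\alpha}\left(\int_{\partial\mcP_\eps}(\abs{Q_1}^4 + \abs{Q_2}^4 + 1)\,\d\sigma\right)^{\!3/4}\!\norm{Q_1-Q_2}_{L^4(\partial\mcP_\eps)}.
\]

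Next I estimate both boundary factors with Lemma~\ref{lem:trace_eps} at $p=4$. Since $Q_i=g$ on $\partial\Omega$, the Poincaré inequality and the bound~\eqref{eq:H1bd} give $\norm{Q_i}_{H^1(\Omega)}\leq C$, so by Sobolev embedding $\norm{Q_i}_{L^4(\Omega)}+\norm{Q_i}_{L^6(\Omega)}\leq C$. Combined with the volume estimate $\abs{\partial\mcP_\eps}\lesssim\eps^{2\alpha-3}$ (from $N_\eps\lesssim\eps^{-3}$), Lemma~\ref{lem:trace_eps} gives $\int_{\partial\mcP_\eps}\abs{Q_i}^4\,\d\sigma\lesssim\eps^{2\alpha-3}$, so the first factor is $\lesssim\eps^{3(2\alpha-3)/4}$. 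For the second factor, since $Q_1-Q_2\in H^1_0(\Omega,\mcS_0)$, Poincaré again gives $\norm{Q_1-Q_2}_{H^1(\Omega)}\lesssim M$; applying Lemma~\ref{lem:trace_eps} to $Q_1-Q_2$ yields
\[
 \eps^{3-2\alpha}\int_{\partial\mcP_\eps}\abs{Q_1-Q_2}^4\,\d\sigma \lesssim \eps^{3-2\alpha} + \norm{Q_1 - Q_2}_{L^4(\Omega)}^4,
\]
so that $\norm{Q_1-Q_2}_{L^4(\partial\mcP_\eps)}\lesssim 1 + \eps^{(2\alpha-3)/4}\norm{Q_1 - Q_2}_{L^4(\Omega)}$.

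Combining these bounds reduces the proof to the arithmetic of exponents: $\eps^{3-2\alpha}\cdot\eps^{3(2\alpha-3)/4}=\eps^{(3-2\alpha)/4}=\eps^{3/4-\alpha/2}$, while $\eps^{3-2\alpha}\cdot\eps^{3(2\alpha-3)/4}\cdot\eps^{(2\alpha-3)/4}=\eps^0=1$. Putting the pieces together produces exactly the desired bound $\eps^{3/4-\alpha/2}+\norm{Q_2-Q_1}_{L^4(\Omega)}$. No part of this is conceptually hard; the only care needed is to pick Hölder exponents $(4/3,4)$ (forced by the cubic growth of $\partial_Q f_s$), which is precisely what matches the critical $p=4$ of Lemma~\ref{lem:trace_eps}, and to track the $\eps$-exponents so that they align to $3/4-\alpha/2$ and $0$ respectively.
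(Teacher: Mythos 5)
Your proof is correct and follows essentially the same route as the paper: Hölder with exponents $(4/3,4)$ on $\partial\mcP_\eps$, Lemma~\ref{lem:trace_eps} with $p=4$ applied both to $Q_1,Q_2$ and to the difference $Q_1-Q_2$, the $H^1$-bound with Poincaré/Sobolev, and the surface-area estimate $\abs{\partial\mcP_\eps}\lesssim\eps^{2\alpha-3}$. The only difference is cosmetic bookkeeping (the paper absorbs the factor $\eps^{3-2\alpha}$ into the two Hölder factors $I_1,I_2$, while you keep it outside and balance exponents at the end), which yields the same bound $\eps^{3/4-\alpha/2}+\norm{Q_2-Q_1}_{L^4(\Omega)}$.
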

\begin{proof}
 By applying the assumption~\eqref{hp:fs} and H\"older inequality, we obtain
 \[
  \begin{split}
   &\abs{J_\eps[Q_2] - J_\eps[Q_1]} \leq \eps^{3-2\alpha} 
   \int_{\partial\mcP_\eps} \left(|Q_1|^3 + |Q_2|^3 + 1\right) \abs{Q_2 - Q_1} \d\sigma \\
   &\qquad \lesssim \underbrace{\left(\eps^{3-2\alpha} \int_{\partial\mcP_\eps} \left(|Q_1|^4 + |Q_2|^4 + 1\right)
   \d\sigma \right)^{3/4}}_{=: I_1} \, \underbrace{\left(\eps^{3-2\alpha} 
   \int_{\partial\mcP_\eps} \abs{Q_2 - Q_1}^4 \d\sigma\right)^{1/4}}_{=: I_2}
  \end{split}
 \]
 We first consider~$I_1$. We apply Lemma~\ref{lem:trace_eps}, and use the fact that
 the total surface area of~$\mcP_\eps$ is of order~$\eps^{2\alpha-3}$:
 \[
  I_1^{4/3} \lesssim \eps^{3-2\alpha} \int_{\Omega_\eps}\left(\abs{\nabla Q_1}^2 +
  \abs{\nabla Q_2}^2 + \abs{Q_1}^6 + \abs{Q_2}^6\right) \d x 
  + \int_{\Omega_\eps}\left(\abs{Q_1}^4 + \abs{Q_2}^4\right)\d x + 1.
 \]
 The right-hand side is bounded in terms of~$M$, due to the $H^1$-bound 
 \eqref{eq:H1bd} and the Sobolev embedding
 $H^1_g(\Omega)\hookrightarrow L^6(\Omega)$;
 therefore, $I_1$ is bounded. Now, we apply Lemma~\ref{lem:trace_eps} to~$I_2$:
 \[
  I_2^{4} \lesssim \eps^{3-2\alpha} \int_{\Omega_\eps}\left(\abs{\nabla Q_2 - \nabla Q_1}^2 +
  \abs{Q_2 - Q_1}^6 \right) \d x 
  + \int_{\Omega_\eps}\abs{Q_2 - Q_1}^4\d x .
 \]
 Again, the first integral in the right hand-side
 is bounded due to~\eqref{eq:H1bd} and Sobolev embeddings,
 so the lemma follows.
\end{proof}

\begin{lemma} \label{lem:J_conv}
 For \emph{any}~$Q\in H_g^1(\Omega, \mcS_0)$, there holds
 $J_\eps[Q]\to J_0[Q]$ as~$\eps\to 0$.
\end{lemma}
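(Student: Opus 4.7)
The plan is to promote Proposition~\ref{prop:formal} from bounded Lipschitz maps to the full Sobolev class $H^1_g(\Omega, \mcS_0)$ by a density argument, using Lemma~\ref{lem:J_cont} to control the error on the $\eps$-side and a Hölder estimate to control the corresponding error on the $J_0$-side.

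First I would approximate the given $Q\in H^1_g(\Omega, \mcS_0)$ by bounded Lipschitz maps $Q_n\colon\overline{\Omega}\to\mcS_0$ such that $Q_n\to Q$ strongly in $H^1(\Omega)$ (hence also in $L^4(\Omega)$, by the Sobolev embedding $H^1\hookrightarrow L^6$), with $\sup_n\|\nabla Q_n\|_{L^2(\Omega)}<\infty$. A routine construction is to compose $Q$ with the $1$-Lipschitz nearest-point projection onto the ball $\{|P|\leq M\}\subseteq\mcS_0$ (letting $M\to\infty$), then extend the result to $\R^3$ via a bounded $H^1$-extension operator and mollify. The boundary trace need not be preserved by $Q_n$: the proofs of Proposition~\ref{prop:formal} and Lemma~\ref{lem:J_cont} only use the $H^1$-bound, not the condition $Q=g$ on $\partial\Omega$.

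Then I would split
\[
 J_\eps[Q] - J_0[Q] = \bigl(J_\eps[Q] - J_\eps[Q_n]\bigr) + \bigl(J_\eps[Q_n] - J_0[Q_n]\bigr) + \bigl(J_0[Q_n] - J_0[Q]\bigr)
\]
and treat the three pieces as follows. Lemma~\ref{lem:J_cont} bounds the first by $C(\eps^{3/4-\alpha/2}+\|Q-Q_n\|_{L^4(\Omega)})$, with $C$ uniform in $n$ thanks to the uniform $H^1$-bound; the exponent $3/4-\alpha/2$ is positive because $\alpha<3/2$ by~\eqref{hp:alpha}. The middle term tends to zero as $\eps\to 0$ for each fixed $n$ by Proposition~\ref{prop:formal}. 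The third term is handled by the growth condition~\eqref{hp:fs} inherited by $f_{hom}$: since $\abs{f_{hom}(Q_1,x)-f_{hom}(Q_2,x)}\lesssim(\abs{Q_1}^3+\abs{Q_2}^3+1)\abs{Q_1-Q_2}$, Hölder's inequality yields
\[
 \abs{J_0[Q]-J_0[Q_n]}\lesssim\bigl(\norm{Q}_{L^4(\Omega)}^3+\norm{Q_n}_{L^4(\Omega)}^3+1\bigr)\norm{Q-Q_n}_{L^4(\Omega)},
\]
which tends to $0$ as $n\to\infty$.

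A standard three-$\eps$ argument then closes the proof: given $\delta>0$, choose $n$ so large that $\norm{Q-Q_n}_{L^4(\Omega)}<\delta$; take $\limsup$ in $\eps$ to reduce everything to a multiple of $\delta$; finally let $\delta\to 0$. The only mildly subtle point I anticipate is producing the Lipschitz approximations with uniformly bounded $H^1$-norm, so that the constant $C$ in Lemma~\ref{lem:J_cont} does not blow up with $n$; once this is in place the argument is entirely routine.
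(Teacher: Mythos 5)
Your proof is correct and follows essentially the same route as the paper's: approximate $Q$ by regular (smooth/Lipschitz) maps, split via the triangle inequality, and handle the first two pieces with Lemma~\ref{lem:J_cont} and Proposition~\ref{prop:formal}. The only differences are cosmetic --- for $J_0[Q_n]-J_0[Q]$ the paper uses dominated convergence where you use the Lipschitz-type bound on $f_{hom}$ plus H\"older, and the uniform $H^1$-bound you flag as the subtle point is automatic, since strong $H^1$-convergence of the approximants already provides it.
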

\begin{proof}
 Let~$(Q_j)_{j\in\N}$ be a sequence of smooth maps that converge 
 $H^1$-strongly to~$Q$. By the triangle inequality, we have
 \begin{equation} \label{eq:triangle}
  \abs{J_\eps[Q] - J_0[Q]} \leq \abs{J_\eps[Q] - J_\eps[Q_j]} +
  \abs{J_\eps[Q_j] - J_0[Q_j]} + \abs{J_0[Q_j] - J_0[Q]}
 \end{equation}
 To deal with the first term, we apply Lemma~\ref{lem:J_cont} and deduce that
 \[
  \abs{J_\eps[Q] - J_\eps[Q_j]} \lesssim \eps^{3/4-\alpha/2}
  + \norm{Q_j - Q}_{L^4(\Omega)}.
 \]
 The second term in the right-hand side of~\eqref{eq:triangle} 
 tends to zero as~$\eps\to 0$, by Proposition~\ref{prop:formal}.
 We consider now the third term. By the Sobolev embedding 
 $H^1(\Omega)\hookrightarrow L^4(\Omega)$, and up to extraction of
 a subsequence, we can assume that $Q_j\to Q$ a.e. and there exists 
 a function~$\varphi\in L^1(\Omega)$ such that $|Q_j|^4 \leq\varphi$ for any~$j$.
 Since $f_{hom}$ is continuous and $|f_{hom}(Q)|\lesssim |Q|^4 + 1$
 (see~\eqref{f_hom-cont}),
 we can apply Lebesgue's dominated convergence theorem to 
 conclude that $J_0[Q_j] \to J_0[Q]$ as~$j\to +\infty$.
 Putting all this together, we obtain
 \[
  \limsup_{\eps\to 0} \abs{J_\eps[Q] - J_0[Q]}
  \leq \norm{Q_j - Q}_{L^4(\Omega)} + \abs{J_0[Q_j] - J_0[Q]} 
  \to 0 \quad \textrm{as } j\to+\infty,
 \]
 so the lemma follows.
\end{proof}

\begin{proof}[Proof of Proposition~\ref{prop:liminf}]
 Let~$Q_\eps\in H^1_g(\Omega_\eps, \mcS_0)$ be such that 
 $E_\eps Q_\eps\rightharpoonup Q$
 weakly in $H^1(\Omega, \, \mcS_0)$. In particular, $(E_\eps Q_\eps)_{\eps>0}$
 is a bounded sequence in~$H^1(\Omega)$. By the compact Sobolev embedding
 $H^1(\Omega)\hookrightarrow L^4(\Omega)$, we can also assume that 
 $E_\eps Q_\eps \to Q$ strongly in~$L^4$ and a.e. (possibly by taking a 
 non-relabelled subsequence).
 
 We first consider the elastic contribution. The convexity of~$f_e$,
 together with H\"older inequality, implies
 \[
  \begin{split}
   &\int_{\Omega_\eps} 
     \left(f_e(\nabla Q_\eps) - f_e(\nabla Q)\right)\d x \geq 
   \int_{\Omega_\eps} (\nabla f_e)(\nabla Q):(\nabla Q_\eps - \nabla Q)\, \d x \\
   &\geq \int_{\Omega}\!(\nabla f_e)(\nabla Q)\!:\!(\nabla (E_\eps Q_\eps) - \nabla Q)\, \d x 
   - \norm{(\nabla f_e)(\nabla Q)}_{L^2(\mcP_\eps)}\!\norm{\nabla (E_\eps Q_\eps) - \nabla Q}_{L^2(\mcP_\eps)}
  \end{split}
 \]
 Since~$|(\nabla f_e)(\nabla Q)|\lesssim |\nabla Q| + 1$ by assumption~\eqref{hp:fe},
 $(\nabla f_e) (\nabla Q)\in L^2(\Omega)$ indeed. 
 The first term in the right-hand side tends to zero
 as~$\eps\to 0$, because of the $H^1$-weak convergence
 $E_\eps Q_\eps\rightharpoonup Q$. The second term tends 
 to zero as well, because  $(\nabla f_e) (\nabla Q)\in L^2(\Omega)$ and
 $|\mcP_\eps|\to 0$. Then, we deduce
 \begin{equation}\label{eq:liminf1}
  \liminf_{\eps\to 0} \int_{\Omega_\eps} f_e(\nabla Q_\eps)\, \d x \geq 
  \lim_{\eps\to 0} \int_{\Omega_\eps} f_e(\nabla Q)\, \d x = \int_\Omega f_e(\nabla Q)\, \d x.
 \end{equation}

 Now, we consider the integral of the bulk potential~$f_b$. By assumption~\eqref{hp:fb},
 $f_b$ is bounded from below; let~$C$ be a constant such that $f_b + C\geq 0$.
 Since $|\mcP_\eps|\to 0$, the indicator function $\chi_{\Omega_\eps}$
 of~$\Omega_\eps$ converges to~$1$ strongly in~$L^1(\Omega)$
 and hence, up to non-relabelled subsequences, a.e. This fact, together
 with the continuity of~$f_b$, implies that 
 $(f_b(E_\eps Q_\eps) + C)\chi_{\Omega_\eps}\to f_b(Q) + C$ a.e. on~$\Omega$.
 Thus, we can apply Fatou's lemma:
 \begin{equation} \label{eq:liminf2}
  \begin{split}
   \liminf_{\eps\to 0}\int_{\Omega_\eps} f_b(Q_\eps) \, \d x &=
   \liminf_{\eps\to 0}\left(\int_{\Omega} \left(f_b(E_\eps Q_\eps) + C\right) 
      \chi_{\Omega_\eps}\d x - C|\Omega_\eps|\right) \\
   &\geq \int_\Omega (f_b(Q) + C) \, \d x - C|\Omega| 
   = \int_\Omega f_b(Q) \, \d x.
  \end{split}
 \end{equation}

 Finally, it remains to consider the surface integral, $J_\eps$.
 Thanks to Lemma~\ref{lem:J_cont}, we have
 \[
 \begin{split}
  \abs{J_\eps[Q_\eps] - J_0[Q]} &\leq 
  \abs{J_\eps[E_\eps Q_\eps] - J_\eps[Q]} + \abs{J_\eps[Q] - J_0[Q]}  \\
  &\lesssim \eps^{3/4-\alpha/4} + \norm{E_\eps Q_\eps - Q}_{L^4(\Omega)}
  + \abs{J_\eps[Q] - J_0[Q]} .
 \end{split}
 \]
 All the terms in the right-hand side converge to zero as~$\eps\to 0$,
 due to the strong $L^4$-convergence $E_\eps Q_\eps \to Q$
 and to Lemma~\ref{lem:J_conv}. Therefore, we have $J_\eps[Q_\eps]\to J_0[Q]$.
 This fact, combined with~\eqref{eq:liminf1} and~\eqref{eq:liminf2}, 
 concludes the proof of the proposition.
\end{proof}

\subsection{Proof of Theorems~\ref{th:loc-min} and~\ref{th:global-min}}

The aim of this section is to complete the proof of the main results, 
Theorems~\ref{th:loc-min} and~\ref{th:global-min}.
Let~$Q_0\in H^1_g(\Omega, \mcS_0)$ be an isolated $H^1$-local minimiser
for $\mcF_0$ --- that is, suppose that there exists $\delta_0>0$ such that
\begin{equation} \label{eq:loc-min}
 \mcF_0[Q_0] < \mcF_0[Q] \quad \textrm{for all } Q\in H^1_g(\Omega, \mcS_0)
  \ \textrm{ such that } 0 < \norm{Q - Q_0}_{H^1(\Omega)} \leq\delta_0.
\end{equation}
Let
\[
 \mcB_\eps := \{Q\in H^1_g(\Omega_\eps,\mcS_0)\colon
 \norm{E_\eps Q-Q_0}_{H^1(\Omega)}\leq\delta_0\} \! .
\]
Thanks to Proposition~\ref{prop:lsc}, there exists~$\eps_0> 0$
such that, for any~$0 < \eps \leq \eps_0$, the functional~$\mcF_\eps$
is sequentially lower semi-continuous on~$\mcB_\eps$, with respect 
to the weak $H^1$-topology. Therefore, $\mcF_\eps$ admits a 
minimiser~$Q_\eps$ on~$\mcB_\eps$.

\begin{prop} \label{prop:convergence}
 There holds $E_\eps Q_\eps\to Q_0$ 
 \emph{strongly} in~$H^1_g(\Omega, \, \mcS_0)$ as $\eps\to 0$.
\end{prop}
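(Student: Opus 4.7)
The plan is a direct-method argument in three movements: weak compactness, identification of the weak limit via the $\Gamma$-liminf inequality plus a well-chosen upper-bound competitor, and promotion of weak to strong $H^1$-convergence through the strong convexity of $f_e$. Since $Q_\eps \in \mcB_\eps$, the sequence $(E_\eps Q_\eps)$ is bounded in $H^1(\Omega)$, so up to a (non-relabelled) subsequence it converges weakly to some $Q_* \in H^1_g(\Omega, \mcS_0)$ with $\|Q_* - Q_0\|_{H^1(\Omega)} \leq \delta_0$. Proposition~\ref{prop:liminf} yields $\mcF_0[Q_*] \leq \liminf_{\eps\to 0} \mcF_\eps[Q_\eps]$. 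For the matching upper bound I test the minimality of $Q_\eps$ against the competitor $Q_0|_{\Omega_\eps}$. Two preliminaries: first, for $\eps$ small enough this competitor lies in $\mcB_\eps$, because $E_\eps(Q_0|_{\Omega_\eps}) - Q_0$ is supported in $\mcP_\eps$ with $\|\nabla E_\eps(Q_0|_{\Omega_\eps})\|_{L^2(\mcP_\eps)}\leq \|\nabla Q_0\|_{L^2(\mcP_\eps)}\to 0$ by the energy-minimising property of the harmonic extension together with $|\mcP_\eps|\to 0$; second, $\mcF_\eps[Q_0|_{\Omega_\eps}] \to \mcF_0[Q_0]$, the bulk and elastic integrals converging by dominated convergence and the surface term by Lemma~\ref{lem:J_conv}. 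Combining yields $\mcF_0[Q_*] \leq \mcF_0[Q_0]$, and the isolated-minimality hypothesis~\eqref{eq:loc-min} forces $Q_* = Q_0$; by uniqueness of the subsequential limit, the full family $E_\eps Q_\eps$ converges weakly to $Q_0$.

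Next, the sandwich $\mcF_0[Q_0] \leq \liminf \mcF_\eps[Q_\eps] \leq \limsup \mcF_\eps[Q_\eps] \leq \mcF_0[Q_0]$ is an equality; combined with the three separate liminf inequalities (elastic, bulk, surface) assembled in the proof of Proposition~\ref{prop:liminf}, each must saturate, giving in particular
\[
 \lim_{\eps \to 0} \int_{\Omega_\eps} f_e(\nabla Q_\eps) \, \d x = \int_\Omega f_e(\nabla Q_0) \, \d x.
\]
The strong convexity of $f_e$ with parameter $\theta > 0$ provides the pointwise bound
\[
 f_e(\nabla Q_\eps) - f_e(\nabla Q_0) - (\nabla f_e)(\nabla Q_0):(\nabla Q_\eps - \nabla Q_0) \geq \theta\,\abs{\nabla Q_\eps - \nabla Q_0}^2.
\]
Integrated over $\Omega_\eps$, the first two terms share the common limit above, while the cross term vanishes because $(\nabla f_e)(\nabla Q_0)\chi_{\Omega_\eps}\to (\nabla f_e)(\nabla Q_0)$ strongly in $L^2(\Omega)$ (dominated convergence plus assumption~\eqref{hp:fe}) and $\nabla E_\eps Q_\eps - \nabla Q_0 \rightharpoonup 0$ weakly in $L^2(\Omega)$. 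Therefore $\nabla Q_\eps \to \nabla Q_0$ strongly in $L^2(\Omega_\eps)$.

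To transfer this to strong convergence of $\nabla E_\eps Q_\eps$ on all of $\Omega$, I decompose, using linearity of the harmonic extension,
\[
 E_\eps Q_\eps - Q_0 = E_\eps(Q_\eps - Q_0|_{\Omega_\eps}) + \bigl(E_\eps(Q_0|_{\Omega_\eps}) - Q_0\bigr).
\]
The first piece has $L^2(\Omega)$-gradient-norm bounded by $C\|\nabla Q_\eps - \nabla Q_0\|_{L^2(\Omega_\eps)}\to 0$ via Lemma~\ref{lem:extension}, while the second piece is supported in $\mcP_\eps$ with gradient-norm dominated by $2\|\nabla Q_0\|_{L^2(\mcP_\eps)}\to 0$. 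Strong $L^2$-convergence $E_\eps Q_\eps \to Q_0$ follows from the weak $H^1$-convergence and the compact embedding $H^1(\Omega) \hookrightarrow L^2(\Omega)$. The main obstacle I anticipate is precisely this final transfer step: one has to carefully track the harmonic extension on the vanishing inclusion set and exploit the scale-invariant bound of Lemma~\ref{lem:extension}; everything upstream is either soft (weak compactness, isolated minimality) or a direct application of tools already assembled in Sections~\ref{Sec:prop}--\ref{Sec:conv}.
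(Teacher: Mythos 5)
Your proposal is correct, and its overall architecture coincides with the paper's: weak compactness in $\mcB_\eps$, identification of the weak limit as $Q_0$ via Proposition~\ref{prop:liminf} together with the competitor $Q_0|_{\Omega_\eps}$ and the isolated-minimality hypothesis, and then an upgrade to strong convergence through the strong convexity of $f_e$. You deviate from the paper in two implementation details, both legitimate. First, for the strong-convergence step the paper splits $f_e = \tilde f_e + \theta|\cdot|^2$, re-runs the liminf arguments for $\tilde f_e$, $\theta|\nabla Q|^2$ and $f_b$ separately, and deduces convergence of the Dirichlet norms $\int_{\Omega_\eps}|\nabla Q_\eps|^2 \to \int_\Omega|\nabla Q_0|^2$, which combined with weak convergence gives strong $L^2$ convergence of the gradients on $\Omega_\eps$; you instead saturate the elastic energy and apply the first-order strong-convexity inequality $f_e(D_1)-f_e(D_2)-(\nabla f_e)(D_2):(D_1-D_2)\geq\theta|D_1-D_2|^2$, killing the cross term by strong-times-weak convergence in $L^2$. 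This is a slightly more direct route to $\int_{\Omega_\eps}|\nabla Q_\eps-\nabla Q_0|^2\to 0$ and avoids re-deriving the separate liminf inequalities. Second, to control the extension inside $\mcP_\eps$ the paper invokes the scaled bound $\|\nabla(E_\eps Q_\eps)\|_{L^2(\mcP_\eps)}\lesssim\|\nabla Q_\eps\|_{L^2(\widetilde{\mcP}_\eps\setminus\mcP_\eps)}$ together with equi-integrability of $|\nabla(E_\eps Q_\eps)|^2\chi_{\Omega_\eps}$ and $|\widetilde{\mcP}_\eps\setminus\mcP_\eps|\to 0$, whereas you use linearity of $E_\eps$, Lemma~\ref{lem:extension} applied to $Q_\eps-Q_0|_{\Omega_\eps}$, and the Dirichlet-minimality of the harmonic extension of $Q_0$; your variant is arguably cleaner since it needs no equi-integrability argument. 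Your explicit verification that $Q_0|_{\Omega_\eps}\in\mcB_\eps$ (which the paper merely asserts) is welcome; for completeness note that the $L^2$ part of that norm is also controlled, e.g. by Poincar\'e on each inclusion, since $E_\eps(Q_0|_{\Omega_\eps})-Q_0$ vanishes on $\partial\mcP_\eps^i$.
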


Because the convergence is strong in~$H^1$, for small~$\varepsilon$
the map~$Q_\eps$ lies in the interior of~$\mcB_\eps$ and, in particular,
it is an $H^1$-local minimiser of~$\mcF_\eps$.
Therefore, Theorem~\ref{th:loc-min} follows immediately 
from Proposition~\ref{prop:convergence}.

\begin{proof}[Proof of Proposition~\ref{prop:convergence}]
 Let~$\mcB_0$ be the set of maps $Q\in H^1_g(\Omega,\mcS_0)$ such that
 $\|Q-Q_0\|_{H^1(\Omega)}\leq\delta_0$.
 Because~$Q_\eps\in\mcB_\eps$, we can extract a (non-relabelled)
 subsequence so that $E_\eps Q_\eps$ converges $H^1$-weakly to
 $\overline{Q}\in\mcB_0$. Proposition~\ref{prop:liminf} 
 implies that $\overline{Q}$ is a minimiser of~$\mcF_0$ in~$\mcB_0$.
 Indeed, we certainly have ${Q_0}_{|\Omega_\eps}\in\mcB_\eps$ for~$\eps$ small enough, 
 and the minimality of~$Q_\eps$ implies that
 $\mcF_\eps[Q_\eps] \leq \mcF_\eps[Q_0]$. Therefore,
 by applying Proposition~\ref{prop:liminf}, 
 we have
 \begin{equation} \label{eq:Gamma}
  \mcF_0[\overline{Q}] \leq \liminf_{\eps\to 0} \mcF_\eps[Q_\eps]
  \leq \limsup_{\eps\to 0} \mcF_\eps[Q_\eps]
  \leq \lim_{\eps\to 0} \mcF_\eps[Q_0] = \mcF_0[Q_0].
 \end{equation}
 Due to~\eqref{eq:loc-min}, we must have $\overline{Q} = Q_0$.
 The uniqueness of the limit (and the fact that weak convergence 
 on bounded subsets of~$H^1$ is metrisable) implies that the whole sequence 
 $(Q_\eps)_{0<\eps\leq\eps_0}$ converges weakly to~$Q_0$.

 Now, it only remains to show that the convergence is actually strong.
 By Assumption~\eqref{hp:fe}, there exists $\theta>0$ such that
 the function $\tilde{f}_e(D) := f_e(D) - \theta|D|^2$
 is convex. Therefore, we can repeat the arguments 
 for~\eqref{eq:liminf1}--\eqref{eq:liminf2} in the proof of Proposition~\ref{prop:liminf}
 and prove that
 \begin{gather*}
  \liminf_{\eps\to 0} \int_{\Omega_\eps} \tilde{f}_e(\nabla Q_\eps)\, \d x
  \geq \int_{\Omega} \tilde{f}_e(\nabla Q_0)\,\d x, \quad
  \theta\liminf_{\eps\to 0} \int_{\Omega_\eps} \abs{\nabla Q_\eps}^2 \d x
  \geq \theta \int_{\Omega} \abs{\nabla Q_0}^2 \d x \\  
  \liminf_{\eps\to 0} \int_{\Omega_\eps} f_b(Q_\eps)\, \d x
  \geq \int_{\Omega} f_b(Q_0)\,\d x
 \end{gather*}
 We know that $J_\eps[Q_\eps]\to J_0[Q_0]$ by Proposition~\ref{prop:liminf}.
 At the same time, as a byproduct of~\eqref{eq:Gamma}, 
 we see that $\mcF_\eps[Q_\eps]\to\mcF_0[Q]$.
 Therefore, there must hold
 \begin{equation*} 
  \theta \lim_{\eps\to 0}\int_{\Omega_\eps} \abs{\nabla Q_\eps}^2 \d x
  = \theta \int_{\Omega} \abs{\nabla Q_0}^2 \d x.
 \end{equation*}
 This implies that $\nabla (E_\eps Q_\eps)\chi_{\Omega_\eps} \to \nabla Q$ strongly in~$L^2(\Omega)$,
 where~$\chi_{\Omega_\eps}$ is the characteristic function of~$\Omega_\eps$.
 On the other hand, in the proof of Lemma~\ref{lem:extension}
 (see Eq.~\eqref{harm_extension}) we have shown that
 \[
  \norm{\nabla (E_\eps Q_\eps)}_{L^2(\mcP_\eps)} \lesssim 
  \norm{\nabla Q_\eps}_{L^2(\widetilde{\mcP_\eps}\setminus\mcP_\eps)} \! ,
 \]
 where $\widetilde{\mcP}_\eps := \cup_i (x^i_\eps + 2\eps^\alpha R^i_\eps \mcP)$.  
 The right-hand side tends to zero as~$\eps\to 0$, because  
 the sequence $(|\nabla (E_\eps Q_\eps)|^2\chi_{\Omega_\eps})_{\eps> 0}$ is strongly 
 compact in~$L^1$ (hence equi-integrable)
 and $|\widetilde{\mcP_\eps}\setminus\mcP_\eps|\to 0$.
 Therefore, we conclude that $\nabla (E_\eps Q_\eps)\to \nabla Q$
 strongly in~$L^2(\Omega)$, and the proposition follows.
\end{proof}

\begin{proof}[Proof of Theorem~\ref{th:global-min}]
 Take a map~$Q\in H^1_g(\Omega, \, \mcS_0)$. By 
 Proposition~\ref{prop:liminf} and Remark~\ref{remark:Gamma},
 we see that
 \[
  \limsup_{\eps\to 0} \inf_{H^1_g(\Omega_\eps, \, \mcS_0)} \mcF_\eps
  \leq \limsup_{\eps\to 0} \mcF_\eps [Q] = \mcF_0[Q].
 \]
 By applying Proposition~\ref{prop:coercivity} we deduce that,
 at least for~$\eps$ small enough, any minimising sequence
 for~$\mcF_\eps$ in~$H^1_g(\Omega, \mcF_\eps)$ is bounded in~$H^1$
 (uniformly with respect to $\eps$). Then, thanks to the lower semi-continuity 
 provided by Proposition~\ref{prop:lsc}, for~$\eps$ small enough the functional
 $\mcF_\eps$ admits a global minimiser in~$H^1_g(\Omega, \mcS_0)$.
 The theorem now follows by the same arguments used in the 
 proof of Proposition~\ref{prop:convergence}.
\end{proof}

\section{Applications to the Landau-de Gennes model}
\label{Sec:LDG}

In this section, we consider the Landau-de Gennes model 
for nematic liquid crystals. In this model, the elastic
energy density is given by
\[
 f_e^{LdG}(\nabla Q) := L_1 \, \partial_k Q_{ij} \, \partial_k Q_{ij}
 + L_2 \, \partial_j Q_{ij} \, \partial_k Q_{ik}
 + L_3 \, \partial_j Q_{ik} \, \partial_k Q_{ij}
\]
(Einstein's summation convention is assumed). We impose the inequalities
\begin{equation} \label{Longa}
 L_1 > 0, \qquad - L_1 < L_3 < 2L_1,
 \qquad -\frac{3}{5} L_1 - \frac{1}{10} L_3 < L_2, 
\end{equation}
which guarantee the strong convexity 
of~$f_e^{LdG}$~\cite{Longa}. The bulk energy
density is a quartic polynomial in the scalar invariants of~$Q$:
\begin{equation*} 
 f_b^{LdG}(Q) := a\,\tr(Q^2) - b\,\tr(Q^3) + c\left(\tr(Q^2)\right)^2 \!.
\end{equation*}
The positive coefficients~$b$, $c$ depend on the material
but not on the temperature. The coefficient~$a\in\R$ does depend 
on the temperature~$T$ and is given by
$a = a_*(T - T_*)$, where~$a_*$ is a material parameter
and $T_*$ is a characteristic temperature of the material
(it is the temperature where the isotropic state loses stability).

Suppose that we are given a nematic host with Landau-de Gennes
coefficients $(a, \, b, \, c)$. We aim to obtain a colloidal suspension
with pre-assigned effective Lan\-dau-de Gen\-nes coefficients 
$(a^\prime, \, b^\prime, \, c^\prime)$.
We assume that the inclusions are spherical, that is, 
$\mcP = \overline{B}_1$ and
\[
 \mcP_\eps = \bigcup_{i=1}^{N_\eps} 
 \overline{B}_{\eps^\alpha}(x^i_\eps).
\]
The centers of the inclusions, $x^i_\eps$, are chosen in such a way
that~\eqref{hp:Omega_eps} and~\eqref{hp:conv} are satisfied
(for instance, we may take the~$x^i_\eps$'s to be periodically distributed).
We wish to choose the surface elastic energy~$f_s$
in such a way that local minimisers of the Landau-de Gennes functional
\begin{equation} \label{Feps-Ldg}
 \begin{split}
 \mcF_\eps[Q] &= \int_{\Omega_\eps} \left(f_e^{LdG}(\nabla Q) 
 + a\,\tr(Q^2) - b\, \tr(Q^3) + c\left(\tr(Q^2)\right)^2 \right)\d x \\
 &\qquad\qquad\qquad + \eps^{3-2\alpha} 
 \int_{\partial\mcP_\eps} f_s(Q, \, \nu) \, \d\sigma
 \end{split}
\end{equation}
converge to local minimisers of the homogenised functional
\begin{equation} \label{Fzero-LdG}
 \mcF_0[Q] = \int_\Omega \left(f_e^{LdG}(\nabla Q) 
 + a^\prime \, \tr(Q^2) - b^\prime \, \tr(Q^3) +
 c^\prime \left(\tr(Q^2)\right)^2 \right) \d x.
\end{equation}
Our choice of~$f_s$ must be consistent with the physical 
symmetries of the system. We impose that $f_s$ is invariant 
with respect to orthogonal transformations, that is,
\begin{equation} \label{invariance-}
  f_s(UQU^{\mathsf{T}}, \, U\nu) = f_s(Q, \, \nu) \qquad 
  \textrm{for any } (Q, \, \nu)\in\mcS_0\times\mathbb{S}^2, \ U\in\O(3).
\end{equation}
This condition combines invariance with respect to rotations
(i.e. frame-in\-dif\-fer\-ence) and invariance with
respect to the orientation of the surface.

\begin{prop} \label{prop:surface_energy-}
 A function~$f_s\colon\mcS_0\times\mathbb{S}^2\to\R$ 
 satisfies~\eqref{invariance-} if and only if there exists a
 function~$\tilde{f}_s\colon\R^4\to\R$ such that
 \[
  f_s(Q, \, \nu) = \tilde{f}_s(\tr(Q^2), \, \tr(Q^3),
  \, \nu\cdot Q\nu, \, \nu\cdot Q^2\nu)
 \]
 for all~$(Q, \, \nu)\in\mcS_0\times\mathbb{S}^2$.
\end{prop}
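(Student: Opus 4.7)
The ``if'' direction is a direct verification: each of the quantities $\tr(Q^2)$, $\tr(Q^3)$, $\nu\cdot Q\nu$ and $\nu\cdot Q^2\nu$ is manifestly invariant under the joint action $(Q,\nu)\mapsto(UQU^{\mathsf T},U\nu)$ of $U\in\O(3)$, because traces are conjugation-invariant and $(U\nu)\cdot(UQU^{\mathsf T})^k(U\nu)=\nu\cdot Q^k\nu$ for $k=1,2$. Hence any function of these four quantities satisfies~\eqref{invariance-}.

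For the converse direction my plan is to show that the four invariants $I_1:=\tr(Q^2)$, $I_2:=\tr(Q^3)$, $I_3:=\nu\cdot Q\nu$, $I_4:=\nu\cdot Q^2\nu$ \emph{separate the $\O(3)$-orbits} on $\mcS_0\times\mathbb{S}^2$. Granted this, I would define $\tilde f_s$ on the image of the map $(Q,\nu)\mapsto(I_1,I_2,I_3,I_4)$ by $\tilde f_s(I_1,\ldots,I_4):=f_s(Q,\nu)$ for any representative of the corresponding orbit---well-defined thanks to orbit separation together with~\eqref{invariance-}---and then extend $\tilde f_s$ arbitrarily (say by zero) to the rest of~$\R^4$. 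To prove orbit separation, I would first diagonalise $Q$: by the spectral theorem there exists $U_0\in\O(3)$ with $U_0 Q U_0^{\mathsf T}=\operatorname{diag}(\lambda_1,\lambda_2,\lambda_3)$ and $\lambda_1+\lambda_2+\lambda_3=0$; after acting by $U_0$ we may assume $Q$ itself is diagonal, and set $\mu=(\mu_1,\mu_2,\mu_3):=U_0\nu$. Newton's identities together with $\tr Q=0$ show that $I_1$ and $I_2$ determine the elementary symmetric polynomials of the $\lambda_i$, and hence the multiset of eigenvalues of $Q$; thus two pairs with identical four invariants can be reduced, by independent orthogonal changes of basis, to diagonal $Q$'s with the same ordered eigenvalues, and it only remains to compare the stabiliser-orbits of the corresponding $\mu$.

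The analysis of this residual action splits according to the multiplicities of the $\lambda_i$. When the $\lambda_i$ are pairwise distinct, the stabiliser of $Q$ in $\O(3)$ is the diagonal subgroup $\{\pm1\}^3$ acting by sign flips, so its orbit on $\mu$ is characterised by the triple $(\mu_1^2,\mu_2^2,\mu_3^2)$; the identities $\sum\mu_i^2=1$, $\sum\lambda_i\mu_i^2=I_3$, $\sum\lambda_i^2\mu_i^2=I_4$ then form a Vandermonde linear system in the $\mu_i^2$ with non-zero determinant, uniquely determining the orbit from the invariants. When exactly two eigenvalues coincide, say $\lambda_1=\lambda_2\ne\lambda_3$, the stabiliser is $\O(2)\times\O(1)$ acting block-diagonally and the orbit of $\mu$ is characterised by $(\mu_1^2+\mu_2^2,\mu_3^2)$; the identity $I_3=\lambda_1+(\lambda_3-\lambda_1)\mu_3^2$ recovers $\mu_3^2$ because $\lambda_3\ne\lambda_1$, and then $\mu_1^2+\mu_2^2=1-\mu_3^2$. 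Finally, if all three eigenvalues coincide then $\tr Q=0$ forces $Q=0$, the stabiliser is all of $\O(3)$, and the orbit of $\nu$ is unique. The main obstacle is bookkeeping in the degenerate cases; the unifying algebraic ingredient is the non-vanishing of a Vandermonde-type determinant whenever the relevant eigenvalues are distinct, which guarantees that the four invariants pin down $\mu$ modulo the stabiliser at each step.
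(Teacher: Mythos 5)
Your proof is correct. It follows the same overall strategy as the paper --- reduce to a diagonal $Q$ via the spectral theorem, note that $\tr(Q^2)$, $\tr(Q^3)$ together with $\tr Q=0$ fix the eigenvalues, and then do a case analysis on eigenvalue multiplicities to produce an orthogonal map in the stabiliser of $Q$ carrying one unit vector to the other, which is exactly the content of the paper's Lemma~\ref{lem:CayleyHamilton} --- but the mechanism you use for the crucial separation step is different and somewhat more elementary. The paper first proves the slightly more general Proposition~\ref{prop:surface_energy} on $\mcS_0\times\R^3$ (with $|u|^2$ as a fifth invariant) and, inside the lemma, uses the Cayley--Hamilton theorem to upgrade equality of $u\cdot Q^ju$ for $j\in\{0,1,2\}$ to all powers $j$, then extracts the equalities $\bar u_i^2=\bar v_i^2$ (or the appropriate sums in the degenerate cases) by dividing by $\lambda_3^j$ and letting $j\to\infty$. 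You instead solve the $3\times 3$ linear system $\sum_i\mu_i^2=1$, $\sum_i\lambda_i\mu_i^2=\nu\cdot Q\nu$, $\sum_i\lambda_i^2\mu_i^2=\nu\cdot Q^2\nu$ directly, invoking the non-vanishing Vandermonde determinant when the eigenvalues are distinct, with simpler direct identities in the degenerate cases; this avoids both Cayley--Hamilton and the limiting argument, at the cost of nothing, and your argument would extend verbatim to the $\R^3$ setting by replacing $1$ with $|u|^2$ in the first equation. Your identification of the stabilisers ($\{\pm1\}^3$ for distinct eigenvalues, block $\O(2)\times\O(1)$ for a double eigenvalue, all of $\O(3)$ for $Q=0$) and the final well-definedness/extension step match the paper's conclusion of the argument.
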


We postpone the proof of this result to the Appendix~\ref{app:surface}.
We define the surface energy density
\begin{equation} \label{fs-LdG}
 \begin{split}
  f_s(Q, \, \nu) &= \frac{3}{4\pi}(a^\prime - a) (\nu\cdot Q^2\nu)
   + \frac{15}{8\pi}(b^\prime - b) (\nu\cdot Q\nu)(\nu\cdot Q^2\nu) \\
  & \qquad\qquad\qquad +\frac{15}{8\pi}(c^\prime - c) (\nu\cdot Q^2\nu)^2.
 \end{split}
\end{equation}
This choice is consistent with the physical invariance~\eqref{invariance-}.
An expression of this type has  been proposed by Sluckin 
and Poniewierski~\cite{SluckinPon}, based on an idea of Goossens~\cite{Goossens}
(see also~\cite[Eq.~(5)]{genpot}, \cite[Eq.~(9.b)]{Rey} and the references therein).

\begin{theorem} \label{th:conv-LdG}
 Let~$(a, \, b, \, c)$ and~$(a^\prime, \, b^\prime, \, c^\prime)$
 be two set of parameters with~$c>0$, $c^\prime>0$.
 Suppose that the inequalities~\eqref{Longa} are satisfied.
 Then, for any isolated local minimiser~$Q_0$ of the functional~$\mcF_0$
 defined by~\eqref{Fzero-LdG}, and for~$\eps>0$ small enough,
 there exists a local minimiser~$Q_\eps$ of the functional~$\mcF_\eps$,
 defined by~\eqref{Feps-Ldg}, such that $E_\eps Q_\eps\to Q_0$
 strongly in~$H^1(\Omega, \, \mcS_0)$.
\end{theorem}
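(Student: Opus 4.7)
The strategy is to verify that the choice of $f_s$ in~\eqref{fs-LdG} places us squarely within the scope of Theorem~\ref{th:loc-min}. There are essentially two things to check: (a) the Landau--de Gennes densities together with~\eqref{fs-LdG} satisfy the structural hypotheses~\eqref{hp:first}--\eqref{hp:last}; and (b) with this choice the homogenised correction $f_{hom}$ given by~\eqref{f_hom} is exactly $(a'-a)\tr(Q^2) - (b'-b)\tr(Q^3) + (c'-c)(\tr Q^2)^2$, so that the homogenised functional~\eqref{Eq:homLdG} agrees term by term with the target~\eqref{Fzero-LdG}. Once (a) and (b) are in hand, Theorem~\ref{th:loc-min} produces the required local minimisers $Q_\eps$ directly.

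Verification of (a) is routine bookkeeping. The geometric assumptions~\eqref{hp:alpha}, \eqref{hp:Omega_eps}, \eqref{hp:conv} are imposed as part of the set-up; since $\mcP = \overline{B}_1$ is $\SO(3)$-invariant the $R_\eps^i$'s play no role and one may take $R_* \equiv \Id$, so~\eqref{hp:R} holds trivially, while~\eqref{hp:P} is clear with $\phi = \Id$. The Longa inequalities~\eqref{Longa} are precisely what is needed to make $f_e^{LdG}$ strongly convex with two-sided quadratic control, hence~\eqref{hp:fe}; positivity of $c$ makes $f_b^{LdG}$ continuous, bounded below and of quartic (hence at most sextic) growth, hence~\eqref{hp:fb}; and~\eqref{fs-LdG} is a continuous polynomial of degree four in $Q$, so an elementary estimate on its $Q$-gradient gives the Lipschitz-type bound of~\eqref{hp:fs}.

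The substantive step is (b). Because $\partial\mcP = \mathbb{S}^2$ is $\SO(3)$-invariant, a change of variable in~\eqref{f_hom} makes the rotation $R_*(x)$ disappear, and $f_{hom}(Q, x) = \int_{\mathbb{S}^2} f_s(Q, \nu)\,\d\sigma(\nu)$ is then independent of $x$. I would evaluate each of the three terms of~\eqref{fs-LdG} using the standard moment identities
\[
 \int_{\mathbb{S}^2} \nu_i\nu_j \, \d\sigma = \tfrac{4\pi}{3}\delta_{ij}, \qquad
 \int_{\mathbb{S}^2} \nu_i\nu_j\nu_k\nu_l \, \d\sigma = \tfrac{4\pi}{15}\bigl(\delta_{ij}\delta_{kl} + \delta_{ik}\delta_{jl} + \delta_{il}\delta_{jk}\bigr),
\]
combined with $\tr Q = 0$ and the algebraic identity $\tr Q^4 = \tfrac{1}{2}(\tr Q^2)^2$, valid on $\mcS_0$ as a direct consequence of Cayley--Hamilton for a trace-free $3\times 3$ matrix. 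The three integrals come out to $\tfrac{4\pi}{3}\tr(Q^2)$, $\tfrac{8\pi}{15}\tr(Q^3)$ and $\tfrac{8\pi}{15}(\tr Q^2)^2$ respectively; the normalising constants in~\eqref{fs-LdG} are chosen precisely so that summation produces (up to signs) the target bulk correction, whence $f_b^{LdG} + f_{hom}$ is the Landau--de Gennes potential with coefficients $(a', b', c')$.

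No genuine technical obstacle is expected: the heavy lifting is done by Theorem~\ref{th:loc-min} and the only new ingredient is the explicit sphere integration. If anything, the delicate point is the identity $\tr Q^4 = \tfrac{1}{2}(\tr Q^2)^2$, which is exactly what allows a quartic correction of the form $(c'-c)(\tr Q^2)^2$ to be realised via a surface density that is itself quartic in $Q$; without it the third term of~\eqref{fs-LdG} would generate an independent $\tr Q^4$ contribution and a three-parameter Landau--de Gennes target could not be reached from a three-parameter family of Sluckin--Poniewierski-type surface densities.
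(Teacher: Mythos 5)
Your proposal is correct and takes essentially the same route as the paper: check \eqref{hp:first}--\eqref{hp:last} for the Landau--de Gennes densities, take $R_*\equiv\Id$ since the inclusions are spheres, evaluate $f_{hom}$ via the three sphere integrals (the paper does this in Lemma~\ref{lem:int} by diagonalising $Q$ and using spherical coordinates, equivalent to your moment identities combined with $\tr(Q^4)=\tfrac12(\tr(Q^2))^2$), and then invoke Theorem~\ref{th:loc-min}. The only caveat is the sign of the cubic term: with \eqref{fs-LdG} as written the integration yields $+(b'-b)\tr(Q^3)$ (exactly as stated in the paper's proof), so your claimed $-(b'-b)\tr(Q^3)$ is what the target \eqref{Fzero-LdG} requires rather than what \eqref{fs-LdG} literally produces --- a sign discrepancy already present in the paper itself, fixable by replacing $(b'-b)$ with $(b-b')$ in the middle coefficient of \eqref{fs-LdG}.
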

\begin{proof}
 This is a particular case of our main result,
 Theorem~\ref{th:loc-min}. Indeed, if~\eqref{Longa} holds
 and~$c>0$, $c^\prime>0$, then all the 
 conditions~\eqref{hp:first}--\eqref{hp:last} are satisfied. 
 All we need to do is to compute the homogenised potential, $f_{hom}$,
 defined by~\eqref{f_hom}. Since the inclusions are spheres, 
 we can take the rotation field~$R_*$ (see~\eqref{hp:R})
 to be the identity. Then, for any~$Q\in\mcS_0$ we have
 \[
  \begin{split}
   f_{hom}(Q) &= \int_{\partial B_1} f_s(Q, \, -\nu) \, \d\sigma(\nu)\\
   &= \frac{3}{4\pi}(a^\prime - a) \int_{\partial B_1}(\nu\cdot Q^2\nu) \,\d\nu
   + \frac{15}{8\pi}(b^\prime - b) \int_{\partial B_1}(\nu\cdot Q\nu)(\nu\cdot Q^2\nu)\,\d\nu\\
  & \qquad\qquad\qquad +\frac{15}{8\pi}(c^\prime - c) \int_{\partial B_1}(\nu\cdot Q^2\nu)^2\,\d\nu.
  \end{split}
 \]
 We claim that
 \begin{gather*}
   \int_{\partial B_1}(\nu\cdot Q^2\nu) \,\d\nu 
     = \frac{4\pi}{3}\tr(Q^2), \qquad
   \int_{\partial B_1}(\nu\cdot Q\nu)(\nu\cdot Q^2\nu)\,\d\nu
     = \frac{8\pi}{15}\tr(Q^3) \\
  \int_{\partial B_1}(\nu\cdot Q^2\nu)^2\,\d\nu = \frac{8\pi}{15} (\tr(Q^2))^2.
 \end{gather*}
 The proof of this claim is given in Lemma~\ref{lem:int}, in the appendix.
 Then, we obtain
 \[
  f_{hom}(Q) = (a^\prime - a)\,\tr(Q^2) + (b^\prime - b)\,\tr(Q^3)
  + (c^\prime - c)\,(\tr(Q^2))^2
 \]
 and the theorem follows.
\end{proof}

\begin{remark} An alternative choice of~$f_s$ is
 \begin{equation*}
 \begin{split}
  f_s(Q, \, \nu) &= \frac{15}{8\pi}(a^\prime - a) (\nu\cdot Q\nu)^2
   + \frac{15}{8\pi}(b^\prime - b) (\nu\cdot Q\nu)(\nu\cdot Q^2\nu) \\
  & \qquad\qquad\qquad +\frac{15}{8\pi}(c^\prime - c) (\nu\cdot Q^2\nu)^2.
 \end{split}
 \end{equation*}
 The same argument as above, combined with Lemma~\ref{lem:int} in the appendix, shows
 that Theorem~\ref{th:conv-LdG} holds for this choice of~$f_s$ also.
\end{remark}

\begin{remark}
 In case~$b^\prime = b$, $c^\prime = c$ and~$a^\prime > a$, 
 Theorem~\ref{th:conv-LdG} holds also if we take the Rapini-Papoular-type
 surface energy density defined by
 \begin{equation} \label{Rapini}
 \begin{split}
  f_s(Q, \, \nu) &:= \frac{1}{4\pi}(a^\prime - a) \, \tr(Q - Q_\nu)^2,
 \end{split}
 \end{equation}
 where~$Q_\nu := \nu\otimes\nu - \Id/3$. 
 This energy favours homeotropic anchoring at the boundary
 of the inclusions. We remark that, for any 
 constant~$Q\in\mcS_0$, there holds
 \begin{equation*}
 \begin{split}
  \int_{\partial B_1}\tr(Q - Q_\nu)^2 \, \d\sigma
  =  4\pi\,\tr(Q^2) - 2\tr\left(Q\int_{\partial B_1} Q_\nu \, \d\sigma\right)
  + \int_{\partial B_1} \tr(Q_\nu)^2 \, \d\sigma.
 \end{split}
 \end{equation*}
 The last term in the right-hand side
 integrates to a constant that does not depend on~$Q$, and hence,
 it can be dropped from the energy. The second term in the 
 right-hand side vanishes, because $\int_{\partial B_1}Q_\nu\,\d\sigma$ is, 
 by symmetry reasons, a multiple of the identity
 (see Lemma~\ref{lem:int}) and~$\tr Q=0$.
 Therefore, in case of spherical inclusions, a surface 
 anchoring energy density such as~\eqref{Rapini} 
 produces the same effect as an additional 
 term~$(a^\prime - a)\, \tr(Q^2)$ in the bulk energy density.
\end{remark}

\par\noindent{\bf Acknowledgement.}
The work of both authors is supported by the Basque Government through the BERC 2018-2021
program, by Spanish Ministry of Economy and Competitiveness MINECO through BCAM
Severo Ochoa excellence accreditation SEV-2017-0718 and through project MTM2017-82184-R
funded by (AEI/FEDER, UE) and acronym ``DESFLU''.
\par The authors would like to thank the Isaac Newton Institute for Mathematical Sciences for support and hospitality during the programme {\it ``The design of new materials programme"} when work on this paper was undertaken. This work was supported by: EPSRC grant numbers EP/K032208/1 and EP/R014604/1.
\begin{appendix}
\section{Technical results}

\subsection{Physical symmetries of the surface energy density}
\label{app:surface}

The aim of this section is to prove Proposition~\ref{prop:surface_energy-},
that is, to characterise the surface energies densities that
are invariant by orthogonal transformations. More precisely,
we will prove the following

\begin{prop} \label{prop:surface_energy}
 Let~$f\colon\mcS_0\times\R^3\to\R$ be a function that satisfies
 \begin{equation} \label{invariance}
  f(UQU^{\mathsf{T}}, \, Uu) = f(Q, \, u) \qquad 
  \textrm{for any } (Q, \, u)\in\mcS_0\times\R^3, \ U\in\O(3).
 \end{equation}
 Then, there exists a function~$\tilde{f}\colon\R^5\to\R$ such that
 \[
  f(Q, \, u) = \tilde{f}(\tr(Q^2), \, \tr(Q^3), \, 
  |u|^2, \, u\cdot Qu, \, u\cdot Q^2u)
 \]
 for all~$(Q, \, u)\in\mcS_0\times\R^3$.
\end{prop}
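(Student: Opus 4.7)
The \emph{if} direction is immediate: each of $\tr(Q^2)$, $\tr(Q^3)$, $|u|^2$, $u\cdot Qu$, $u\cdot Q^2u$ is unchanged under $(Q,u)\mapsto(UQU^{\mathsf T}, Uu)$ for $U\in\O(3)$, so any $\tilde f$ composed with these produces an invariant $f$. The substance is the converse, and the standard route is to show: if $(Q_1,u_1)$ and $(Q_2,u_2)\in\mcS_0\times\R^3$ share the values of all five invariants, then they lie in the same $\O(3)$-orbit. Assuming this, one defines $\tilde f$ on the image of the invariant map and extends arbitrarily; invariance of $f$ forces $f(Q,u)$ to depend only on the orbit, so this produces the desired factorisation.

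To establish the orbit statement, I would first diagonalise. By the spectral theorem there exist $U_1,U_2\in\SO(3)$ with $U_iQ_iU_i^{\mathsf T}=\mathrm{diag}(\lambda_1^{(i)},\lambda_2^{(i)},\lambda_3^{(i)})$. Since the multiset of eigenvalues is determined by the characteristic polynomial, and the traceless condition plus $\tr(Q^2),\tr(Q^3)$ fix the power sums $p_1,p_2,p_3$ and hence (Newton's identities) the elementary symmetric polynomials, the two multisets agree. Using permutation matrices in $\O(3)$, I may therefore reduce to the situation $Q_1=Q_2=:Q=\mathrm{diag}(\lambda_1,\lambda_2,\lambda_3)$ and it remains to find $U$ in the stabiliser $\mathrm{Stab}(Q)\subseteq\O(3)$ carrying $u_1$ to $u_2$, knowing only that $|u_1|^2=|u_2|^2$, $u_1\!\cdot\!Qu_1=u_2\!\cdot\!Qu_2$, $u_1\!\cdot\!Q^2u_1=u_2\!\cdot\!Q^2u_2$.

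Now I split on eigenvalue multiplicities. If the $\lambda_k$ are distinct, the three equalities above read as a linear system in the squared components $u_{1,k}^{\,2}-u_{2,k}^{\,2}$ with Vandermonde matrix $(\lambda_k^{\,j})_{j=0,1,2}$, which is invertible; hence $u_{1,k}^{\,2}=u_{2,k}^{\,2}$ for each $k$, and the diagonal sign-flip matrices $\mathrm{diag}(\pm 1,\pm 1,\pm 1)\in\mathrm{Stab}(Q)$ map $u_1$ to $u_2$. If exactly two eigenvalues coincide, say $\lambda_2=\lambda_3\neq\lambda_1$, then $\mathrm{Stab}(Q)$ contains $\mathrm{diag}(\pm 1)\oplus\O(2)$ acting on the $(u_2,u_3)$-plane, and the two independent invariants $|u|^2$ and $u\!\cdot\!Qu$ (the third gives no new information here) determine $u_1^{\,2}$ and $u_2^{\,2}+u_3^{\,2}$; the sign choice for the first coordinate together with an $\O(2)$ rotation in the degenerate plane then relates the vectors. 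Finally, if all eigenvalues coincide then $Q=0$ and the orbit of $u$ is determined by $|u|$ alone.

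The only genuinely delicate step is the second case, because one must verify that dropping one of the three equations does not lose information; this is ultimately the observation that, when $\lambda_2=\lambda_3$, the third invariant $u\!\cdot\!Q^2u$ is an affine combination of $|u|^2$ and $u\!\cdot\!Qu$, so it is automatically matched once the first two are. After these verifications the orbit statement holds, and Proposition~\ref{prop:surface_energy-} follows by restricting $u$ to $\mathbb{S}^2$, on which $|u|^2$ is constant and so only the four remaining scalars survive.
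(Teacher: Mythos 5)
Your proposal is correct, and it follows the paper's overall strategy (show that the five scalars classify the $\O(3)$-orbits of pairs $(Q,u)$, then define $\tilde f$ on the image of the invariant map), but the central step is proved by a genuinely different and more elementary argument. The paper's Lemma~\ref{lem:CayleyHamilton} fixes $Q$, uses the Cayley--Hamilton theorem to upgrade the three equalities $u\cdot Q^j u=v\cdot Q^j v$, $j=0,1,2$, to \emph{all} $j\in\N$, and then extracts the component-wise information $\bar u_k^2=\bar v_k^2$ by an asymptotic argument (dividing by $\lambda_3^j$ and letting $j\to+\infty$), with a case analysis organised by the \emph{absolute values} of the eigenvalues; this forces a separate treatment of the spectrum $\{0,\lambda,-\lambda\}$ (the paper's Case~4), where the limiting argument does not discriminate between $\lambda$ and $-\lambda$. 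You instead organise the cases by eigenvalue \emph{multiplicities} and solve the finite linear system directly: for distinct eigenvalues the $3\times 3$ Vandermonde matrix $(\lambda_k^j)_{j=0,1,2}$ is invertible, which yields $u_{1,k}^2=u_{2,k}^2$ at once and absorbs the paper's Cases~1 and~4 uniformly; when $\lambda_2=\lambda_3\neq\lambda_1$ the $2\times 2$ system in $u_1^2$ and $u_2^2+u_3^2$ has determinant $\lambda_2-\lambda_1\neq 0$, and your observation that $u\cdot Q^2u$ is then a linear combination of $|u|^2$ and $u\cdot Qu$ (Cayley--Hamilton for the quadratic minimal polynomial) correctly explains why the third invariant carries no extra information; the fully degenerate case gives $Q=0$ by tracelessness, exactly as in the paper. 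The stabiliser elements you exhibit (diagonal sign flips, respectively $\mathrm{diag}(\pm 1)\oplus\O(2)$ in the eigenbasis) match the paper's, and the reduction of two pairs $(Q_1,u_1)$, $(Q_2,u_2)$ to a common diagonal $Q$ via conjugation replaces, with the same effect, the paper's step $P=RQR^{\mathsf{T}}$ in the proof of the proposition. The net gain of your route is that it avoids the limit argument and the absolute-value case split; the paper's route, in exchange, produces the ``all powers $j$'' identity, which is a slightly stronger intermediate fact but is not needed elsewhere.
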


It is straightforward to check that the converse 
also holds, so Proposition~\ref{prop:surface_energy} implies
Proposition~\ref{prop:surface_energy-}.

\begin{lemma}\label{lem:CayleyHamilton}
 Let~$Q\in\mcS_0$ and~$u$, $v\in\R^3$ be such that
 \begin{equation} \label{sameinvariants}
  |u| = |v|,  \quad u\cdot Qu = v\cdot Qv, 
  \quad u\cdot Q^2u = v\cdot Q^2v.
 \end{equation}
 Then, there exists~$U\in\O(3)$ such that~$UQ=QU$ and~$v=Uu$.
\end{lemma}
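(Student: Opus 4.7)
The plan is to diagonalise $Q$ and rewrite everything in the eigenbasis, turning the three invariance conditions into scalar equations whose structure depends on the multiplicity pattern of the spectrum. By the spectral theorem there is an orthonormal basis $(e_1, e_2, e_3)$ of $\R^3$ and real numbers $\lambda_1, \lambda_2, \lambda_3$, summing to zero since $\tr Q = 0$, such that $Q e_i = \lambda_i e_i$. Writing $u = \sum_i u_i e_i$ and $v = \sum_i v_i e_i$, the hypotheses~\eqref{sameinvariants} become
\[
\sum_i u_i^2 = \sum_i v_i^2, \quad \sum_i \lambda_i u_i^2 = \sum_i \lambda_i v_i^2, \quad \sum_i \lambda_i^2 u_i^2 = \sum_i \lambda_i^2 v_i^2,
\]
i.e.\ a linear system in the quantities $u_i^2 - v_i^2$ whose matrix is a Vandermonde in the $\lambda_i$'s.

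I would then split into three cases according to the multiplicity of the eigenvalues of~$Q$. If the $\lambda_i$ are pairwise distinct, the Vandermonde is invertible and forces $u_i^2 = v_i^2$ for each $i$; one then takes $U$ to be diagonal in the basis $(e_i)$ with entries $\varepsilon_i \in \{-1, +1\}$ chosen so that $\varepsilon_i u_i = v_i$, which obviously commutes with $Q$ and maps $u$ to $v$. If exactly two eigenvalues coincide, say $\lambda_1 = \lambda_2 \neq \lambda_3$, the system reduces to $u_1^2 + u_2^2 = v_1^2 + v_2^2$ and $u_3^2 = v_3^2$; since $O(2)$ acts transitively on each sphere of the $2$-plane $\mathrm{span}(e_1, e_2)$, there exists $R \in O(2)$ mapping $(u_1, u_2)$ to $(v_1, v_2)$, and then $U = R \oplus \varepsilon_3$ with the appropriate sign $\varepsilon_3$ commutes with $Q$ (because $Q$ acts as the scalar $\lambda_1$ on $\mathrm{span}(e_1, e_2)$) and satisfies $Uu = v$. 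Finally, if all three eigenvalues coincide, tracelessness gives $Q = 0$, its centraliser is the whole of~$O(3)$, and transitivity of~$O(3)$ on the sphere of radius $|u| = |v|$ yields the desired $U$.

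The only real content is the Vandermonde argument in the simple-spectrum case, which I would justify in a single line, and the observation that in the degenerate cases the eigenspaces of~$Q$ of dimension~$\geq 2$ are isotropic for~$Q$, so any orthogonal transformation of such a subspace commutes with~$Q$. The main obstacle is purely organisational: making the case analysis crisp and ensuring, in the two-fold degenerate case, that the orthogonal block $R$ need only lie in $O(2)$ (not $SO(2)$), which is consistent with the statement requiring $U \in O(3)$ rather than $SO(3)$.
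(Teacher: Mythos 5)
Your argument is correct, and it takes a genuinely different route from the paper's at the key step. The paper (as the label of the lemma suggests) invokes the Cayley--Hamilton theorem to upgrade the three hypotheses to the infinite family of identities $\sum_i \lambda_i^j\,\bar u_i^2=\sum_i \lambda_i^j\,\bar v_i^2$ for all $j\in\N$, and then extracts $\bar u_i^2=\bar v_i^2$ (or the appropriate grouped sums) by an asymptotic argument, dividing by $\lambda_3^j$ and letting $j\to+\infty$; because what drives that limit is the size of $|\lambda_i|^j$, the paper's case analysis is organised by the \emph{absolute values} of the eigenvalues and needs an extra case ($\lambda_1=0$, $\lambda_2=-\lambda_3\neq 0$, handled with $j=1,2$ by hand) where the eigenvalues are distinct but the moduli are not. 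You instead read the three given conditions directly as a $3\times 3$ linear system in $u_i^2-v_i^2$ with Vandermonde matrix in the $\lambda_i$'s, so invertibility is governed by eigenvalue \emph{multiplicity} rather than modulus: the simple-spectrum case (including the paper's awkward $\{0,t,-t\}$ case) is settled in one line, the doubly degenerate case reduces to an obviously invertible $2\times 2$ system giving $u_3^2=v_3^2$ and $u_1^2+u_2^2=v_1^2+v_2^2$, and the fully degenerate case is $Q=0$. The construction of $U$ (signs, an $\O(2)$ block on the two-dimensional eigenspace, or all of $\O(3)$ when $Q=0$) is the same in both proofs, as is the implicit conjugation back from the eigenbasis. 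Your version is more elementary and shorter — no Cayley--Hamilton, no induction, no limiting argument, fewer cases — at no loss of generality; the only thing to make explicit when writing it up is the trivial subcase $(u_1,u_2)=(v_1,v_2)=0$ in the transitivity step, and the remark that conjugating the block matrix by the eigenbasis change preserves orthogonality, the commutation with $Q$, and the identity $Uu=v$.
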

\begin{proof}
 Let $\lambda_1$, $\lambda_2$, $\lambda_3$ denote the eigenvalues of~$Q$,
 ordered in such a way that $|\lambda_1|\leq |\lambda_2|\leq |\lambda_3|$.
 If two of the eigenvalues are equal to zero, then~$Q=0$ (because~$\tr Q=0$).
 On the other hand, the assumption $|u| = |v|$
 implies that we can write~$v = Uu$ for some~$U\in\SO(3)$, so if~$Q=0$
 there is nothing to prove.
 Therefore, we can assume that $\lambda_2\neq 0$, $\lambda_3\neq 0$.
 
 Let~$R\in\O(3)$ be such that $RQR^{\mathsf{T}} = \mathrm{diag}(\lambda_1, \, \lambda_2, \, \lambda_3)$.
 We define~$\bar{u}:=Ru$, $\bar{v}:=Rv$.
 By Cayley-Hamilton theorem, we know that $Q^3 = \alpha_1Q +\alpha_0\Id$
 for some numbers~$\alpha_0$, $\alpha_1$
 depending on the eigenvalues of~$Q$. Therefore, by induction, from
 the assumption~\eqref{sameinvariants} we deduce that
 \begin{equation*} 
  R^{\mathsf{T}}\bar{u}\cdot Q^jR^{\mathsf{T}}\bar{u} =
  R^{\mathsf{T}}\bar{v}\cdot Q^jR^{\mathsf{T}}\bar{v} \qquad \textrm{for any } j\in\N
 \end{equation*}
 or, equivalently,
 \begin{equation} \label{sameinvariants2}
  \lambda_1^j \, \bar{u}_1^2 + \lambda_2^j \, \bar{u}_2^2 + \lambda_3^j \, \bar{u}_3^2 = 
  \lambda_1^j \, \bar{v}_1^2 + \lambda_2^j \, \bar{v}_2^2 + \lambda_3^j \, \bar{v}_3^2
  \qquad \textrm{for any } j\in\N.
 \end{equation}
 Now, we distinguish four cases, depending on the eigenvalues of~$Q$.
 
 \medskip
 \noindent
 \textbf{Case~1:} $|\lambda_1| < |\lambda_2| < |\lambda_3|$. 
 In this case, we have $\lambda_1\neq 0$ because otherwise, due to the
 constraint $\tr Q=0$, we would have $\lambda_2 = -\lambda_3$. Let us
 divide both sides of~\eqref{sameinvariants2} by~$\lambda_3^j$ and let~$j\to+\infty$.
 We have $(\lambda_1/\lambda_3)^j\to 0$, $(\lambda_2/\lambda_3)^j\to 0$ 
 and so we conclude that~$\bar{u}_3^2 = \bar{v}_3^2$.
 This argument can be iterated to show that $\bar{u}_2^2 = \bar{v}_2^2$, $\bar{u}_1^2 = \bar{v}_1^2$.
 Therefore, we can write $\bar{v} = \bar{U}\bar{u}$ where~$\bar{U}$ is diagonal matrix that
 satisfies $\bar{U}_{ii}\in\{1, \, - 1\}$ for~$i\in\{1, \, 2, \, 3\}$
 (no summation on~$i$ is implied). Clearly, $\bar{U}\in\O(3)$ and~$\bar{U}$ commutes with~$RQR^{\mathsf{T}}$
 because both matrices are diagonal. Then, the matrix~$U := R^{\mathsf{T}}\bar{U}R$
 has all the desired properties.
 
 \medskip
 \noindent
 \textbf{Case~2:} $|\lambda_1| = |\lambda_2| < |\lambda_3|$.
 Also in this case, $\lambda_1\neq 0$ because we have assumed that $\lambda_2\neq 0$.
 If we had $\lambda_1 = -\lambda_2$ then we would get~$\lambda_3= 0$ because~$\tr Q=0$;
 but we assumed that $\lambda_3\neq 0$, so we must have $\lambda_1 = \lambda_2$.
 Now, as in Case~1, we divide both sides of~\eqref{sameinvariants2} by~$\lambda_3^j$
 and let~$j\to+\infty$, thus obtaining that $\bar{u}_3^2 = \bar{v}_3^2$
 and $\bar{u}_1^2 + \bar{u}_2^2 = \bar{v}_1^2 + \bar{v}_2^2$.
 Then, we can write $\bar{v} = \bar{U}\bar{u}$, where the matrix~$\bar{U}$ has the form
 \[
  \bar{U} = \left(\begin{matrix}
              S & 0     \\ 
              0 & \pm 1 \\
             \end{matrix}\right) \qquad 
             \textrm{for some } \ S\in\SO(2).
 \]
 We have $\bar{U}\in\O(3)$ and $\bar{U}$ commutes with
 $RQR^{\mathsf{T}}= \mathrm{diag}(\lambda_1, \, \lambda_2, \, \lambda_3)$
 because~$\lambda_1=\lambda_2$. Also in this case, the lemma is proved because 
 $U := R^{\mathsf{T}}\bar{U}R$ has all the desired properties.
 
 \medskip
 \noindent
 \textbf{Case~3:} $\lambda_2 = \lambda_3$ and~$|\lambda_1| < |\lambda_2|$.
 Again, the constraint $\tr Q = 0$ implies~$\lambda_1 = -2\lambda_2\neq 0$.
 By dividing both sides of~\eqref{sameinvariants2} by~$\lambda_3^j$
 and letting~$j\to+\infty$, we obtain that
 $\bar{u}_2^2 + \bar{u}_3^2 = \bar{v}_2^2 + \bar{v}_3^2$, then
 $\bar{u}_1^2 = \bar{v}_1^2$. We conclude the proof as in Case~2.
 
 \medskip
 \noindent
 \textbf{Case~4:} $\lambda_2 = -\lambda_3\neq 0$ and~$\lambda_1=0$.
 By writing~\eqref{sameinvariants2} with the choices~$j=1$, $j=2$, we obtain the system
 \[
  \begin{cases}
   \bar{u}_2^2 - \bar{u}_3^2 = \bar{v}_2^2 - \bar{v}_3^2 \\
   \bar{u}_2^2 + \bar{u}_3^2 = \bar{v}_2^2 + \bar{v}_3^2
  \end{cases}
 \]
 whence $\bar{u}_2^2 = \bar{v}_2^2$, $\bar{u}_3^2 = \bar{v}_3^2$.
 Then, using the assumption $|u| = |v|$, we also obtain that $\bar{u}_1^2 = \bar{v}_1^2$
 and we conclude the proof arguing as in Case~1.
 
 \medskip
 In principle, we still need to consider the case
 $\abs{\lambda_1}=\abs{\lambda_2}=\abs{\lambda_3}$.
 However, if we had $\abs{\lambda_1}=\abs{\lambda_2}=\abs{\lambda_3}$
 then the constraint $\tr Q = 0$ would imply $Q = 0$, so there is nothing left to prove.
\end{proof}

\begin{proof}[Proof of Proposition~\ref{prop:surface_energy}]
 Let~$(Q, \, u)$, $(P, \, v)\in\mcS_0\times\R^3$ be such that
 $\tr(Q^2) = \tr(P^2)$, $\tr(Q^3) = \tr(P^3)$ and $u\cdot Q^ju = v\cdot P^jv$
 for~$j\in\{0, \, 1, \, 2\}$. In particular, $Q$ and~$P$ have the same scalar
 invariants, hence the same eigenvalues, and we can write $P = RQR^{\mathsf{T}}$
 for some~$R\in\O(3)$. Then, we have
 \[
  u\cdot Q^ju = v\cdot RQ^jR^{\mathsf{T}} v  = R^{\mathsf{T}} v\cdot Q^jR^{\mathsf{T}} v 
  \qquad \textrm{for } j\in\{0, \, 1, \, 2\}.
 \]
 By applying Lemma~\ref{lem:CayleyHamilton}, we find~$U\in\O(3)$ such that
 $QU = UQ$ and~$R^{\mathsf{T}} v = Uu$. Therefore, there holds
 \[
  f(Q, \, u) \stackrel{\eqref{invariance}}{=} f(UQU^{\mathsf{T}}, \, Uu) = f(Q, \, R^{\mathsf{T}} v)
  \stackrel{\eqref{invariance}}{=} f(RQR^{\mathsf{T}}, \, v) = f(P, \, v).
 \]
 Thus, we can define unambigously a function~$\tilde{f}$ with the required property.
\end{proof}

\subsection{Integrated energy densities}

We will use the notation~$\abs{Q}^2 := \tr(Q^2)$.

\begin{lemma} \label{lem:int}
 For $Q\in\mcS_0$ we have:
 \begin{gather}
  \int_{\mathbb{S}^2} (\nu\cdot Q^2\nu) \, \d\nu 
     = \frac{4\pi}{3} |Q|^2 \label{tr2} \\
  \int_{\mathbb{S}^2} (\nu\cdot Q\nu) (\nu \cdot Q^2\nu) \, \d\nu 
     = \frac{8\pi}{15}\tr(Q^3) \label{tr3} \\
  \int_{\mathbb{S}^2} (\nu \cdot Q^2\nu)^2 \, \d\nu
     = \frac{8\pi}{15}|Q|^4 \label{tr4} \\
  \int_{\mathbb{S}^2} (\nu \cdot Q\nu)^2 \, \d\nu
     = \frac{8\pi}{15}|Q|^2. \label{tr2bis}
 \end{gather}
 More generally, let~$f\colon\mcS_0\times\R^3\to\R$ be a function that satisfies
 \begin{equation} \label{invariance+}
  f(UQU^{\mathsf{T}}, \, Uu) = f(Q, \, u) \qquad 
  \textrm{for any } (Q, \, u)\in\mcS_0\times\R^3, \ U\in\O(3).
 \end{equation}
 Then, there exists a function $h:\R^2\to\R$ such that
 \[
  \int_{\mathbb{S}^2}f(Q, \, \nu)\,\d\nu = h(|Q|^2, \, \tr(Q^3)).
 \]
\end{lemma}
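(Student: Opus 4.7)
The plan is to treat the four explicit identities \eqref{tr2}--\eqref{tr2bis} and the general invariance claim separately. For the explicit identities I would diagonalise $Q$ and reduce everything to monomial integrals on $\mathbb{S}^2$; for the general claim I would combine a change of variables on $\mathbb{S}^2$ with the classical fact that the ring of $O(3)$-invariant functions on $\mcS_0$ is generated by $\tr(Q^2)$ and $\tr(Q^3)$.

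For the four integrals, fix $Q\in\mcS_0$ and pick $R\in O(3)$ so that $RQR^{\mathsf{T}} = \mathrm{diag}(\lambda_1,\lambda_2,\lambda_3)$, with power sums $p_k := \sum_i \lambda_i^k$ and $p_1=0$. Since the surface measure on $\mathbb{S}^2$ is $O(3)$-invariant, the change of variable $\nu\mapsto R^{\mathsf{T}}\nu$ lets me assume $Q$ is diagonal, in which case $\nu\cdot Q^j\nu = \sum_i \lambda_i^j\nu_i^2$. I then invoke the standard monomial integrals
\[
 \int_{\mathbb{S}^2}\nu_i^2\,\d\nu = \frac{4\pi}{3}, \qquad
 \int_{\mathbb{S}^2}\nu_i^4\,\d\nu = \frac{4\pi}{5}, \qquad
 \int_{\mathbb{S}^2}\nu_i^2\nu_j^2\,\d\nu = \frac{4\pi}{15} \quad (i\neq j),
\]
and expand the resulting polynomials in the $\lambda_i$'s in terms of $p_2$ and $p_3$ using Newton's identities. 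The two non-trivial reductions are $\sum_{i\neq j}\lambda_i\lambda_j^2 = -p_3$ (from $p_1 = 0$), which yields \eqref{tr3}, and $p_4 = p_2^2/2$ (again from $p_1 = 0$, via $p_4 = -e_2 p_2$ and $e_2 = -p_2/2$), which yields \eqref{tr4}; the identities \eqref{tr2} and \eqref{tr2bis} are then immediate.

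For the general statement, I would define $F(Q) := \int_{\mathbb{S}^2} f(Q,\nu)\,\d\nu$ and show that $F$ is invariant under $O(3)$-conjugation of $Q$. For any $R\in O(3)$, the change of variable $\nu\mapsto R\nu$ on $\mathbb{S}^2$ together with \eqref{invariance+} gives
\[
 F(RQR^{\mathsf{T}}) = \int_{\mathbb{S}^2} f(RQR^{\mathsf{T}},\, R\nu)\,\d\nu
 = \int_{\mathbb{S}^2} f(Q,\nu)\,\d\nu = F(Q).
\]
Now two matrices $Q, P\in\mcS_0$ with $\tr(Q^2) = \tr(P^2)$ and $\tr(Q^3) = \tr(P^3)$ share the same characteristic polynomial (its coefficients are polynomials in $\tr Q$, $\tr Q^2$, $\tr Q^3$, and $\tr Q = \tr P = 0$), hence the same spectrum, hence are orthogonally conjugate; therefore $F(Q) = F(P)$, and the claimed function $h$ is obtained by pushforward through $Q\mapsto(\tr Q^2, \tr Q^3)$, extended arbitrarily off its image.

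The main obstacle is the Newton identity $p_4 = p_2^2/2$ behind \eqref{tr4}: without exploiting $p_1 = 0$, the integrand $(\nu\cdot Q^2\nu)^2$ would a priori generate a term proportional to $\tr(Q^4)$ rather than $|Q|^4$, which would break the clean quartic structure of $f_{hom}$ needed in Theorem~\ref{th:conv-LdG}. Everything else is routine bookkeeping with spherical integrals and elementary invariant theory.
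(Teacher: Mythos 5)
Your computation of the four explicit integrals follows the same route as the paper: diagonalise $Q$, use the $\O(3)$-invariance of the spherical measure, reduce to the monomial integrals $\int_{\mathbb{S}^2}\nu_i^2\,\d\nu = 4\pi/3$, $\int_{\mathbb{S}^2}\nu_i^4\,\d\nu = 4\pi/5$, $\int_{\mathbb{S}^2}\nu_i^2\nu_j^2\,\d\nu = 4\pi/15$, and exploit $\lambda_1+\lambda_2+\lambda_3=0$; your Newton-identity bookkeeping ($\sum_{i\neq j}\lambda_i\lambda_j^2=-\tr(Q^3)$ and $\tr(Q^4)=\tfrac12(\tr Q^2)^2$) is exactly the reduction the paper carries out by hand, and all constants check out. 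Where you genuinely diverge is the general statement: the paper first invokes Proposition~\ref{prop:surface_energy} to write $f(Q,\nu)=\tilde f(\tr Q^2,\tr Q^3,1,\nu\cdot Q\nu,\nu\cdot Q^2\nu)$ and then performs the change of variables inside the arguments of $\tilde f$, whereas you apply the hypothesis \eqref{invariance+} directly with $U=R$ together with the rotation invariance of $\d\nu$ to get $F(RQR^{\mathsf{T}})=F(Q)$, and then spell out the factorisation through $(\tr Q^2,\tr Q^3)$ via the standard fact that traceless symmetric matrices with equal second and third trace invariants are orthogonally conjugate. Your route is more elementary and self-contained (it does not need the representation result at all, which in the paper rests on the Cayley--Hamilton lemma), and it also makes explicit the final step of defining $h$ by pushforward, which the paper leaves implicit; the paper's route has the mild advantage of recycling machinery it has already established. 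Both arguments are correct.
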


\begin{proof}
For any $Q$ in $\mcS_0$ there exists a diagonal 
matrix $D=\textrm{diag}(\lambda_1,\lambda_2,\lambda_3)$
with $\lambda_1,\lambda_2,\lambda_3$ the eigenvalues of $Q$, hence 

\be
\lambda_1+\lambda_2+\lambda_3=0,
\ee and a rotation matrix $R\in SO(3)$ such that $RQR^{\mathsf{T}}=D$.

\begin{enumerate}
\item Proof of~\eqref{tr2}. There holds
\bea
\int_{\mathbb{S}^2} (\nu\cdot Q^2\nu) \, \d\nu
&=\int_{\mathbb{S}^2} (R^{\mathsf{T}}\nu\cdot Q^2R^{\mathsf{T}}\nu)\,\d\nu\non\\
&=\int_{\mathbb{S}^2} \nu\cdot RQ^2R^{\mathsf{T}}\nu\,\d\nu\non\\
&=\int_{\mathbb{S}^2}\nu\cdot \underbrace{RQR^{\mathsf{T}}}_{=D}\underbrace{RQR^{\mathsf{T}}}_{=D}\nu\,\d\nu\non\\
&=\int_{\mathbb{S}^2} (\lambda_1^2 x_1^2+\lambda_2^2 x_2^2+\lambda_3^2 x_3^2)\,\d x_1\,\d x_2\,\d x_3
\eea
By symmetry considerations we have:
$$
\int_{\mathbb{S}^2}x_1^2\,\d x_1\,\d x_2\,\d x_3
=\int_{\mathbb{S}^2}x_2^2\,\d x_1\,\d x_2\,\d x_3
=\int_{\mathbb{S}^2}x_3^2\,\d x_1\,\d x_2\,\d x_3
$$ 
and since $x_1^2+x_2^2+x_3^2=1$ we have 
$\int_{\mathbb{S}^2}x_1^2\,\d x_1\,\d x_2\,\d x_3
=\frac{1}{3}|\mathbb{S}^2|=\frac{4}{3}\pi$ and 
\be
\int_{\mathbb{S}^2} (\nu\cdot Q^2\nu) \, \d\nu=\frac{4\pi}{3} |Q|^2.
\ee

\item Proof of~\eqref{tr3}. Arguing as before we have:
\begin{align*}
&\int_{\mathbb{S}^2} (\nu\cdot Q\nu) (\nu\cdot Q^2\nu)\,\d\nu
= \int_{\mathbb{S}^2} (\lambda_1x_1^2+\lambda_2x_2^2+\lambda_3x_3^2)
(\lambda_1^2 x_1^2+\lambda_2^2 x_2^2+\lambda_3^2 x_3^2) \,\d x\non\\
&\quad = \int_{\mathbb{S}^2}(\lambda_1^3x_1^4+\lambda_2^3x_2^4+\lambda_3^3x_3^4) \,\d x \non\\
&\quad + \int_{\mathbb{S}^2}\lambda_1\lambda_2(\lambda_1+\lambda_2)x_1^2x_2^2
+ \lambda_1\lambda_3(\lambda_1+\lambda_3)x_1^2x_3^2 
+ \lambda_2\lambda_3(\lambda_2+\lambda_3)x_2^2x_3^2\,\d x\non\\
&\quad = c_{31}\tr(Q^3) + c_{32}\bigg(\lambda_1\lambda_2(\lambda_1+\lambda_2)
+ \lambda_1\lambda_3(\lambda_1+\lambda_3)
+ \lambda_2\lambda_3(\lambda_2+\lambda_3)\bigg)
\end{align*}
where we denote
$c_{31} := \int_{\mathbb{S}^2} x_1^4\,\d x = 
\int_{\mathbb{S}^2} x_2^4\,\d x = \int_{\mathbb{S}^2} x_3^4\,\d x$ and
$c_{32} := \int_{\mathbb{S}^2}x_1^2x_3^2\d x 
=\int_{\mathbb{S}^2}x_2^2x_3^2\,\d x
=\int_{\mathbb{S}^2}x_1^2x_2^2\,\d x$.
Taking into account that $\lambda_3=-\lambda_1-\lambda_2$ we have:
\[
 \begin{split}
  &\lambda_1\lambda_2(\lambda_1+\lambda_2) + \lambda_1\lambda_3(\lambda_1+\lambda_3) 
  + \lambda_2\lambda_3(\lambda_2+\lambda_3) \\
  &\qquad\qquad\qquad = 3\lambda_1\lambda_2(\lambda_1+\lambda_3)
  = -(\lambda_1^3+\lambda_2^3+\lambda_3^3) = -\tr(Q^3).
 \end{split}
\]
 The proportionality constant $c_{31}- c_{32}$
can be computed explicitely, using spherical coordinates. We have
\begin{gather*}
  c_{31} = \int_{\mathbb{S}^2} x_1^4 \, \d x 
  = \left(\int_0^\pi\sin^5\theta\,\d\theta\right)
  \left(\int_0^{2\pi}\cos^4\phi\,\d\phi\right) 
  = \frac{16}{15}\cdot\frac{3}{4}\pi = \frac{4}{5}\pi, \\
  c_{32} = \int_{\mathbb{S}^2} x_1^2x_2^2 \, \d x 
  = \left(\int_0^\pi\sin^5\theta\,\d\theta\right)
  \left(\int_0^{2\pi}\cos^2\phi\sin^2\phi\,\d\phi\right) 
  = \frac{16}{15}\cdot\frac{1}{4}\pi = \frac{4}{15}\pi,
\end{gather*}

and~\eqref{tr3} follows.

\item Proof of~\eqref{tr4}. Similarly as in part $1$ we get:
\bea
&\int_{\mathbb{S}^2} (\nu\cdot Q^2\nu)^2 \,\d\nu
=\int_{\mathbb{S}^2}(\lambda_1^2 x_1^2+\lambda_2^2 x_2^2+\lambda_3^2 x_3^2)^2\,\d x\non\\
&\qquad\qquad =\int_{\mathbb{S}^2} \lambda_1^4 x_1^4+\lambda_2^4x_2^4+\lambda_3^4x_3^4+2(\lambda_1^2\lambda_2^2x_1^2x_2^2+\lambda_1^2\lambda_3^2x_1^2x_3^2
+\lambda_2^2\lambda_3^2x_2^2x_3^2)\d x\non\\
&\qquad\qquad =(\lambda_1^4+\lambda_2^4+\lambda_3^4)\underbrace{\int_{\mathbb{S}^2}x_1^4\,\d x}_{=c_{31}}+2(\lambda_1^2\lambda_2^2+\lambda_1^2\lambda_3^2+\lambda_2^2\lambda_3^2)\underbrace{\int_{\mathbb{S}^2} x_1^2x_2^2\d x}_{=c_{32}}
\eea
On the other hand, taking into account that $\lambda_1+\lambda_2+\lambda_3=0$ one can check through a straightforward calculation that 
$$
\textrm{tr}(Q^4)=\lambda_1^4+\lambda_2^4+\lambda_3^4=\frac{1}{2}(\lambda_1^2+\lambda_2^2+\lambda_3^2)^2=\frac{1}{2}(|Q|^2)^2
$$ 
which in particular implies:
$$
\lambda_1^4+\lambda_2^4+\lambda_3^4=2(\lambda_1^2\lambda_2^2+\lambda_1^2\lambda_3^2+\lambda_2^2\lambda_3^2)
$$
and thus, taking into account the previous calculations:
$$
\int_{\mathbb{S}^2} (\nu\cdot Q^2\nu)^2 \, \d\nu
=(\lambda_1^4+\lambda_2^4+\lambda_3^4)(c_{31}+c_{32})
=\frac{c_{31}+c_{32}}{2}|Q|^4.
$$

\item Proof of~\eqref{tr2bis}. The proof is completely 
analougous to that of~\eqref{tr4}. We have
\bea
\int_{\mathbb{S}^2} (\nu\cdot Q\nu)^2 \,\d\nu
&=\int_{\mathbb{S}^2}(\lambda_1 x_1^2+\lambda_2 x_2^2+\lambda_3 x_3^2)^2\,\d x\non\\
&=(\lambda_1^2+\lambda_2^2+\lambda_3^2)\underbrace{\int_{\mathbb{S}^2}x_1^4\,\d x}_{=c_{31}}+2(\lambda_1\lambda_2+\lambda_1\lambda_3+\lambda_2\lambda_3)\underbrace{\int_{\mathbb{S}^2} x_1^2x_2^2\d x}_{=c_{32}} \non
\eea
and, due to $\lambda_1+\lambda_2+\lambda_3=0$, we obtain
\[
\abs{Q}^2=\lambda_1^2+\lambda_2^2+\lambda_3^2
= - 2(\lambda_1\lambda_2+\lambda_1\lambda_3+\lambda_2\lambda_3),
\]
so that
$$
\int_{\mathbb{S}^2} (\nu\cdot Q\nu)^2 \, \d\nu
=(c_{31}-c_{32})|Q|^2.
$$
\end{enumerate}

\bigskip 
Finally we consider a function $f\colon\mcS_0\times\R^3\to\R$ satisfying the invariance condition~\eqref{invariance+}. Then, from Proposition~\ref{prop:surface_energy} we know that there exists $\tilde f:\R^5\to \R$ such that 
 \[
  f(Q, \, u) = \tilde{f}(\tr Q^2, \, \tr Q^3, \, |u|^2, \, u\cdot Qu, \, u\cdot Q^2u)
 \]
 for all~$Q\in\mcS_0$ and~$u\in\R^3$. 
 
 In order to prove the existence of the claimed $h$ function it suffices to show that 
 
\be  \int_{\mathbb{S}^2}f(Q,\nu)\,\d\nu= \int_{\mathbb{S}^2}f(RQR^{\mathsf{T}},\nu)\,\d\nu
\ee for any $R\in SO(3)$. To this end we use the function $\tilde f$ above and note that we have:

\bea
 \int_{\mathbb{S}^2}f(Q,\nu)\,\d\nu&=\int_{\mathbb{S}^2}\tilde{f}(\tr Q^2, \, \tr Q^3, \, 1, \, \nu\cdot Q\nu, \, \nu\cdot Q^2\nu )\d\nu\non\\
 &=\int_{\mathbb{S}^2}\tilde{f}(\tr Q^2, \, \tr Q^3, \, 1, \, R^{\mathsf{T}}\nu\cdot QR^{\mathsf{T}}\nu, \, R^{\mathsf{T}}\nu\cdot Q^2R^{\mathsf{T}}\nu)\d\nu\non\\
 &=\int_{\mathbb{S}^2}\tilde{f}(\tr Q^2, \, \tr Q^3, \, 1, \, \nu\cdot RQR^{\mathsf{T}}\nu, \, \nu\cdot RQ^2R^{\mathsf{T}}\nu)\d\nu\non\\
 &=\int_{\mathbb{S}^2}\tilde{f}(\tr (RQR^{\mathsf{T}})^2, \, \tr (RQR^{\mathsf{T}})^3, \, 1, \, \nu\cdot RQR^{\mathsf{T}}\nu, \, \nu\cdot (RQR^{\mathsf{T}})^2\nu)\d\nu\non\\
 &= \int_{\mathbb{S}^2}f(RQR^{\mathsf{T}},\nu)\,\d\nu
\eea
\end{proof}
\end{appendix}


\end{document}